\newtheorem{thm}{Theorem}[section]
\newtheorem{cor}[thm]{Corollary}
\newtheorem{lem}[thm]{Lemma}
\newtheorem{prop}[thm]{Proposition}
\theoremstyle{definition}
\theoremstyle{remark}
\newtheorem{rem}[thm]{Remark}
\numberwithin{equation}{section}
\let\alb\allowbreak
\def\lsym#1{#1\alb\ldots\relax#1\alb} \def\lc{\lsym,}
\let\geq\geqslant
\let\leq\leqslant
\newcommand{\nc}{\newcommand}
\nc{\mc}{\mathcal}
\nc{\on}{\operatorname}
\nc{\Z}{{\mathbb Z}}
\nc{\C}{{\mathbb C}}
\nc{\pa}{\partial}
\nc{\arr}{\rightarrow}
\nc{\larr}{\longrightarrow}
\nc{\al}{\alpha}
\nc{\la}{\lambda}
\nc{\ep}{\epsilon}
\nc{\g}{{\mathfrak g}}
\nc{\h}{{\mathfrak h}}
\nc{\n}{{\mathfrak n}}
\nc{\Ga}{\Gamma}
\nc{\La}{\Lambda}
\nc{\si}{\sigma}
\nc{\bi}{\bibitem}
\nc{\om}{\omega}
\nc{\Res}{\on{Res}}
\nc{\ga}{\gamma}
\nc{\tr}{\on{tr}}
\nc{\bla}{\bold\lambda}
\nc{\gln}{{\mathfrak g\mathfrak l}_N}
\nc{\sln}{{\mathfrak s\mathfrak l}_N}
\nc{\V}{{\mc V}}
\newcommand{\beq}{\begin{equation}}
\newcommand{\eeq}{\end{equation}}
\newcommand{\bean}{\begin{eqnarray}}
\newcommand{\be}{\begin{equation*}}
\newcommand{\ee}{\end{equation*}}
\newcommand{\eean}{\end{eqnarray}}
\newcommand{\bea}{\begin{eqnarray*}}
\newcommand{\eea}{\end{eqnarray*}}
\newcommand{\bs}{\boldsymbol}
\newcommand{\Ref}[1]{{$($\ref{#1}$)$}}
\begin{document}

\title[Bethe algebra, Calogero-Moser space and Cherednik algebra]
{Bethe algebra of Gaudin model, Calogero-Moser space and Cherednik algebra}

\author[E. Mukhin, V. Tarasov, and A. Varchenko]
{E. Mukhin$\>^*$, V. Tarasov$\>^\star$, \and A. Varchenko$\>^\diamond$}

{\let\thefootnote\relax
\footnotetext{\vskip-.8pt\noindent
$^*$\,Supported in part by NSF grant DMS-0601005\\
{}$\quad$ $^\star$\,Supported in part by RFFI grant 08-01-00638\\
$^\diamond$\,Supported in part bynk NSF grant DMS-0555327}}

\maketitle

\begin{abstract}
We identify the Bethe algebra of the Gaudin model associated to
$\gln$ acting on a suitable representation with the center of the
rational Cherednik algebra and with the algebra of regular functions
on the Calogero-Moser space.
\end{abstract}

\maketitle

\section{Introduction}
The Bethe algebra of the Gaudin model associated to $\gln$ is a
remarkable commutative subalgebra of the universal enveloping algebra
of the current algebra of $\gln$. It is also known by the names of the
algebra of higher Gaudin Hamiltonians, see \cite{FFR}, or the algebra of
higher transfer matrices, see \cite{transfer}, or the quantum shift of argument
subalgebra, see \cite{FFRb}.

The Bethe algebra acts on a subspace $M$ of a given $\gln$-weight of
any $\gln[t]$-module producing a commutative family of linear
operators $\mc B(M)\in\on{End} M$. The main problem of the Gaudin
model is to describe common eigenvectors and eigenvalues of this
family.

It often turns out that the Bethe algebra $\mc B(M)$ can be naturally
identified with the algebra of regular functions $\mc O_{\mc X}$ on an
affine variety $\mc X$, and $M$ becomes the regular representation of
$\mc O_{\mc X}$. Then the common eigenvectors of $\mc B(M)$ are in a
bijective correspondence with the points of $\mc X$, the joint
spectrum of $\mc B(M)$ is simple and $\mc B(M)$ is a maximal
commutative subalgebra of $\on{End} M$. One can also hope to get new
information about the variety $\mc X$ by studying the algebra $\mc
B(M)$.

Such an idea was realized in \cite{transversal}, \cite{exp}, where $M$
is a subspace of a given weight of an arbitrary finite-dimensional
irreducible $\gln[t]$-module. Then $\mc B(M)$ is a finite-dimensional
algebra and $\mc X$ is the scheme-theoretic intersection of a suitable
Schubert varieties in a Grassmannian of $N$ planes. Besides the new
information on the spectrum of the Gaudin model, on the
algebro-geometric side this study gave a proof of the B. and M.
Shapiro conjecture, see \cite{shapiro}, a proof of the transversality
conjecture discussed in \cite{S}, and an effective proof of the reality of
the Schubert calculus.

In this paper, we consider the action of $\gln[t]$ on polynomials in
several variables with values in a tensor product of vector
representations of $\gln$, which is defined via the evaluation map,
and the subspace $M$ of polynomials with values in the zero
$sl_N$-weight subspace of the tensor product. Thus, in contrast to
\cite{transversal}, \cite{exp}, we keep the evaluation parameters and
the parameters in the Bethe algebra formal variables, which makes $M$
an infinite-dimensional module. In this case, we show that the
corresponding affine variety $\mc X$ is the Calogero-Moser space. The
Calogero-Moser is a celebrated affine non-singular variety, which
appears in many areas of mathematics, see \cite{KKS}, \cite{Wn},
\cite{EG}, \cite{CBH}.

We prove two main theorems. First, we show that $\mc B(M)$ is naturally
isomorphic to the center of the rational Cherednik algebra of type
$A$, see Theorem \ref{Z=B}. Second, we show that $\mc B(M)$ is
naturally isomorphic to the algebra $\mc O_{\mc X}$ of regular
functions on the Calogero-Moser space, see Theorem \ref{C=B}.

The Bethe algebra $\mc B(M)$ is generated by coefficients of a row
determinant of some matrix, see \cite{CT}, \cite{transfer}. We show that
the center of the Cherednik algebra is generated by coefficients of an
explicit determinant-like formula and that the algebra $\mc O_{\mc X}$
of regular functions on the Calogero-Moser space is generated by the
coefficients of the polynomial version of the Wilson $\Psi$ function.
The isomorphisms in Theorems \ref{Z=B} and \ref{C=B} just send the
corresponding coefficients to each other. The proof of Theorem
\ref{Z=B} is a simple algebraic argument. The proof of Theorem
\ref{C=B} follows the logic of \cite{transversal} and it is more
involved. In particular, we use the machinery of the Bethe ansatz and
the Wilson correspondence of the Calogero-Moser space to the adelic
Grassmannian.

It is proved in \cite{EG} that the center of the Cherednik algebra of
type $A$ is isomorphic to the algebra of regular functions on the
Calogero-Moser space. We recover this result.

The paper is organized as follows. We start with identifying the Bethe
algebra with the center of the Cherednik algebra in Section \ref{sec
Z=B}. We discuss this result in Section \ref{sec cor}. In
particular, we give an explanation of Theorem \ref{Z=B} using a
general construction of a commutative subalgebra from the center of an
algebra, see Sections \ref{comm sec} and \ref{another sec}. In
Section \ref{sec C=B} we describe the map between the Bethe algebra
and the algebra of regular functions on the Calogero-Moser space. We
give the proof that this map is a well-defined isomorphism of algebras
in Section \ref{proof C=B}. Some corollaries of this isomorphism are
given in Section \ref{sec cor 2}. In particular, Section \ref{sec
bij} describes the bijections between eigenvectors of the Bethe
algebra $\mc B(M)$ and three sets which are known to be equivalent:
points of the Calogero-Moser space, the points of the
adelic Grassmannian and the set of irreducible
representations of the Cherednik algebra.
These bijections follow from Theorems \ref{Z=B}
and \ref{C=B}.

We thank E. Vasserot and P. Etingof for useful discussions.

\section{Bethe algebra of the Gaudin model and the center of the
Cherednik algebra}\label{sec Z=B}
\subsection{Multi-symmetric polynomials}
Let $\C[\bs z,\bs \la]=\C[z_1\lc z_N,\la_1\lc\la_N]$ be the
algebra of polynomials in commuting variables.

Let $S_N$ be the group of permutations of $N$ elements. We
often consider actions of the group $S_N$ which permute indices in the
groups of $N$ variables, in such cases we will indicate the affected
group of variables by the upper indices. For example, $S_N^z$ permutes
the variables $z_1\lc z_N$ and no other variables, $S_N^{z,\la}$
permutes the variables $z_1\lc z_N$ and the variables
$\la_1\lc\la_N$, etc. We use the same notation for the elements of
$S_N$. For example, for $\si,\tau\in S_N$,

\be
(\si^z\tau^\la) \bigl(p(z_1\lc z_N,\la_1\lc\la_N))=
p(z_{\si(1)}\lc z_{\si(N)}, \la_{\tau(1)}\lc\la_{\tau(N)}).
\ee
We also have $\si^z\tau^\la=\tau^\la\si^z$ and
$\si^z\si^\la=\si^{z,\la}$.

Let $P_N=\C[\bs z,\bs \la]^{S_N^{z,\la}}\subset \C[\bs z,\bs \la]$ be
the algebra of polynomials invariant with respect to simultaneous
permutations of $z_1,\dots, z_N$ and $\la_1,\dots,\la_N$.
We call $P_N$ the {\it algebra of
multi-symmetric polynomials}. The algebra $P_N$ is also known by the
names of MacMahon polynomials, vector symmetric polynomials,
diagonally symmetric polynomials, etc, it is well-studied, see for
example \cite{W}.

Consider the algebras $\C[\bs z]^{S_N^z}$ and $\C[\bs
\la]^{S_N^{\la}}$ of symmetric polynomials in $\bs z$ and $\bs \la$
respectively. We have an obvious inclusion $\C[\bs z]^{S_N^z}\otimes
\C[\bs \la]^{S_N^\la} \to P_N$ given by the multiplication map. The
following lemma is a standard fact.

\begin{lem}
The algebra $P_N$ is a free $\C[\bs z]^{S_N^z}\otimes \C[\bs
\la]^{S_N^\la}$-module of rank $N!$.
\qed
\end{lem}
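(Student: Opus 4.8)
The plan is to exhibit an explicit $\C[\bs z]^{S_N^z}\otimes\C[\bs\la]^{S_N^\la}$-basis of $P_N$ of size $N!$, rather than just counting ranks. First I would reduce to a graded statement: all three algebras are graded (by total degree, or bigraded by the degrees in $\bs z$ and in $\bs\la$ separately), the inclusion $A:=\C[\bs z]^{S_N^z}\otimes\C[\bs\la]^{S_N^\la}\to P_N$ is a map of graded algebras, and $A$ is a polynomial ring, hence Noetherian; so it suffices to prove $P_N$ is a finitely generated $A$-module which is free, and then compute the rank. Freeness over a polynomial ring is most cleanly obtained via the following route: show that $P_N$ is Cohen--Macaulay and that $A$ is a homogeneous system of parameters inside it, i.e. $P_N$ is a finite module over $A$; then a graded Cohen--Macaulay module that is finite over a graded polynomial subring over which it is torsion-free of the appropriate dimension is automatically free (the Auslander--Buchsbaum / "miracle flatness" argument). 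Since $\C[\bs z,\bs\la]$ is a polynomial ring and $S_N^{z,\la}$ is a finite group acting on it, $P_N=\C[\bs z,\bs\la]^{S_N^{z,\la}}$ is Cohen--Macaulay by the Hochster--Eagon theorem, so this hypothesis is for free.

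Next I would verify that $A$ is indeed a homogeneous system of parameters in $P_N$, equivalently that $P_N$ is module-finite over $A$. The generators of $A$ are the power sums $p_k(\bs z)=\sum_i z_i^k$ and $p_k(\bs\la)=\sum_i\la_i^k$ for $1\le k\le N$; I claim $P_N$, and in fact all of $\C[\bs z,\bs\la]^{S_N}$, is integral over the subring they generate. This is a standard finiteness fact for invariants: $\C[\bs z,\bs\la]$ is finite over $\C[\bs z]^{S_N^z}\otimes\C[\bs\la]^{S_N^\la}$ (each $z_i$ and each $\la_j$ satisfies its own monic degree-$N$ equation with symmetric coefficients), and $P_N$ is a submodule of a Noetherian module, hence finite. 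So $\dim P_N=\dim A=2N$ and $A$ is an h.s.o.p.; combined with Cohen--Macaulayness this gives freeness.

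Finally, the rank computation. Being free and graded, the rank of $P_N$ over $A$ equals $\dim_\C\bigl(P_N\otimes_A\C\bigr)=\dim_\C\bigl(P_N/\mathfrak m_A P_N\bigr)$ where $\mathfrak m_A$ is the irrelevant ideal of $A$; equivalently it equals the value at $1$ of the Hilbert series quotient $H_{P_N}(t)/H_A(t)$. Here $H_A(t)=\bigl(\prod_{k=1}^N(1-t^k)\bigr)^{-2}$, and by Molien's formula $H_{P_N}(t)=\frac1{N!}\sum_{\si\in S_N}\det(1-t\,\si|_{\C^N})^{-2}$, so
\be
\on{rank}_A P_N\;=\;\lim_{t\to1}\;\frac{\prod_{k=1}^N(1-t^k)^2}{N!}\sum_{\si\in S_N}\frac1{\det(1-t\,\si)^2}\;=\;N!\,,
\ee
since only $\si=1$ contributes in the limit (its term is $(1-t)^{-2N}$, matching the order of the pole of $1/H_A$, while every other $\si$ has fewer than $N$ unit eigenvalues on $\C^N\oplus\C^N$... — more precisely on each $\C^N$ summand, so its pole order is strictly smaller and it is killed). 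Alternatively one avoids Molien entirely: $P_N/\mathfrak m_A P_N$ is the coinvariant-type algebra $\bigl(\C[\bs z,\bs\la]/(\C[\bs z]^{S_N^z}_+\otimes 1,\,1\otimes\C[\bs\la]^{S_N^\la}_+)\bigr)^{S_N}$, and $\C[\bs z,\bs\la]/(\dots)\cong \C[S_N]^{\otimes 2}$-type object of dimension $(N!)^2$ carrying the $S_N$-action that is the tensor square of the regular representation, whose invariants have dimension $\frac1{N!}\sum_\si (\text{fix }\si)^2$... again $=N!$. I expect the main obstacle to be none of the homological input (which is classical) but rather pinning down this rank cleanly: one must be careful that passing to $P_N/\mathfrak m_A P_N$ really computes the free rank, which is exactly where graded Nakayama plus freeness are used, and one should double-check the two degenerations $z_i=\la_i$ do not create extra coincidences — they do not, because the $S_N$-action on the double coinvariant algebra is the diagonal one on $\C[S_N]\otimes\C[S_N]$, whose invariants, by Burnside, count pairs of cosets fixed simultaneously, giving $N!$.
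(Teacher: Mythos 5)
The paper offers no proof of this lemma at all --- it is stated as ``a standard fact'' and closed with an immediate \textup{\qedsymbol} --- so there is no argument of the authors' to compare against; what matters is whether your proof stands on its own, and it does. The skeleton is sound: $P_N=\C[\bs z,\bs\la]^{S_N^{z,\la}}$ is Cohen--Macaulay by Hochster--Eagon, it is module-finite over the polynomial ring $A=\C[\bs z]^{S_N^z}\otimes\C[\bs\la]^{S_N^\la}$ because each $z_i$ and $\la_j$ is integral over $A$ and $A$ is Noetherian, and then $\on{depth}_A P_N=\dim A=2N$ forces $\on{pd}_A P_N=0$ by Auslander--Buchsbaum, i.e.\ freeness (the ``torsion-free'' clause you mention is not needed). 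Both of your rank computations are correct and agree: in the Molien limit only $\si=\on{id}$ survives because any other $\si$ has $c(\si)<N$ cycles, hence a pole of order $2c(\si)<2N$ at $t=1$; and in the fibre computation $P_N/\mathfrak m_AP_N\cong\bigl(\C[S_N]\otimes\C[S_N]\bigr)^{S_N}$ has dimension $\frac1{N!}\sum_\si|\on{Fix}(\si)|^2=N!$ by Burnside. The one step you gloss over in the second route is the identification $P_N/\mathfrak m_AP_N\cong(\C[\bs z,\bs\la]/\mathfrak m_A\C[\bs z,\bs\la])^{S_N}$, which needs exactness of invariants (the Reynolds operator) to see that $(\mathfrak m_A\C[\bs z,\bs\la])\cap P_N=\mathfrak m_AP_N$; in characteristic zero this is immediate, but it deserves a sentence. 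A slightly more economical packaging of the same ideas, bypassing Hochster--Eagon: Chevalley's theorem gives $\C[\bs z]\cong\C[\bs z]^{S_N^z}\otimes H_z$ as graded $S_N$-modules with $H_z\cong\C[S_N]$ the coinvariant algebra, whence $\C[\bs z,\bs\la]\cong A\otimes H_z\otimes H_\la$ and $P_N\cong A\otimes(H_z\otimes H_\la)^{S_N}$ is visibly free of rank $N!$.
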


Consider the wreath product $\C[\bs z,\bs \la]\ltimes \C S_N^{z,\la}$.
We write the elements of $\C[\bs z,\bs \la]\ltimes \C S_N^{z,\la}$ in
the form $\sum_{\si\in S_N} p_\si(\bs z,\bs \la)\si$, where $p_\si(\bs
z,\bs\la)\in\C[\bs z,\bs\la]$. Such an element is zero if and only if
all $p_\si=0$. The algebra $\C[\bs z,\bs \la]$ is embedded in $\C[\bs
z,\bs \la]\ltimes \C S_N^{z,\la}$ by the map $p(\bs z,\bs\la)\mapsto
p(\bs z,\bs\la)\ id$.

The following is another standard fact.
\begin{lem}\label{smash center}
The algebra $P_N \cdot id$ is the center of $\C[\bs z,\bs
\la]\ltimes \C S_N^{z,\la}$. \qed
\end{lem}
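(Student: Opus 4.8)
The plan is to prove the two inclusions separately; both are elementary. First I would check that $P_N\cdot id$ is central. For $p\in P_N$ and an arbitrary element $\sum_{\tau\in S_N}q_\tau\tau$, the smash-product relation $\tau f=\tau(f)\,\tau$ (for $f\in\C[\bs z,\bs\la]$) gives $(p\cdot id)\bigl(\sum_\tau q_\tau\tau\bigr)=\sum_\tau pq_\tau\,\tau$ and $\bigl(\sum_\tau q_\tau\tau\bigr)(p\cdot id)=\sum_\tau q_\tau\,\tau(p)\,\tau$, and these agree since $\tau(p)=p$ for every $\tau\in S_N^{z,\la}$ by the $S_N^{z,\la}$-invariance of $p$.

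For the reverse inclusion, let $x=\sum_{\si\in S_N}p_\si(\bs z,\bs\la)\,\si$ be central. The main step is to exploit centrality against the copy of $\C[\bs z,\bs\la]$ sitting inside the smash product: for every $f\in\C[\bs z,\bs\la]$ one has $xf=\sum_\si p_\si\,\si(f)\,\si$ and $fx=\sum_\si fp_\si\,\si$, so, using that an element of the smash product is zero iff all its coefficients are zero, the equality $xf=fx$ forces $p_\si\,\bigl(f-\si(f)\bigr)=0$ for all $\si$ and all $f$. If $\si\ne id$, choose an index $i$ with $\si(i)\ne i$ and take $f=z_i$; then $p_\si\,(z_i-z_{\si(i)})=0$, and since $\C[\bs z,\bs\la]$ is an integral domain we conclude $p_\si=0$. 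Hence $x=p_{id}\cdot id$ for a single polynomial $p_{id}$.

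It remains to identify which polynomials can occur. Testing centrality against the group elements, for $\tau\in S_N$ the identity $\tau x=x\tau$ becomes $\tau(p_{id})\,\tau=p_{id}\,\tau$, i.e.\ $\tau(p_{id})=p_{id}$; as this holds for all $\tau$, the polynomial $p_{id}$ is invariant under the diagonal $S_N^{z,\la}$-action, so $p_{id}\in P_N$ and $x\in P_N\cdot id$. There is essentially no obstacle in this argument; the only point deserving a moment's attention is the deduction of $p_\si=0$ for $\si\ne id$ from $p_\si\,(f-\si(f))=0$, which is settled by exhibiting a single variable genuinely moved by $\si$ together with the fact that $\C[\bs z,\bs\la]$ has no zero divisors.
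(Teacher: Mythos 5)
Your proof is correct and complete: both inclusions are handled by the standard argument (testing centrality against the polynomial subalgebra to kill the coefficients $p_\si$ for $\si\neq id$, using that $\C[\bs z,\bs\la]$ is a domain, and then against the group elements to force invariance of $p_{id}$). The paper states this lemma as a standard fact without proof, and what you have written is exactly the argument the authors are implicitly invoking.
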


Define the {\it universal multi-symmetric polynomial}
\be
\mc P^P=\prod_{i=1}^N((u-z_i)(v-\la_i)-1).
\ee
It is a polynomial in variables $u,v$ with coefficients in $P_N$.
Write
\be
\mc P^P=\sum_{i,j=0}^N p_{ij} u^{N-i}v^{N-j},
\qquad p_{ij}=p_{ij}(\bs z,\bs\la)\in P_N.
\ee
We have $p_{00}=1$.
\begin{lem}\label{multisym gen}
The polynomials $p_{ij}$, $i,j=0,1\lc N$, generate the algebra
$P_N$.
\end{lem}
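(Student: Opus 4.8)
The plan is to show that the subalgebra $A \subset P_N$ generated by the coefficients $p_{ij}$ already contains a set of generators of $P_N$, exploiting the factorization structure of $\mc P^P$. First I would single out the "diagonal" coefficients coming from the two extreme factors. Setting $v=0$ (or rather extracting the top-degree-in-$v$ part) collapses $\prod_i((u-z_i)(v-\la_i)-1)$ to $\prod_i (u-z_i)$ up to sign and a power of $v$, so the coefficients $p_{i0}$ (resp. $p_{0j}$) are, up to sign, the elementary symmetric polynomials $e_i(\bs z)$ (resp. $e_j(\bs \la)$). Hence $A$ contains $\C[\bs z]^{S_N^z}\otimes\C[\bs\la]^{S_N^\la}$.

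Next, by Lemma 2.1 the algebra $P_N$ is a free module of rank $N!$ over $R := \C[\bs z]^{S_N^z}\otimes\C[\bs\la]^{S_N^\la}$, so it suffices to produce inside $A$ enough $R$-module generators — equivalently, to check that the images of the $p_{ij}$ in the fiber $P_N \otimes_R \C_\xi$ at a generic point $\xi$ span that $N!$-dimensional fiber, or more cleanly that the $p_{ij}$ together with $R$ separate points of $\operatorname{Spec} P_N$ over a generic point of $\operatorname{Spec} R$. A point of $\operatorname{Spec} P_N$ over a generic $(\bs z,\bs\la)$-orbit datum is a matching between the (distinct) roots $z_i$ and the $\la_j$, i.e. a choice of which $\la_{\tau(i)}$ is paired with which $z_i$. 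I would argue that the polynomial $\mc P^P$, viewed as a function of $(u,v)$ with its coefficients evaluated at such a point, remembers this matching: its zero locus in $\C^2$ is the union over $i$ of the hyperbolas $(u-z_i)(v-\la_{\tau(i)})=1$, and from the coefficients $p_{ij}$ one can reconstruct this curve, hence the pairs $(z_i,\la_{\tau(i)})$, hence $\tau$. This shows the $p_{ij}$ generate $P_N$ over $R$, and since $R\subset A$, we conclude $A = P_N$.

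Concretely, to make the separation argument rigorous I would localize at the multiplicative set of discriminants $\operatorname{disc}(\bs z)\cdot\operatorname{disc}(\bs\la)$, work over the open subset $U\subset\operatorname{Spec} R$ where both sets of roots are distinct, and note that over $U$ the cover $\operatorname{Spec} P_N \to \operatorname{Spec} R$ is étale with fiber the set of bijections $\{z_i\}\to\{\la_j\}$; the map sending such a bijection to the tuple $(p_{ij})$ is injective because, as above, the curve $\bigcup_i\{(u-z_i)(v-\la_{\tau(i)})=1\}$ determines its defining (monic, bidegree $(N,N)$) polynomial up to scalar and that scalar is fixed by $p_{00}=1$. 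Therefore the $p_{ij}$ generate $P_N$ after inverting the discriminants; combined with the fact that $A$ already contains $R$ (which contains the discriminants) and that $P_N$ is finite over $R$, a standard argument — lifting an $R$-module generating set through the surjection $A/(\text{stuff})$, or simply noting that a finitely generated module over a Noetherian ring that becomes generated by given elements after inverting a non-zero-divisor, and already contains those inverted elements' ring of definition, is generated by them — upgrades this to generation over $\C$.

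The main obstacle I anticipate is the last bookkeeping step: passing from "the $p_{ij}$ generate $P_N$ as an $R$-algebra after localization" back to "the $p_{ij}$ generate $P_N$ as a $\C$-algebra on the nose." One must make sure no denominators are needed. The cleanest fix is to avoid localization in the final statement entirely: show directly that for each $R$-module generator $f$ of $P_N$ (there are $N!$ of them by Lemma 2.1), $f$ lies in the $R$-span of monomials in the $p_{ij}$, by a degree/leading-term induction using that the $p_{ij}$ with $i,j\geq 1$ already exhibit all the "mixed" symmetric functions $\sum_k z_k^a\la_k^b$ as polynomials in themselves and in $R$ — indeed expanding $\mc P^P = \prod_i((u-z_i)(v-\la_i)-1)$ and comparing with $\prod_i(u-z_i)(v-\la_i) = \sum$ (power sums $p_a(\bs z\bs\la$-type$)$ combinations) shows $p_{11} = -\sum_i 1 + (\text{lower}) $, and more generally the $p_{ij}$ for $i,j\ge 1$ generate, modulo $R$, the algebra of multi-symmetric power sums $\sum_k z_k^a\la_k^b$, which together with $R$ is classically all of $P_N$.
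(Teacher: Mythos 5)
Your first route (separating points of the generic fiber) has a genuine gap. Knowing that the $p_{ij}$, together with $R=\C[\bs z]^{S_N^z}\otimes\C[\bs\la]^{S_N^\la}$, separate the $N!$ points of a generic fiber of $\on{Spec}P_N\to\on{Spec}R$ only tells you that the subalgebra $A$ generated by the $p_{ij}$ has the same fraction field as $P_N$, i.e.\ that $P_N$ is the normalization of $A$; it does not give $A=P_N$. The ``standard argument'' you invoke to upgrade generation-after-inverting-the-discriminants to generation on the nose is false as stated: a submodule that becomes everything after inverting a non-zero-divisor need not be everything (already $2\Z\subset\Z$ fails). You sense this and pivot, in your last paragraph, to the strategy the paper actually uses: show that the $p_{ij}$ produce the mixed power sums $\sum_k z_k^a\la_k^b$, which together with $R$ classically generate $P_N$ (Weyl). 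But that is precisely the step you leave as an assertion, and your one concrete computation points at the difficulty rather than resolving it: $p_{11}=e_1(\bs z)e_1(\bs\la)-N$, so modulo $R$ it is a constant and does \emph{not} exhibit $\sum_k z_k\la_k$; a naive leading-term induction on the $p_{ij}$ with $i,j\ge1$ has nothing to get started on.

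The paper's device for the extraction is the following. Writing $Z,\La$ for the diagonal matrices, $\mc P^P=\det((u-Z)(v-\La)-1)$, and $\det(u-Z)$, $\det(v-\La)$ are built from the $p_{i0}$, $p_{0j}$ exactly as you observe; hence every coefficient of the series $\log\det(1-(u-Z)^{-1}(v-\La)^{-1})=\tr\log(1-(u-Z)^{-1}(v-\La)^{-1})$ is a polynomial in the $p_{ij}$. The logarithm turns the determinant into traces, and expanding shows the coefficient of $u^{-i-1}v^{-j-1}$ equals $-\tr(\La^iZ^j)$ plus an integer combination of $\tr(\La^kZ^l)$ with $k\le i$, $l\le j$, $(k,l)\neq(i,j)$. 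Triangular induction then recovers all $\tr(\La^kZ^l)=\sum_s\la_s^kz_s^l$, and Weyl's theorem finishes the proof. Without this identity (or an equivalent one), your sketch does not close; with it, your first two paragraphs become unnecessary.
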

\begin{proof}
Let $Z=\on{diag}(z_1\lc z_N)$, $\La=\on{diag}(\la_1\lc\la_N)$
be the diagonal $N\times N$ matrices. Then $\mc
P^P=\det((u-Z)(v-\La)-1)$ and
\be
\det(u-Z)=u^N+\sum_{i=1}^Np_{i0}u^{N-i}, \qquad
\det(v-\La)=v^N+\sum_{j=1}^Np_{0j}v^{N-j}.
\ee
Therefore, the coefficients of the series
$\log\det(1-(u-Z)^{-1}(v-\La)^{-1})$ in $u^{-1}, v^{-1}$
are polynomials in $p_{ij}$. We have
\begin{align*}
& \log \det (1-(u-Z)^{-1}(v-\La)^{-1})\,=\,
\tr(\log(1-(u-Z)^{-1}(v-\La)^{-1}))\,={}\\
&\,{}-\sum_{r=1}^\infty\frac{1}{r}\tr\left((\sum_{i=0}^\infty Z^iu^{-i-1})
(\sum_{j=0}^\infty \La^jv^{-j-1})\right)^r=\,
-\sum_{i,j,k,l=0}^\infty c_{ij}^{kl}\ \tr(\La^kZ^l)u^{-i-1}v^{-j-1}\,,
\end{align*}
where $c_{ij}^{kl}$ are rational numbers.
In the last equality we used $Z\La=\La Z$.
Then $c_{ij}^{ij}=1$ and $c_{ij}^{kl}=0$ if $k>i$ or $l>j$.
Therefore, by triangularity, $\tr(\La^kZ^l)$ are polynomials in $p_{ij}$.

It is well-known that the power sums multi-symmetric polynomials
$\tr(\La^kZ^l)=\sum_{i=1}^N \la_i^kz_i^l$ generate $P_N$, see
\cite{W}. The lemma follows.
\end{proof}

\subsection{The Bethe algebra}
Let $\gln$ denote the complex Lie algebra of all $N\times N$ matrices
and $U\gln$ its universal enveloping algebra. The algebra $U\gln$ is
generated by the elements $e_{ij}$, $i,j=1\lc N,$ satisfying the
relations $[e_{ij},e_{sk}]=\delta_{js}e_{ik}-\delta_{ik}e_{sj}$.

Let $\gln[t]$ denote the current algebra of $\gln$ and $U(\gln[t])$
its universal enveloping algebra. The algebra $U(\gln[t])$ is
generated by elements $e_{ij}\otimes t^r$, $i,j=1\lc N$,
$r\in\Z_{\geq 0}$, satisfying the relations
$[e_{ij}\otimes t^r,e_{sk}\otimes t^p]=\delta_{js}e_{ik}\otimes
t^{r+p}-\delta_{ik}e_{sj}\otimes t^{r+p}$.

It is convenient to collect elements of $\gln[t]$ in generating series
of the variable $u$. Namely, for $g\in\gln$, set
\be
g(u)=\sum_{s=0}^\infty (g\otimes t^s)u^{-s-1}.
\ee

Let $\bs\la=(\la_1\lc\la_N)$ be a sequence of formal commuting
variables. Denote the algebra of polynomials in variables $\la_1\lc\la_N$
with values in $U(\gln[t])$ by $U(\gln[t])[\bs\la]$.

We define the {\it row determinant} of an $N\times N$ matrix $A$
with entries $a_{ij}$ in a possibly non-commutative algebra to be
\beq\label{rdet}
\on{rdet} A\,=\,
\sum_{\sigma\in S_N} a_{1\sigma(1)}a_{2\sigma(2)}\dots a_{N\sigma(N)}.
\eeq

Denote the operator of the formal differentiation
with respect to the variable $u$ by $\partial$.
Define the {\it universal operator}
$\mc D^{\mc B}$ by
\be
\mc D^{\mc B}=\on{rdet}
\left( \begin{matrix}
\partial-\la_1-e_{11}(u) & -e_{21}(u)& \dots & -e_{N1}(u)\\
-e_{12}(u) &\partial-\la_2 -e_{22}(u)& \dots & -e_{N2}(u)\\
\dots & \dots &\dots &\dots \\
-e_{1N}(u) & -e_{2N}(u)& \dots & \partial-\la_N-e_{NN}(u)
\end{matrix}\right).
\ee

The universal operator $\mc D^{\mc B}$ is a differential operator in
the variable $u$ whose coefficients are formal power series in
$u^{-1}$ with coefficients in $U(\gln[t])[\bs\la]$. Write

\be
\mc D^{\mc B}= \partial^N+\sum_{i=1}^N B_i(u)\partial^{N-i}, \qquad
B_i(u)=\sum_{j=0}^\infty B_{ij}u^{-j},\qquad B_{ij}\in U(\gln[t])[\bs\la].
\ee


We call the unital subalgebra of $U(\gln[t])[\bs\la]$ generated by
$B_{ij}$, $i=1\lc N$, $j\in\Z_{\geq 0}$, the {\it Bethe algebra}
and denote it by $\mc B_N$.

\begin{lem}[\cite{CT},\cite{transfer}]\label{bethe}
The algebra $\mc B_N$ is commutative. The algebra $\mc B_N$
commutes with $e_{ii}$ and multiplications by $\la_i$, $i=1\lc N$.
\qed
\end{lem}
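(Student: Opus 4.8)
The plan is to prove both assertions by a direct computation with the entries of $\mc D^{\mc B}$, treating it as an element of the algebra of differential operators in $u$ with coefficients in $U(\gln[t])[\bs\la]$, and exploiting the known structure of this kind of "Manin matrix"/Talalaev-type construction. First I would record the commutation relations of the generating series $e_{ij}(u)$ with each other: from $[e_{ij}\otimes t^r, e_{sk}\otimes t^p] = \delta_{js} e_{ik}\otimes t^{r+p} - \delta_{ik} e_{sj}\otimes t^{r+p}$ one gets the standard loop-algebra current relation
\be
[e_{ij}(u), e_{sk}(w)] = \frac{1}{u-w}\bigl( e_{ik}(w) - e_{ik}(u)\bigr)\delta_{js} - \frac{1}{u-w}\bigl(e_{sj}(w)-e_{sj}(u)\bigr)\delta_{ik},
\ee
understood as an identity of formal power series. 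The key structural observation, going back to Talalaev and to \cite{CT}, \cite{transfer}, is that the matrix $M$ with entries $M_{ij} = \delta_{ij}(\partial - \la_i) - e_{ji}(u)$ is a \emph{Manin matrix} (its columns generate a commutative-like structure: $[M_{ij}, M_{kl}] = [M_{kj}, M_{il}]$ after accounting for the $\partial$'s), and that the row determinant of a Manin matrix behaves like an ordinary determinant — in particular its coefficients, when extracted as the $B_{ij}$, pairwise commute. So the real content of the first assertion is the verification of the Manin-matrix property for $M$, which reduces to a finite check using the current relations above together with the fact that $\partial$ acts on the $u$-dependence and commutes with all $e_{ij}\otimes t^s$ treated as constants while differentiating; the diagonal shifts $-\la_i$ are central and drop out of all brackets.

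For the second assertion I would compute $[e_{kk}, B_{ij}]$ and $[\la_k, B_{ij}]$ directly. Since $\la_k$ is a formal variable commuting with everything in $U(\gln[t])$, and each entry of the matrix defining $\mc D^{\mc B}$ is linear in the $\la$'s with scalar coefficients, multiplication by $\la_k$ commutes with the whole construction, giving $[\la_k, B_{ij}] = 0$ at once. For $e_{kk}$: from the relation $[e_{kk}, e_{ij}\otimes t^s] = (\delta_{ki} - \delta_{kj})\, e_{ij}\otimes t^s$ we see that conjugation by $\exp(x e_{kk})$ scales $e_{ij}(u)$ by $\exp(x(\delta_{ki}-\delta_{kj}))$, i.e.\ it is the weight grading. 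The matrix $M$ is conjugated by the diagonal matrix $\on{diag}(e^{x}, \dots)$ (with $e^x$ in the $k$-th slot) together with a simultaneous shift that also scales $\la_k$; but since row determinant is invariant under conjugation by diagonal matrices — each $a_{i\sigma(i)}$ in \Ref{rdet} picks up $e^{x}$ from row $i$ exactly when $i=k$ and $e^{-x}$ from column $\sigma(i)$ exactly when $\sigma(i)=k$, and as $\sigma$ ranges over $S_N$ these contributions cancel in each monomial — the row determinant $\mc D^{\mc B}$, and hence every $B_{ij}$, is fixed by $\on{ad}\, e_{kk}$. Therefore $[e_{kk}, B_{ij}] = 0$.

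The main obstacle is the first assertion, the commutativity of the $B_{ij}$. Two routes are available: (i) cite the Manin-matrix formalism and the theorem that row determinants (and more generally all the "quantum minors") of a Manin matrix pairwise commute, reducing the work to verifying the Manin property of $M$ — a short but slightly delicate computation because of the interplay between $\partial$ and the $u^{-1}$-expansions; or (ii) reprove commutativity from scratch by the $r$-matrix/transfer-matrix argument of \cite{transfer}, expressing $\mc D^{\mc B}$ as a suitable normal-ordered expression and using the rational $r$-matrix Yang--Baxter relation for the currents. I would take route (i): first establish that $[M_{ij}, M_{kl}] = [M_{kj}, M_{il}]$ (as operators, tracking the $\partial$-terms carefully), then invoke the general fact about Manin matrices to conclude that the coefficients of $\on{rdet} M = \mc D^{\mc B}$ in powers of $\partial$ and of $u^{-1}$ all commute. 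Since the statement is quoted from \cite{CT} and \cite{transfer}, it is also legitimate to simply cite those references; the sketch above is the argument those papers carry out.
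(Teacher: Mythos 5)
First, a point of reference: the paper offers no proof of this lemma at all --- it is stated with a citation to \cite{CT} and \cite{transfer} and closed immediately --- so your closing remark that simply citing those references is legitimate is exactly what the authors do. Your treatment of the second assertion is correct and complete: $\la_k$ is a central formal variable, and your diagonal-conjugation argument is just the observation that each monomial $M_{1\sigma(1)}\cdots M_{N\sigma(N)}$ in \Ref{rdet} has $\on{ad}\,e_{kk}$-weight $\sum_i(\delta_{k\sigma(i)}-\delta_{ki})=0$, which indeed gives $[e_{kk},B_{ij}]=0$.

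The genuine gap is in your preferred route (i) for the commutativity of $\mc B_N$. There is no ``general fact about Manin matrices'' saying that the coefficients of the row determinant (or of its characteristic polynomial) pairwise commute, so the step where you invoke it would fail. Already for a $2\times 2$ matrix $\left(\begin{smallmatrix} a& b\\ c&d\end{smallmatrix}\right)$ subject only to the Manin relations $[a,c]=[b,d]=0$, $[a,d]=[c,b]$ (the universal right quantum algebra), the trace $a+d$ and the row determinant $ad-cb$ do not commute. The Manin property of $\partial-\la-E^{t}(u)$ buys you the determinantal machinery --- independence of column ordering, Cramer's rule, MacMahon, Newton identities --- but the commutativity $[B_i(u),B_j(w)]=0$ is Talalaev's theorem, and its proof genuinely uses the specific $r$-matrix commutation relations of the currents $e_{ij}(u)$: in \cite{CT} it is obtained by degenerating the commutativity of the Bethe subalgebra of the Yangian (an RTT computation), and in \cite{transfer} by a related transfer-matrix argument. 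In other words, your route (ii), which you only gesture at, is the one that carries the entire weight of the first assertion; as written, route (i) replaces the hard step by an appeal to a statement that is false in the claimed generality.
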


As a subalgebra of $U(\gln[t])[\bs\la]$, $\mc B_N$ acts on any
$U(\gln[t])[\bs\la]$-module $M$. Since $\mc B_N$ commutes with
$e_{ii}$, it preserves the $\gln$-weight decomposition of the module
$M$.

\subsection{The Cherednik algebra}
Denote by $H_N$ the {\it rational Cherednik algebra associated with
the symmetric group $S_N$}. The algebra $H_N$ is the unital complex
algebra with generators $x_i,y_i,s_{jk}$, where $i,j,k=1\lc N$,
$j\neq k$, and relations
\begin{gather*}
s_{ij}=s_{ji}, \quad s^2_{ij}=1,
\quad s_{ij} s_{jk} =s_{ik}s_{ij}, \quad s_{ij}s_{kl}=s_{kl}s_{ij}, \\
[x_i,x_j] = [ y_i,y_j ]=0,\\
s_{ij}x_i=x_js_{ij},\quad s_{ij}y_i=y_js_{ij},
\quad [s_{ij},x_k]=[s_{ij},y_k]=0, \\
\ [x_i,y_j]=s_{ij},\quad [x_i,y_i]=-\sum_{a,\ a\neq i}s_{ia},
\end{gather*}
where in each relation all the indices are distinct elements of
$\{1\lc N\}$. The rational Cherednik algebra $H_N$ is a two step
degeneration of the double affine Hecke algebra, see \cite{Ch}.

We employ the notation $\C[\bs x]=\C[x_1\lc x_N]$ and $\C[\bs
y]=\C[y_1\lc y_N]$. The algebra $H_N$ is a deformation of the
wreath product of the algebra of polynomials in commuting variables
$\C[\bs x]\otimes \C[\bs y]$ and of the group algebra $\C[S_N^{x,y}]$
generated by transpositions $s_{ij}$. In particular, we have a linear
isomorphism given by the multiplication map:
\begin{gather*}
\C[\bs x]\otimes \C[S_N]\otimes\C[\bs y]\,\to\, H_N,\\
q\otimes\sigma\otimes p \,\mapsto\, q\ \sigma \ p.
\end{gather*}
We call this isomorphism the {\it normal ordering map} and denote it by $::$.

For example, $:y_1x_2y_3x_4:=x_2x_4y_1y_3$,
$:x_1y_1^2x_2:=x_1x_2y_1^2$, etc.
Note that we omit the tensor signs in writing the elements of
$\C[\bs x]\otimes \C[S_N]\otimes\C[\bs y]$.

Denote by $\mc Z_N\subset H_N$ the center of $H_N$.

Define the {\it universal central polynomial}
\be \mc
P^{\mc Z}=(-1)^N\sum_{\sigma\in S_N}
:\prod_{i,\ \sigma(i)=i} (1-(v-x_i)(u-y_i)):\
(-1)^\sigma\sigma.
\ee
The polynomial $\mc P^{\mc Z}$ is a polynomial in
$u$ and $v$ with coefficients in $H_N$. Write
\be
\mc P^{\mc Z}=\sum_{i,j=0}^N c_{ij}\ v^{N-i}u^{N-j},\qquad c_{ij}\in H_N.
\ee

\begin{thm}\label{central thm}
The elements $c_{ij}$, $i,j=0\lc N$, generate the center
$\mc Z_N\subset H_N$.
\end{thm}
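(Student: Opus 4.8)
The plan is to work in the associated graded algebra of $H_N$. Equip $H_N$ with the increasing filtration in which $\deg x_i=\deg y_i=1$ and $\deg s_{ij}=0$. By the PBW theorem for rational Cherednik algebras the relations $[x_i,y_j]=s_{ij}$ and $[x_i,y_i]=-\sum_{a\neq i}s_{ia}$ become trivial in $\on{gr}H_N$, so $\on{gr}H_N\cong\C[\bs x,\bs y]\ltimes\C S_N^{x,y}$, the wreath product in which $S_N$ permutes $\bs x$ and $\bs y$ simultaneously. By Lemma~\ref{smash center}, applied with $\bs z,\bs\la$ replaced by $\bs x,\bs y$, the center of this algebra is $P_N^{x,y}\cdot id$, the multi-symmetric polynomials in $(\bs x,\bs y)$. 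Since $\on{gr}\mc Z_N\subseteq Z(\on{gr}H_N)$, we obtain $\on{gr}\mc Z_N\subseteq P_N^{x,y}$ for free. I also note at once that only the term $\sigma=id$ of $\mc P^{\mc Z}$ contributes to the coefficients $c_{i0}$ and $c_{0j}$ — the factor attached to $\sigma$ has degree in $u$ and in $v$ at most the number of fixed points of $\sigma$ — and that term gives $c_{i0}=(-1)^ie_i(\bs x)$ and $c_{0j}=(-1)^je_j(\bs y)$ exactly; hence the subalgebra $\langle c_{ij}\rangle\subseteq H_N$ generated by all the $c_{ij}$ already contains $\C[\bs x]^{S_N}\otimes\C[\bs y]^{S_N}$.

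Two things then remain: that each $c_{ij}$ lies in $\mc Z_N$, and that $\langle c_{ij}\rangle$ fills out all of $P_N^{x,y}$ in the associated graded algebra. For centrality, $\mc P^{\mc Z}$ is manifestly invariant under simultaneous $S_N$-conjugation of $\bs x$, $\bs y$ and the transpositions $s_{ij}$ — the sum over $\sigma$ together with the product over the fixed points of $\sigma$ is conjugation-equivariant — so each $c_{ij}$ commutes with $\C[S_N]$; commutation with each $x_k$ and each $y_k$ I would verify by a direct computation from the defining relations, the point being that the terms with $\sigma\neq id$ are exactly the correction needed so that the ``diagonal'' part $:\prod_i(1-(v-x_i)(u-y_i)):$ acquires centrality, via a telescoping cancellation organized by the fixed-point set of $\sigma$.

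For generation, the idea is that $\mc P^{\mc Z}$ is the $H_N$-counterpart of the universal multi-symmetric polynomial $\mc P^P=\prod_i((u-z_i)(v-\la_i)-1)$: modulo the lower-filtration corrections coming from $\sigma\neq id$, the coefficient $c_{ij}$ reduces to $p_{ij}(\bs x,\bs y)$ (normal ordered), whose leading symbol is the top-degree part $(-1)^{i+j}e_i(\bs x)e_j(\bs y)$ of $p_{ij}$. I would then repeat, inside the commutative algebra $\langle c_{ij}\rangle$, the computation from the proof of Lemma~\ref{multisym gen}: the universal triangular identities expressing the power sums $\tr(\La^kZ^l)=\sum_a\la_a^kz_a^l$ in terms of the $p_{ij}$ — extracted from $\log\det(1-(u-Z)^{-1}(v-\La)^{-1})$ — transfer with $p_{ij}$ replaced by $c_{ij}$ and produce central elements $w_{kl}\in\langle c_{ij}\rangle$ whose symbols are the power-sum multi-symmetric polynomials $\sum_ax_a^kz_a^l$ in $\bs x,\bs y$. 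Since these power sums generate $P_N^{x,y}$ (Lemma~\ref{multisym gen}), it follows that $P_N^{x,y}\subseteq\on{gr}\langle c_{ij}\rangle\subseteq\on{gr}\mc Z_N\subseteq P_N^{x,y}$, forcing all four to coincide. An inclusion of filtered algebras that is onto on associated graded algebras is an equality, so $\langle c_{ij}\rangle=\mc Z_N$, which is the theorem.

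The delicate step is this last one. The leading symbols $\pm e_i(\bs x)e_j(\bs y)$ of the $c_{ij}$ by themselves generate only $\C[\bs x]^{S_N}\otimes\C[\bs y]^{S_N}$, which is strictly smaller than $P_N^{x,y}$, so the generation genuinely relies on the lower-order terms and on the cancellations in the $\log\det$ identities; making this rigorous requires a sufficiently explicit ``determinantal'' handle on $\mc P^{\mc Z}$, so that the trace/logarithm manipulation makes sense over the noncommutative algebra $H_N$, together with careful bookkeeping of filtration degrees through those cancellations. The centrality computation in the second step is the other place demanding care, though it is routine in spirit.
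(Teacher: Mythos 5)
Your strategy coincides with the paper's: prove centrality of the $c_{ij}$ directly from the relations, then pass to the filtration $\deg x_i=\deg y_i=1$, $\deg s_{ij}=0$, identify $\on{gr}H_N$ with $\C[\bs z,\bs\la]\ltimes\C S_N^{z,\la}$ so that $\on{gr}\mc Z_N\subseteq P_N$ by Lemma~\ref{smash center}, and conclude by showing the $c_{ij}$ account for all of $P_N$ via Lemma~\ref{multisym gen}. Your skeleton is correct, and your computation of $c_{i0}$, $c_{0j}$ and of the leading symbols $(-1)^{i+j}e_i(\bs x)e_j(\bs y)$ is right. Moreover, your ``delicate step'' is a genuine observation that the paper's own write-up glosses over: the paper simply says the $c_{ij}$ ``are projected to'' the $p_{ij}$, but since $p_{ij}$ is inhomogeneous the honest symbol of $c_{ij}$ is only its top part $(-1)^{i+j}e_ie_j$, and these alone generate a proper subalgebra of $P_N$. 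Your proposed repair --- transporting the triangular $\log\det$ identities of Lemma~\ref{multisym gen} to produce elements of $\langle c_{ij}\rangle$ whose symbols are the power sums $\sum_a x_a^k y_a^l$ --- does close this, and the bookkeeping you defer works out: each application of $[x_i,y_j]=s_{ij}$ drops the filtration degree by exactly $2$, so reordering corrections to the identity component of a product of $c$'s enter only four degrees below the formal weight, whence the identity component of any polynomial in the $c$'s agrees with the same polynomial in the $p$'s down to two degrees below the weight; the residual $\sigma\neq id$ contamination at the surviving top degree is then killed because the element is central and the symbol of a central element must lie in $P_N\cdot id$.

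The two places where your proposal stops short of a proof are exactly the two computations you announce but do not perform. The centrality of $\mc P^{\mc Z}$ is not a formality: it is the bulk of the paper's proof, carried out by writing $[y_1,\mc P^{\mc Z}]=(-1)^N\sum_\sigma(-1)^\sigma C_\sigma\sigma$ and checking $C_\sigma=0$ in the three cases $\sigma(1)=1$; $\sigma(1)=k$, $\sigma(k)=1$; and $\sigma(1)=k$, $\sigma(l)=1$ with $k\neq l$ --- your phrase ``telescoping cancellation organized by the fixed-point set'' names the right mechanism but does not execute it. Likewise the filtration bookkeeping for generation needs the degree-drop-by-two observation and the centrality argument above to be stated explicitly; without them the cancellation of the leading symbols leaves the value of the subleading term unjustified. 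So: right approach, correct and in one respect more self-aware than the paper, but incomplete as written.
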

\begin{proof}
First, we show that the elements $c_{ij}$ are central.
It is clear that $s_{ij}\mc P^{\mc Z}=\mc P^{\mc Z}s_{ij}$ for any $i,j$.
Hence, to show that $[x_i,\mc P^{\mc Z}]=[y_i,\mc P^{\mc Z}]=0$ for all
$i=1\lc N$, it is enough to check these equalities for $i=1$ only.

We begin with proving that $[y_1,\mc P^{\mc Z}]=0$.
Let
\be
a_i=(1-(v-x_i)(u-y_i)), \qquad A_\sigma=:\prod_{i,\ \sigma(i)=i}a_i:\,,
\ee
so that
\be
\mc P^{\mc Z}=(-1)^N\sum_{\sigma \in S_N}(-1)^\sigma A_\sigma \sigma\,.
\ee
Let
\be
[y_1,\mc P^{\mc Z}]=(-1)^N\sum_{\sigma\in S_N} (-1)^\sigma C_\sigma \sigma,
\ee
where $C_\sigma$ has the form
\be
C_\sigma=\sum_{i=1}^{k_{\sigma}} p_{i,\sigma}(\bs x)q_{i,\si}(\bs y).
\ee
Then we have
\be
C_\sigma=A_\sigma(y_1-y_{\sigma(1)})-
\sum_{i=2}^N \phi([y_1,A_{s_{1i}\sigma}] s_{1i}),
\ee
where $\phi$ is a linear map
\be
\phi: H_N\to H_N,\qquad
p(\bs x)q(\bs y)\si\mapsto \delta_{id,\si}p(\bs x)q(\bs y).
\ee

The expression $\phi([y_1,A_{s_{1i}\sigma}] s_{1i})$ equals zero unless
$\sigma(1)=i$ or $\sigma(i)=1$. Hence, $C_\sigma=0$ if $\sigma(1)=1$.

Assume that $\sigma(1)=k\ne 1$ and $\sigma(k)=1$. Then
\begin{align*}
C_\sigma\,=\,A_\sigma(y_1-y_k) &{}-\phi([y_1,A_{s_{1k}\sigma}] s_{1k})\,={}
\\[3pt]
{}=\,A_\sigma(y_1-y_k) &{}-(A_\sigma(u-y_k)-A_\sigma(u-y_1)-{}
\\
&{}-(v-x_1)A_\sigma(u-y_1)(u-y_k)+(v-x_1)A_\sigma(u-y_1)(u-y_k))\,=\,0.
\end{align*}

Assume that $\sigma(1)=k\ne 1$, $\sigma(l)=1$, and $k\ne l$. Then
\begin{align*}
C_\sigma\,=\,A_\sigma(y_1-y_k) &{}-\phi([y_1,A_{s_{1k}\sigma}] s_{1k})
-\phi([y_1,A_{s_{1l}\sigma}] s_{1l})\,={}
\\[3pt]
{}=\,A_\sigma(y_1-y_k) &{}-A_\sigma(u-y_k)+A_\sigma(u-y_1)\,=\,0.
\end{align*}

The proof of $[x_1,\mc P^{\mc Z}]=0$ is similar with the following
modification: we use
\be
\mc P^{\mc Z}=(-1)^N\sum_{\sigma \in S_N}(-1)^\sigma \sigma A_\sigma
\ee
and move elements $\sigma\in S_N$ to the left.

There is a filtration on $H_N$ given by $\deg x_i=\deg y_i=1$, $\deg s_{ij}=0$.
The associated graded ring is isomorphic to the wreath product
$\C[\bs z,\bs\la]\ltimes\C S_N^{z,\la}$, and all graded components are
finite-dimensional. The center of $H_N$ is projected into the center of
the wreath product, which is the algebra $P_N$, see Lemma~\ref{smash center}.
The elements $c_{ij}$ are projected to the elements $p_{ij}$, which are
generators of $P_N$ by Lemma \ref{multisym gen}. Hence, the elements $c_{ij}$
generate the center of $H_N$.
\end{proof}

A {\it central character} of $H_N$ is an algebra homomorphism
$\chi: \mc Z_N\to \C$. Central characters determine irreducible representations
of $H_N$, see Theorem~1.24 in \cite{EG}. Let
\beq\label{symm el}
e=\frac{1}{N!}\sum_{\sigma\in S_N}\sigma\in H_N
\eeq
be the symmetrizing element.

\begin{thm}[\cite{EG}]\label{irreps}
Any irreducible $H_N$-module has dimension $N!$ and is isomorphic to
the regular representation of $S_N$ as an $S_N$-module.
Irreducible $H_N$-modules are in a bijective correspondence with algebra
homomorphisms $\chi: \mc Z_N\to \C$.
The irreducible $H_N$-module corresponding to the central character $\chi$ is
given by $H_Ne\ \otimes_{\mc Z_N}\chi$.
\qed
\end{thm}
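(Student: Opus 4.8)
This theorem is due to Etingof and Ginzburg, and my plan is to reproduce their argument in the present notation, in three stages. \emph{Stage~1: $H_N$ is prime and module-finite over its center $\mc Z_N$, of generic rank $(N!)^2$.} Equip $H_N$ with the degree filtration $\deg x_i=\deg y_i=1$, $\deg s_{ij}=0$ used in the proof of Theorem~\ref{central thm}, so that $\on{gr}H_N\cong\C[\bs z,\bs\la]\ltimes\C S_N^{z,\la}$, with finite-dimensional graded components. By Lemma~\ref{smash center} the center of this graded ring is $P_N$; by the proof of Theorem~\ref{central thm} the principal symbols of the generators $c_{ij}$ of $\mc Z_N$ are the generators $p_{ij}$ of $P_N$, so $\on{gr}\mc Z_N=P_N$. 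Now $\C[\bs z,\bs\la]$ is free of rank $(N!)^2$ over $\C[\bs z]^{S_N^z}\otimes\C[\bs\la]^{S_N^\la}$ (Chevalley), a subring of $P_N$ over which $P_N$ is free of rank $N!$ (the first lemma of Section~\ref{sec Z=B}); hence $\C[\bs z,\bs\la]$ is module-finite over $P_N$ of generic rank $N!$, and $\C[\bs z,\bs\la]\ltimes\C S_N^{z,\la}=\bigoplus_{\sigma\in S_N}\C[\bs z,\bs\la]\sigma$ is module-finite over $P_N$ of generic rank $(N!)^2$. Lifting a module-generating set through the filtration shows $H_N$ is module-finite over $\mc Z_N$ of generic rank $(N!)^2$; in particular $\mc Z_N$ is a finitely generated $\C$-algebra. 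Finally $H_N$ is prime, because $\C[\bs z,\bs\la]\ltimes\C S_N^{z,\la}$ is prime ($S_N$ acts faithfully on $\C^{2N}$) and a filtered ring with prime associated graded is prime; hence $\mc Z_N$ is a domain of Krull dimension $2N$.

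\emph{Stage~2: $H_N$ is an Azumaya algebra over $\mc Z_N$ of rank $(N!)^2$.} By Posner's theorem $H_N\otimes_{\mc Z_N}\on{Frac}\mc Z_N$ is central simple over $\on{Frac}\mc Z_N$, and it is split, i.e.\ equal to $\on{Mat}_{N!}(\on{Frac}\mc Z_N)$, because $H_N$ carries the $\mc Z_N$-module $H_Ne$ of generic rank $N!$ (here $e$ is as in~\Ref{symm el}: by the Stage~1 method $\on{gr}(H_Ne)=(\C[\bs z,\bs\la]\ltimes\C S_N^{z,\la})\bar e=\C[\bs z,\bs\la]\bar e\cong\C[\bs z,\bs\la]$, of rank $N!$ over $P_N$). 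That $H_N$ is Azumaya at \emph{every} closed point of $\on{Spec}\mc Z_N$ --- equivalently, that no central character degenerates --- is the Etingof--Ginzburg theorem, and it holds because $\on{Spec}\mc Z_N$ is the type-$A$ Calogero--Moser space, a smooth connected affine variety of dimension $2N$. I would cite this (its proof rests on Wilson's realization of the Calogero--Moser space) rather than reprove it.

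\emph{Stage~3: the representation theory.} Let $V$ be an irreducible $H_N$-module. Since $H_N$ has countable dimension over $\C$, Dixmier's form of Schur's lemma gives $\on{End}_{H_N}(V)=\C$, so $\mc Z_N$ acts on $V$ through a homomorphism $\chi\colon\mc Z_N\to\C$ with maximal kernel $\mathfrak m_\chi$, and $V$ is an irreducible module over $H_N/\mathfrak m_\chi H_N\cong\on{Mat}_{N!}(\C)$ by Stage~2. Hence $\dim V=N!$; $V$ is determined up to isomorphism by $\chi$; every $\chi$ arises this way, and distinct $\chi$'s give non-isomorphic modules --- the stated bijection. For the explicit model, $H_Ne$ is projective over $\mc Z_N$ of rank $N!$ by the Stage~2 computation, so $H_Ne\otimes_{\mc Z_N}\chi=H_Ne/\mathfrak m_\chi H_Ne$ has dimension $N!$; as a module over $H_N/\mathfrak m_\chi H_N=\on{Mat}_{N!}(\C)$ it equals $\on{Mat}_{N!}(\C)\,\bar e$ for the image $\bar e$ of $e$, which is an idempotent of rank $\dim(H_Ne\otimes_{\mc Z_N}\chi)/N!=1$, so this module is the unique simple one, that is, $V$. (One also has that $eH_Ne$ is commutative and $z\mapsto ze$ identifies it with $\mc Z_N$.) Finally, for a primitive idempotent $f$ of $\C S_N$ in the isotypic block of an irreducible $S_N$-representation $\tau$, the Stage~1 method gives $\on{gr}(H_Nf)=(\C[\bs z,\bs\la]\ltimes\C S_N^{z,\la})\bar f\cong\C[\bs z,\bs\la]\otimes_\C(\C S_N\bar f)$, free of rank $\dim\tau$ over $\C[\bs z,\bs\la]$, so $\on{rank}_{\mc Z_N}(H_Nf)=(\dim\tau)\,N!$; therefore the image of $f$ in $\on{Mat}_{N!}(\C)$ is an idempotent of rank $\dim\tau$, and this rank equals $\dim(fV)$, i.e.\ the multiplicity of $\tau$ in $V|_{S_N}$. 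Thus every irreducible $S_N$-type $\tau$ occurs in $V$ with multiplicity $\dim\tau$, which says exactly that $V\cong\C S_N$ as an $S_N$-module.

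The substantive obstacle is Stage~2 --- that $H_N$ is Azumaya at every closed point, equivalently that no central character degenerates. This is precisely where smoothness of the Calogero--Moser space is indispensable, and it is the heart of the Etingof--Ginzburg result. Stage~1 is filtration bookkeeping, and Stage~3 is the Dixmier--Schur lemma together with the idempotent computations above.
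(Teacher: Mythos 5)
The paper offers no proof of Theorem \ref{irreps} at all: it is imported verbatim from \cite{EG} and closed with \verb"\qed", so there is nothing internal to compare your argument against. Judged on its own terms, your reconstruction of the Etingof--Ginzburg argument is correct and well organized. Stage 1 is sound: the degree filtration does give $\on{gr}H_N\cong\C[\bs z,\bs\la]\ltimes\C S_N^{z,\la}$, the symbol argument combined with Lemmas \ref{smash center} and \ref{multisym gen} and Theorem \ref{central thm} does give $\on{gr}\mc Z_N=P_N$, and the rank counts $(N!)^2$ for $H_N$ and $N!$ for $H_Ne$ over $\mc Z_N$ follow as you say (for $H_Nf$ one should note that $H_N=H_Nf\oplus H_N(1-f)$ is a decomposition of \emph{filtered} modules, since $f$ has degree $0$, which forces $\on{gr}(H_Nf)=(\on{gr}H_N)\bar f$; the same remark covers $H_Ne$). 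Stage 3 (Dixmier--Schur, the fiber of the Azumaya algebra being $\on{Mat}_{N!}(\C)$, and the idempotent count identifying $V|_{S_N}$ with the regular representation) is also correct. You are right that the entire weight of the theorem sits in Stage 2: that $H_N$ is Azumaya at \emph{every} closed point of $\on{Spec}\mc Z_N$. Be aware that smoothness of the Calogero--Moser space alone does not formally suffice; one also needs the Etingof--Ginzburg statement that for these algebras the Azumaya locus coincides with the smooth locus of $\on{Spec}\mc Z_N$ (which uses the Poisson structure), and then Wilson's smoothness of $C_N$ \cite{Wn} finishes it. Since you cite \cite{EG} for exactly this step --- as the paper does for the whole theorem --- that is an acceptable division of labor; note also that the identification $\on{Spec}\mc Z_N\cong C_N$ is rederived independently in this paper (Corollary \ref{O=C}, via Theorems \ref{Z=B} and \ref{C=B}), so appealing to it here creates no circularity.
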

We denote the set of isomorphism classes of irreducible modules
of the Cherednik algebra $H_N$ by $R_N$.

\subsection{The space $\V_{\bs 1}$}
Let $V$ be the vector representation of $\gln$, $\dim V=N$. Let
$\ep_1\lc\ep_N$ be the standard basis of $V$,
$e_{ij}\ep_k=\delta_{jk}\ep_i$.

Let $\V$ be the space of polynomials in commuting variables
$z_1\lc z_N$ and $\la_1\lc\la_N$ with coefficients in
$V^{\otimes N}$:

\be
\V = V^{\otimes N} \otimes_{\C}\C[\bs z, \bs \la].
\ee
For $v\in V^{\otimes N}$ and
$p(\bs z,\bs\la)\in\C[\bs z, \bs \la]$, we write
$p(\bs z,\bs \la)\,v$ instead of $v\otimes p(\bs z,\bs\la)$.

Denote the subspace of $V^{\otimes N}$ of $\gln$-weight
$(1,1\lc 1)$ by $(V^{\otimes N})_{\bs 1}$:
\be
(V^{\otimes N})_{\bs 1}=\{v\in V^{\otimes N}\ |\ e_{ii}v=v,\ i=1\lc N\}.
\ee
We have $\dim (V^{\otimes N})_{\bs 1}=N!$.
A basis of $(V^{\otimes N})_{\bs 1}$ is given by vectors
\be
\ep_\si=\ep_{\si(1)}\otimes\dots\otimes\ep_{\si(N)}\,,\qquad\sigma\in S_N\,.
\ee
Let
$\V_{\bs 1}= (V^{\otimes N})_{\bs 1}\otimes_{\C}\C[\bs z, \bs \la]$.

Consider the space $\V$ as a $U(\gln[t])[\bs\la]$-module with
the series $g(u)$, \,$g\in\gln$, acting by 
\be
\label{action}
g(u)\,\bigl(p(\bs z,\bs\la)\,v_1\otimes\dots\otimes v_N)\,=\,
p(\bs z,\bs\la)\,\sum_{i=1}^N
\frac{v_1\otimes\dots\otimes gv_i\otimes\dots\otimes v_N}{u-z_i}\ .
\ee
and the algebra $\C[\bs\la]$ acting by multiplication operators. In
particular, the Bethe algebra $\mc B_N$ acts on $\V$. Since $\mc B_N$
commutes with $e_{ii}$, the Bethe algebra $\mc B_N$ also acts on the
space $\V_{\bs 1}$. We denote the image of $\mc B_N$ in
$\on{End}(\V_{\bs 1})$ by $\bar{\mc B}_N$.

We compute the action of the coefficients of the universal operator
$\mc D^{\mc B}$ in $\V_{\bs 1}$. Let $\bar B_{ij}\in\on{End}(\V_{\bs
1})$ be the images of the operators $B_{ij}$.

Define the {\it universal Bethe polynomial} $\mc P^{\bar {\mc B}}$
by the formula
\be
{\mc P}^{\bar{\mc B}}=w(u,\bs z)\left(v^N+\sum_{i=1}^N
\sum_{j=0}^\infty \bar B_{ij}u^{-j}v^{N-i}\right), \qquad
w(u,\bs z)=\prod_{i=1}^N(u-z_i).
\ee
The universal
Bethe polynomial is a polynomial in $u$ and $v$ with
coefficients in $\on{End}(\V_{\bs 1})$, see \cite{capelli}.
Write
\be
{\mc P}^{\bar{\mc B}}=
\sum_{i,j=0}^N\bar b_{ij}u^{N-i}v^{N-j},
\qquad\bar b_{ij}\in \on{End}(\V_{\bs 1}).
\ee

\begin{lem}
The algebra $\bar{\mc B}_N$ is generated by $\bar b_{ij}$,
$i,j=0,1\lc N$.
\end{lem}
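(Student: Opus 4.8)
The plan is to extract from the defining identity of ${\mc P}^{\bar{\mc B}}$ an invertible, triangular change of generators between the infinite family $\bar B_{ij}$ (which generates $\bar{\mc B}_N$, since $\mc B_N$ is generated by the $B_{ij}$) and the finite family $\bar b_{ij}$, $0\le i,j\le N$, with coefficients that are multiplication operators by symmetric functions of $\bs z$; then it remains only to check that those multiplication operators belong to both subalgebras, so that the change of generators does not leave either one.

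First I would compare the two given expressions for ${\mc P}^{\bar{\mc B}}$ by collecting the coefficient of $v^{N-j}$. For $j=0$ this yields $\sum_{i=0}^N\bar b_{i0}\,u^{N-i}=w(u,\bs z)=\prod_{l=1}^N(u-z_l)$, so $\bar b_{i0}$ is multiplication by $(-1)^ie_i(\bs z)$; for $j=1\lc N$ it yields $\sum_{i=0}^N\bar b_{ij}\,u^{N-i}=w(u,\bs z)\,\bar B_j(u)$, the right side being a genuine polynomial in $u$ of degree $\le N$ by the polynomiality of ${\mc P}^{\bar{\mc B}}$ quoted from \cite{capelli}. Next I would observe that $\C[\bs z]^{S_N}$, acting by multiplication, lies in $\bar{\mc B}_N$: from the row determinant one has $B_1(u)=-\sum_{i=1}^N(\la_i+e_{ii}(u))$, and the explicit $\V$-action shows $\sum_{i=1}^Ne_{ii}(u)$ acts as multiplication by $\sum_{l=1}^N(u-z_l)^{-1}$ (because $\sum_ie_{ii}=\on{id}$ on $V$), so $\bar B_{1j}$ is multiplication by $-\sum_lz_l^{\,j}$ for $j\ge1$, and the power sums of $\bs z$ of degree at most $N$ generate $\C[\bs z]^{S_N}$. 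The subalgebra generated by the $\bar b_{ij}$ also contains this copy of $\C[\bs z]^{S_N}$, already by the $j=0$ identity above.

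With this in place the two inclusions are formal. Expanding $w(u,\bs z)=\sum_l(-1)^le_l(\bs z)u^{N-l}$ in the $j\ge1$ identity writes each $\bar b_{ij}$ as a polynomial in the $e_l(\bs z)$ and the coefficients of $\bar B_j(u)$, all of which lie in $\bar{\mc B}_N$, while $\bar b_{i0}\in\bar{\mc B}_N$ too; hence all $\bar b_{ij}\in\bar{\mc B}_N$. Conversely, expanding $w(u,\bs z)^{-1}=\sum_{m\ge0}h_m(\bs z)u^{-N-m}$ ($h_m$ the complete homogeneous symmetric polynomial) in $\bar B_j(u)=w(u,\bs z)^{-1}\sum_i\bar b_{ij}u^{N-i}$ writes each $\bar B_{ij}$ as a polynomial in the $\bar b_{kl}$ with coefficients the multiplication operators $h_m(\bs z)$, which lie in the algebra generated by the $\bar b_{ij}$ by the previous paragraph; hence $\bar{\mc B}_N$ is contained in that algebra. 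The two inclusions give the lemma.

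The whole argument is triangular linear algebra over $\C[\bs z]^{S_N}$, and the only inputs beyond bookkeeping are the quoted polynomiality of ${\mc P}^{\bar{\mc B}}$ and the identification of $\bar B_{1j}$ ($j\ge1$) with multiplication by the power sums $\sum_lz_l^{\,j}$; this latter step --- exhibiting $\C[\bs z]^{S_N}$ inside $\bar{\mc B}_N$, without which the lemma would actually be false --- is where the proof would be slightly more than routine, and is the step I expect to be the main obstacle.
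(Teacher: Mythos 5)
Your argument is correct and follows the same mechanism as the paper's proof: the two generating families are related by multiplication and division by $w(u,\bs z)=\sum_{i}\bar b_{i0}u^{N-i}$, a triangular change of generators over $\C[\bs z]^{S_N^z}$. The paper's proof consists of exactly this observation plus the single sentence that the coefficients of ${\mc P}^{\bar{\mc B}}/w(u,\bs z)$ therefore lie in the algebra generated by the $\bar b_{ij}$; that is, it records only the inclusion $\bar{\mc B}_N\subseteq\langle\bar b_{ij}\rangle$. You additionally prove the reverse inclusion $\bar b_{ij}\in\bar{\mc B}_N$, and you correctly isolate the one non-formal point there: one must know that multiplication by symmetric polynomials in $\bs z$ lies in $\bar{\mc B}_N$. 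Your derivation of this from $B_1(u)=-\sum_i(\la_i+e_{ii}(u))$ together with the fact that $\sum_ie_{ii}$ acts as the identity on $V$, so that $\bar B_{1j}$ for $j\ge1$ acts as multiplication by minus a power sum of $\bs z$, is right and is consistent with the paper's $N=2$ example; the only slip is an off-by-one in the exponent (the coefficient of $u^{-j}$ in $-\sum_l(u-z_l)^{-1}$ is $-\sum_lz_l^{j-1}$, not $-\sum_lz_l^{j}$), which changes nothing since all power sums occur. In short, your proof is a correct and more complete version of the paper's one-line argument.
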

\begin{proof}
We have
\be
w(u,\bs z)=\sum_{i=0}^N\bar b_{i0}u^{N-i}.
\ee
Therefore the coefficients of the power series
${\mc P}^{\bar{\mc B}}/w(u,\bs z)$ are in the algebra generated by
$\bar b_{ij}$.
\end{proof}

\begin{lem}\label{comp bethe}
For all $\tau\in S_N$, we have:
\be
{\mc P}^{\bar{\mc B}} \ep_\tau= (-1)^N
\sum_{\si\in {S_N}} (-1)^\si\prod_{i,\
\sigma(i)=i}(1-(u-z_{\tau^{-1}(i)})(v-\la_i))\ \ep_{\si\tau}.
\ee
Moreover,
the operators $\bar b_{ij}$ commute with multiplications by $\la_s$ and $z_s$.
\end{lem}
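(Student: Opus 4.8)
The plan is to compute the operator $\mc D^{\mc B}$ explicitly on the basis $\{\ep_\tau\}_{\tau\in S_N}$ of $\V_{\bs 1}$, to read off the coefficients $\bar B_i(u)$, and then to recognize $\mc P^{\bar{\mc B}}$ as the asserted product. For the $\gln[t]$-action on $\V$ I would use the following bookkeeping: given a map $f\colon\{1,\dots,N\}\to\{1,\dots,N\}$, set $\ep_f=\ep_{f(1)}\otimes\dots\otimes\ep_{f(N)}$; these vectors form a basis of $V^{\otimes N}$, with $\ep_f\in\V_{\bs 1}$ precisely when $f$ is a bijection. Then $e_{ij}(u)\,\ep_f=\sum_{k\,:\,f(k)=j}(u-z_k)^{-1}\,\ep_{f_{k\to i}}$, where $f_{k\to i}$ agrees with $f$ away from $k$ and sends $k$ to $i$. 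In particular, on a weight-$\bs 1$ vector $\ep_\tau$ the diagonal series $e_{ii}(u)$ acts by the scalar $(u-z_{\tau^{-1}(i)})^{-1}$, while an off-diagonal series $e_{ij}(u)$, $i\ne j$, carries $\ep_\tau$ to $(u-z_{\tau^{-1}(j)})^{-1}$ times the vector, no longer of weight $\bs 1$, obtained by changing the label $j$ to $i$.

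Next I would expand the row determinant, writing $\mc D^{\mc B}=\on{rdet}\,\mc L(u)$ with $\mc L(u)_{ab}=\delta_{ab}(\partial-\la_a)-e_{ba}(u)$, as the signed sum over $\sigma\in S_N$ of the ordered products $\prod_{a=1}^N\mc L(u)_{a\sigma(a)}$, and apply each term to $\ep_\tau$. Two coupled effects must be tracked. First, the off-diagonal series making up a nontrivial cycle $(a_1\,a_2\,\dots\,a_m)$ of $\sigma$ compose by tracing a loop through the labels $a_1,\dots,a_m$ and return $\prod_{r=1}^m(u-z_{\tau^{-1}(a_r)})^{-1}\,\ep_{\sigma\tau}$ together with ``parasitic'' terms, supported on $\ep_{\rho\tau}$ for proper subpermutations $\rho$ of $\sigma$ and carrying higher powers of the factors $(u-z_{\tau^{-1}(\cdot)})^{-1}$. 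Second, the operators $\partial$ occurring in the fixed-point factors $\partial-\la_i-e_{ii}(u)$ differentiate the $u$-dependence that the factors to their right have produced, whether those factors are diagonal or off-diagonal. The off-diagonal series belonging to two different cycles commute with each other, which helps organize the first effect, but the $\partial$'s couple everything together.

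The main obstacle, and the real content of the lemma, is to show that after summing over all $\sigma$ these two families of extra terms cancel one another, so that $\mc D^{\mc B}\ep_\tau$ acquires a closed form in which the coefficient of each $\partial^{N-i}$ is assembled only from the principal data: the loop factors $\prod_r(u-z_{\tau^{-1}(a_r)})^{-1}$ attached to $\ep_{\sigma\tau}$ for the nontrivial cycles, and the scalars $-\la_i-(u-z_{\tau^{-1}(i)})^{-1}$ attached to the fixed points. Already for $N=2$ this is the cancellation of the $(u-z_{\tau^{-1}(2)})^{-2}\ep_\tau$ produced by $e_{21}(u)e_{12}(u)\ep_\tau$ against the term of the same shape coming from $\partial$ in $(\partial-\la_1-e_{11}(u))(\partial-\la_2-e_{22}(u))\ep_\tau$. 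I would prove the general cancellation by induction on $N$: expand the row determinant along the first row, peel off the cycle of $\sigma$ through the index $1$, and reduce the inductive step to a telescoping identity among rational functions of $u$; alternatively it can be deduced from a Capelli-type identity in the spirit of \cite{capelli}. Once $\mc D^{\mc B}\ep_\tau$ is in closed form, the polynomiality of $\mc P^{\bar{\mc B}}$ recorded in \cite{capelli} guarantees that multiplication by $w(u,\bs z)=\prod_i(u-z_i)$ clears all denominators, and replacing each $\partial^{N-i}$ by $v^{N-i}$ turns the expression into exactly $(-1)^N\sum_{\sigma\in S_N}(-1)^\sigma\prod_{i\,:\,\sigma(i)=i}\bigl(1-(u-z_{\tau^{-1}(i)})(v-\la_i)\bigr)\,\ep_{\sigma\tau}$.

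The commutation assertions are then immediate. Each $e_{ij}(u)$ carries the scalar factor $p(\bs z,\bs\la)$ through unchanged, and $\partial$ differentiates only in $u$, so the whole construction of $\mc D^{\mc B}$ is $\C[\bs z,\bs\la]$-linear. Hence every $\bar B_{ij}$, and in turn every $\bar b_{ij}$ --- whose definition brings in only the additional $\C[\bs z]$-coefficients of $w(u,\bs z)$ --- commutes with multiplication by each $z_s$ and each $\la_s$; the commutation with the $\la_s$ is moreover already contained in Lemma \ref{bethe}, and all of these commutations can equally be read off the formula just established.
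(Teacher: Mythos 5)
Your argument is correct and is essentially the paper's: expand the row determinant on the basis $\ep_\tau$, keep only the principal simple-pole contributions, and assemble the product over the fixed points of $\sigma$. The only divergence is at the crux you rightly isolate --- the cancellation of the higher-order poles against the terms created by normal-ordering the $\partial$'s: the paper does not prove this but quotes it as the computational recipe of \cite{capelli} (expand the $\partial$-free determinant with entries $\sum_i e^{(i)}_{jk}/(u-z_i)-\delta_{jk}(v-\la_j)$, discard every term with a pole of order greater than one, then multiply by $w(u,\bs z)$), so your fallback of deducing the cancellation from the Capelli-type identity of \cite{capelli} is exactly what is done there, while your primary route (induction on $N$ with a telescoping identity) is left as a sketch and would have to be carried out to make the proof self-contained.
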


\begin{proof}
Following \cite{capelli}, to compute the polynomial
$(-1)^N {\mc P}^{\bar B}$ we have to
consider the determinant
\be
\on{rdet}
\left( \begin{matrix}
\sum_{i=1}^N \frac{e_{11}^{(i)}}{u-z_i}-(v-\la_1) & \sum_{i=1}^N
\frac{e_{21}^{(i)}}{u-z_i}& \dots &
\sum_{i=1}^N \frac{e_{N1}^{(i)}}{u-z_i}\\
\sum_{i=1}^N \frac{e_{12}^{(i)}}{u-z_i} &
\sum_{i=1}^N \frac{e_{22}^{(i)}}{u-z_i}-(v-\la_2)& \dots &
\sum_{i=1}^N \frac{e_{N2}^{(i)}}{u-z_i}\\
\dots & \dots &\dots &\dots \\
\sum_{i=1}^N \frac{e_{1N}^{(i)}}{u-z_i} &
\sum_{i=1}^N \frac{e_{2N}^{(i)}}{u-z_i}& \dots &
\sum_{i=1}^N \frac{e_{NN}^{(i)}}{u-z_i} - (v-\la_N)
\end{matrix}\right),
\ee
expand it, ignore all the terms with poles of order higher
than one, compute the action of $e_{jk}^{(i)}$,
and finally multiply by $w(u,\bs z)$. Here
$e_{jk}^{(i)}$ denote the operators $e_{jk}$
acting on the $i$-th factor in $V^{\otimes N}$.

The terms $A_\si$ in the expansion are labeled by permutations $\si\in S_N$,
see \Ref{rdet}, and
\be
A_\si=(e_{\si(1)1}(u)-\delta_{\si(1)1}(v-\la_1))
(e_{\si(2)2}(u)-\delta_{\si(2)2}(v-\la_2))\dots
(e_{\si(N)N}(u)-\delta_{\si(N)N}(v-\la_N)).
\ee
Note also that
\be
e_{ij}(u)\ep_\tau=\frac{e^{(\tau^{-1}(j))}_{ij}}{u-z_{\tau^{-1}(j)}}\ \ep_\tau.
\ee
Therefore
\be
A_\si \ep_\tau=\prod_{i,\
\sigma(i)=i}\left(\frac{1}{u-z_{\tau^{-1}(i)}}-
(v-\la_i)\right)\ \ep_{\si\tau}+\dots,
\ee
where the dots denote the terms with at least one pole in $u$
of order greater than $1$.

The lemma follows.
\end{proof}

\subsection{The Bethe algebra and the center of the Cherednik algebra}
We identify the space $\V_{\bs 1}$ with $H_N$ as follows. Let $\iota$
be the isomorphism of vector spaces given by
\be
\iota:\ \V_{\bs 1}\to H_N, \qquad p(\bs z)q(\bs \la) \ep_\tau
\mapsto q(\bs x)\tau p(\bs y),
\ee
for all $\tau\in S_N$ and all polynomials $p, q$.

The map $\iota$ identifies the action of the Bethe
algebra $\bar{\mc B}_N$ in $\V_{\bs 1}$ with the action of the
center $\mc Z_N$ in $H_N$. Namely, we have the following theorem.
\begin{thm}\label{Z=B}
We have
\be
\iota \ {\mc P}^{\bar{\mc B}}= \mc P^{\mc Z} \iota .
\ee
\end{thm}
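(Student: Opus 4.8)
My plan is to check the identity on a $\C$-basis of $\V_{\bs 1}$ and to reduce it to the action on the basis vectors $\ep_\tau$. Since ${\mc P}^{\bar{\mc B}}$ is a polynomial in $u,v$ with coefficients in $\on{End}(\V_{\bs 1})$, the identity $\iota\,{\mc P}^{\bar{\mc B}}={\mc P}^{\mc Z}\iota$ is an equality of $\C$-linear maps $\V_{\bs 1}\to H_N[u,v]$, so it suffices to verify it on the elements $p(\bs z)q(\bs\la)\ep_\tau$, where $\tau\in S_N$ and $p,q$ are monomials in the $z$'s, resp.\ the $\la$'s. First I would record the intertwining properties of $\iota$: from its definition together with the relations $[x_i,x_j]=[y_i,y_j]=0$ in $H_N$ one gets $\iota(\la_s\,\xi)=x_s\,\iota(\xi)$ and $\iota(z_s\,\xi)=\iota(\xi)\,y_s$ for all $\xi\in\V_{\bs 1}$, hence $\iota\bigl(p(\bs z)q(\bs\la)\,\xi\bigr)=q(\bs x)\,\iota(\xi)\,p(\bs y)$. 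By Lemma~\ref{comp bethe} the operators $\bar b_{ij}$, and therefore ${\mc P}^{\bar{\mc B}}$, commute with multiplication by $z_s$ and $\la_s$; by Theorem~\ref{central thm} the coefficients of ${\mc P}^{\mc Z}$ are central in $H_N$. Putting these together,
\[
\iota\bigl({\mc P}^{\bar{\mc B}}\,p(\bs z)q(\bs\la)\ep_\tau\bigr)=q(\bs x)\,\iota\bigl({\mc P}^{\bar{\mc B}}\ep_\tau\bigr)\,p(\bs y),
\qquad
{\mc P}^{\mc Z}\,\iota\bigl(p(\bs z)q(\bs\la)\ep_\tau\bigr)=q(\bs x)\,\bigl({\mc P}^{\mc Z}\tau\bigr)\,p(\bs y),
\]
so the theorem reduces to the single identity $\iota\bigl({\mc P}^{\bar{\mc B}}\ep_\tau\bigr)={\mc P}^{\mc Z}\tau$ for every $\tau\in S_N$.

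To prove this I would compute both sides explicitly. By Lemma~\ref{comp bethe}, expanding each coefficient of ${\mc P}^{\bar{\mc B}}\ep_\tau$ over subsets $S$ of the set of fixed points of $\si$,
\[
{\mc P}^{\bar{\mc B}}\ep_\tau=(-1)^N\sum_{\si\in S_N}(-1)^\si\sum_{S}(-1)^{|S|}\Bigl(\prod_{i\in S}(u-z_{\tau^{-1}(i)})\Bigr)\Bigl(\prod_{i\in S}(v-\la_i)\Bigr)\ep_{\si\tau}.
\]
Now I apply $\iota$ term by term via $\iota\bigl(p(\bs z)q(\bs\la)\ep_\rho\bigr)=q(\bs x)\,\rho\,p(\bs y)$: the $\la$-factor becomes $\prod_{i\in S}(v-x_i)$ to the left of $\si\tau$, and the $z$-factor becomes $\prod_{i\in S}(u-y_{\tau^{-1}(i)})$ to the right of $\si\tau$. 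Since every $i\in S$ is fixed by $\si$ we have $(\si\tau)\bigl(\tau^{-1}(i)\bigr)=\si(i)=i$, so pushing $\si\tau$ to the right past $\prod_{i\in S}(u-y_{\tau^{-1}(i)})$ turns this factor into $\prod_{i\in S}(u-y_i)$; therefore
\[
\iota\bigl({\mc P}^{\bar{\mc B}}\ep_\tau\bigr)=(-1)^N\sum_{\si\in S_N}(-1)^\si\sum_{S}(-1)^{|S|}\Bigl(\prod_{i\in S}(v-x_i)\Bigr)\Bigl(\prod_{i\in S}(u-y_i)\Bigr)\si\tau.
\]
On the other hand, unwinding the definition of ${\mc P}^{\mc Z}$ and using that
$:\!\prod_{i\,:\,\si(i)=i}\bigl(1-(v-x_i)(u-y_i)\bigr)\!:\,=\sum_{S}(-1)^{|S|}\bigl(\prod_{i\in S}(v-x_i)\bigr)\bigl(\prod_{i\in S}(u-y_i)\bigr)$,
which is already written with all $x$'s preceding all $y$'s, multiplication by $\tau$ on the right yields exactly the same expression. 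Hence $\iota\bigl({\mc P}^{\bar{\mc B}}\ep_\tau\bigr)={\mc P}^{\mc Z}\tau$, and the theorem follows.

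I expect the computation of $\iota\bigl({\mc P}^{\bar{\mc B}}\ep_\tau\bigr)$ to be the main obstacle. The map $\iota$ is not an algebra homomorphism, so after expanding ${\mc P}^{\bar{\mc B}}\ep_\tau$ one must carefully place the permutation $\si\tau$ between the normally ordered $x$-part coming from the $\la$-variables and the $y$-part coming from the $z$-variables, and then notice that commuting $\si\tau$ across the $y$-part replaces each shifted index $\tau^{-1}(i)$ by $i$ — which works precisely because the indices in $S$ are fixed points of $\si$. The remaining ingredients are formal: the reduction to $\ep_\tau$, the commutation of ${\mc P}^{\bar{\mc B}}$ with the multiplication operators (Lemma~\ref{comp bethe}), and the centrality of the coefficients of ${\mc P}^{\mc Z}$ (Theorem~\ref{central thm}).
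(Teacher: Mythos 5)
Your proof is correct and follows essentially the same route as the paper's: reduce to the vectors $p(\bs z)q(\bs \la)\ep_\tau$ using the commutation of the $\bar b_{ij}$ with the multiplication operators (Lemma \ref{comp bethe}) and the centrality of the $c_{ij}$ (Theorem \ref{central thm}), then compute the left-hand side via Lemma \ref{comp bethe} and match it with $\mc P^{\mc Z}\tau$ by moving $\si\tau$ past the $y$-factors, using $\si\tau\, y_{\tau^{-1}(i)}=y_{\si(i)}\,\si\tau=y_i\,\si\tau$ for fixed points $i$ of $\si$. You have merely made explicit the expansion over subsets $S$ and the normal ordering that the paper leaves implicit in its final display.
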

\begin{proof}
It is sufficient to check the equality in the theorem on
elements of $\V_{\bs 1}$ of the form $p(\bs z)q(\bs \la)\ep_{\tau}$,
where $\tau\in S_N$ and $p,q$ are polynomials.

The left hand
side, $\iota \ {\mc P}^{\bar{\mc B}}(p(\bs z)q(\bs \la)\ep_{\tau})$,
is computed using Lemma 2.8.

For the right hand side, we have
\be
\mc P^{\mc Z} \iota (p(\bs z)q(\bs \la)\ep_{\tau})=\mc P^{\mc Z} q(\bs x)
\tau p(\bs y)=q(\bs x)\mc P^{\mc Z }\tau p(\bs y),
\ee
where in the second equality we used that $\mc P^{\mc Z}$ is
central by Theorem \ref{central thm}.

Furthermore, if $\si(i)=i$ then
\be
q(\bs x) (v-x_i)(u-y_i)\sigma \tau p(\bs y)=
q(\bs x)(v-\la_i)\si \tau (u-y_{\tau^{-1}(i)})p(\bs y).
\ee

The theorem follows.
\end{proof}

\begin{cor}
There exists a
unique isomorphism of algebras
$\tau_{BZ}:\ \bar{\mc B}_N\to \mc Z_N$ which maps
$\bar b_{ij}$ to $c_{ij}$.
\qed
\end{cor}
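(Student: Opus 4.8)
The plan is to read the corollary straight off Theorem \ref{Z=B}. First I would compare the two sides of the identity $\iota\,{\mc P}^{\bar{\mc B}} = \mc P^{\mc Z}\iota$ coefficient by coefficient: since ${\mc P}^{\bar{\mc B}}$ and $\mc P^{\mc Z}$ are genuine polynomials in $u$ and $v$, reading off the coefficient of $u^{N-i}v^{N-j}$ yields the operator identities
\be
\iota\,\bar b_{ij} = c_{ij}\,\iota,\qquad i,j=0,1\lc N,
\ee
as maps $\V_{\bs 1}\to H_N$, where on the right $c_{ij}$ denotes the operator of left multiplication by the element $c_{ij}\in\mc Z_N\subset H_N$. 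Equivalently, $\iota\,\bar b_{ij}\,\iota^{-1} = L_{c_{ij}}$, with $L_h\in\on{End}(H_N)$ the operator of left multiplication by $h$.

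Next I would package this through two algebra maps. Conjugation by the linear isomorphism $\iota$ is an algebra isomorphism $\Phi\colon\on{End}(\V_{\bs 1})\to\on{End}(H_N)$, $\Phi(a)=\iota a\iota^{-1}$; and $h\mapsto L_h$ is an algebra homomorphism $L\colon H_N\to\on{End}(H_N)$ which is injective because $H_N$ is unital. By the Lemma just before the corollary, $\bar{\mc B}_N$ is generated by the $\bar b_{ij}$, so $\Phi$ carries $\bar{\mc B}_N$ isomorphically onto the subalgebra $\langle L_{c_{ij}}\rangle\subset\on{End}(H_N)$; by Theorem \ref{central thm}, $\mc Z_N$ is generated by the $c_{ij}$, so $L$ carries $\mc Z_N$ isomorphically onto that same subalgebra $\langle L_{c_{ij}}\rangle$. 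Composing, $\tau_{BZ}:=(L|_{\mc Z_N})^{-1}\circ(\Phi|_{\bar{\mc B}_N})$ is an algebra isomorphism $\bar{\mc B}_N\to\mc Z_N$ with $\tau_{BZ}(\bar b_{ij})=c_{ij}$.

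For uniqueness I would simply observe that any algebra homomorphism out of $\bar{\mc B}_N$ is determined by its values on the generators $\bar b_{ij}$, so the isomorphism sending $\bar b_{ij}\mapsto c_{ij}$ is unique. The argument is essentially bookkeeping once Theorem \ref{Z=B} is in hand; the only two points I would make sure to check are that extracting $u,v$-coefficients from the universal polynomials is legitimate (both are honest polynomials, not merely formal series, in $u$ and $v$) and that the left-multiplication representation of $H_N$ is faithful, which is what lets me identify the operator subalgebra $\langle L_{c_{ij}}\rangle$ with $\langle c_{ij}\rangle=\mc Z_N$.
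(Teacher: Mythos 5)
Your argument is correct and is exactly the reasoning the paper leaves implicit (the corollary is stated there without proof, as an immediate consequence of Theorem \ref{Z=B}): extract the $u,v$-coefficients of $\iota\,{\mc P}^{\bar{\mc B}}=\mc P^{\mc Z}\iota$, use that the $\bar b_{ij}$ generate $\bar{\mc B}_N$ and that the $c_{ij}$ generate $\mc Z_N$ by Theorem \ref{central thm}, and identify $\mc Z_N$ with its image in $\on{End}(H_N)$ under the faithful left-regular representation. The only caveat is notational: the paper writes $\mc P^{\mc Z}=\sum c_{ij}v^{N-i}u^{N-j}$ but ${\mc P}^{\bar{\mc B}}=\sum\bar b_{ij}u^{N-i}v^{N-j}$, so literal coefficient matching pairs $\bar b_{ij}$ with $c_{ji}$; this harmless index transposition is an artifact of the paper's conventions and does not affect your argument.
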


\section{Remarks on Theorem \ref{Z=B}}\label{sec cor}
\subsection{A construction of a commutative algebra}\label{comm sec}
We describe a useful way to construct commutative subalgebras
from the center of an algebra.

Let $A$ be an algebra and let $\mc Z_A\subset A$ be the center of $A$.
Assume that $A_{+}, A_{-}\subset A$ are subalgebras of $A$ such that
$A=A_{+}A_{-}$. By that we mean that the multiplication map
$A_+\otimes A_-\to A$ is an isomorphism of vector spaces.

Let $A_{+}^{op}$ be the algebra $A_{+}$ with the opposite
multiplication: $A_{+}^{op}=A_{+}$ as vector spaces and the
multiplication map $A_+^{op}\otimes A_+^{op}\to A_+^{op}$ sends
$a_{+}\otimes b_{+}$ to $b_{+}a_{+}$.

We have a unique isomorphism of vector spaces defined by
\be
\al: A\to A_{+}^{op}\otimes A_{-}, \qquad
a_{+}a_{-} \mapsto a_{+}\otimes a_{-},
\ee
for all $a_+\in A_+, a_-\in A_-$.

\begin{lem}\label{elementary}
The algebra $\al(\mc Z_{A})$ is a commutative subalgebra of
$A_{+}^{op}\otimes A_{-}$ isomorphic to $\mc Z_A$.
\end{lem}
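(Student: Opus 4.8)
The plan is to reduce everything to the single identity $\al(zw)=\al(z)\,\al(w)$ for $z,w\in\mc Z_A$, where the product on the right is taken in $A_+^{op}\otimes A_-$; once this is established, all three assertions of the lemma follow formally.

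First I would dispose of the routine parts. Since $\al$ is an isomorphism of vector spaces by construction, its restriction to the subspace $\mc Z_A$ is again a linear isomorphism, now onto $\al(\mc Z_A)$; moreover $1\in A_+\cap A_-$ and $1=1\cdot 1$, so $\al(1)=1\otimes 1$, the unit of $A_+^{op}\otimes A_-$. Granting the multiplicativity identity and using $zw\in\mc Z_A$, it then follows that $\al(\mc Z_A)$ is a unital subalgebra of $A_+^{op}\otimes A_-$, that $\al|_{\mc Z_A}$ is a bijective algebra homomorphism onto it and hence an isomorphism of algebras, and --- since $\mc Z_A$ is commutative, so $zw=wz$ --- that $\al(\mc Z_A)$ is commutative.

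The substance is therefore the multiplicativity of $\al$ on $\mc Z_A$, which I would verify by direct computation. Using $A=A_+A_-$, write $z=\sum_k z_+^{(k)}z_-^{(k)}$ and $w=\sum_l w_+^{(l)}w_-^{(l)}$ with all $z_\pm^{(k)},w_\pm^{(l)}\in A_\pm$, so that $\al(z)=\sum_k z_+^{(k)}\otimes z_-^{(k)}$ and $\al(w)=\sum_l w_+^{(l)}\otimes w_-^{(l)}$. Multiplying in $A_+^{op}\otimes A_-$, where the first tensor factors multiply in the reversed order, gives
\be
\al(z)\,\al(w)=\sum_{k,l}\bigl(w_+^{(l)}z_+^{(k)}\bigr)\otimes\bigl(z_-^{(k)}w_-^{(l)}\bigr).
\ee
Now apply the multiplication map $m\colon A_+\otimes A_-\to A$, which coincides with $\al^{-1}$ as a map of vector spaces; using $\sum_k z_+^{(k)}z_-^{(k)}=z$ one obtains
\be
m\bigl(\al(z)\,\al(w)\bigr)=\sum_{k,l}w_+^{(l)}z_+^{(k)}z_-^{(k)}w_-^{(l)}=\sum_l w_+^{(l)}\,z\,w_-^{(l)}.
\ee
As $z$ is central it commutes past each $w_+^{(l)}$, so the right side equals $z\sum_l w_+^{(l)}w_-^{(l)}=zw=m(\al(zw))$. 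Since $m$ is injective, $\al(z)\,\al(w)=\al(zw)$, which completes the argument.

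The only place calling for care --- and the only place the hypotheses genuinely enter --- is this computation: one must keep track of the reversed multiplication in the left slot of $A_+^{op}\otimes A_-$, and one must invoke the centrality of $z$ to pull it out of $\sum_l w_+^{(l)}\,z\,w_-^{(l)}$. I do not anticipate any real obstacle beyond this bookkeeping; the lemma is as elementary as its name suggests.
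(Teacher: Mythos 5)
Your proof is correct and follows essentially the same route as the paper: both arguments pull the product in $A_+^{op}\otimes A_-$ back to $A$ via the multiplication map $m=\al^{-1}$ and use centrality to rewrite $\sum_l w_+^{(l)}\,z\,w_-^{(l)}$ as $zw$. Your packaging of the computation as the multiplicativity identity $\al(zw)=\al(z)\al(w)$ is a mild improvement, since it directly delivers the ``subalgebra'' and ``isomorphic to $\mc Z_A$'' claims, which the paper's version (a direct verification that $\al^{-1}([\al(a),\al(b)])=0$) leaves implicit.
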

\begin{proof}
Let $a=\sum_{i} a_{+}^{(i)}a_-^{(i)}$ and $b=\sum_{j}
b_{+}^{(j)}b_-^{(j)}$ be elements of the center $\mc Z_A$. Here
$a_\pm^{(i)},b_\pm^{(j)}\in A_\pm$. We have
\bea
\al^{-1}([\al(a),\al(b)])
=\sum_{ij}([b_+^{(j)},a_+^{{(i)}}]a_-^{(i)}b_-^{(j)}+
a_+^{(i)},b_+^{(j)}[a_-^{(i)}b_-^{(j)}])=\\
\sum_{ij}(-a_+^{(i)}[b_+^{(j)},a_-^{(i)}]b_-^{(j)}+
a_+^{(i)},b_+^{(j)}[a_-^{(i)}b_-^{(j)}])=0.
\eea
Here the first equality follows from the definitions, the second
from the centrality of $a$ and the third one from the centrality of $b$.
\end{proof}

\begin{rem}
The idea of this construction can be eventually traced back to Kostant-Adler
method in the theory of integrable systems, see \cite{K}, \cite{ReS}.
A similar idea
in a disguised form is involved in the factorization method, see \cite{RS} and
the construction of higher Gaudin Hamiltonians, see \cite{FFR}, \cite{FFRb}.
\end{rem}

\subsection{Another form of Theorem \ref{Z=B}}\label{another sec}
We interpret Theorem \ref{Z=B} as a coincidence of
two natural commutative subalgebras in the algebra
$\C[\bs x]\otimes (\C[\bs y]\ltimes \C[S_N^{y}])$.

Let $A=H_N$, $A_+=\C[\bs x]=A_+^{op}$ and $A_-=\C[\bs y]\ltimes
\C[S_N^y]$. Then by Lemma \ref{elementary} we have a commutative
subalgebra $\al(\mc Z_N)\subset \C[\bs x]\otimes(\C[\bs y]\ltimes
\C[S_N^y])$.

\medskip
Let
\be
U_{\bs 0}=\{g\in U(\gln[t])[\bs \la]\ |\
\ ge_{ii}=e_{ii}g,\ i=1\lc N\}
\ee
be the subalgebra of $U(\gln[t])[\bs \la]$ of $\gln$-weight
$\bs 0=(0\lc 0)$. Note that $\mc B_N\subset U_{\bs 0}$.

If $M$ is a $\gln[t]$-module, then
$U_{\bs 0}$ preserves the $\gln$-weight decomposition of $M$. In particular,
$U_{\bs 0}$ acts on $\V_{\bs 1}$.

Identify the space $\V_{\bs 1}$ with the space
$\C[\bs x]\otimes(\C[\bs y]\ltimes \C{S_N^y})$
by the linear isomorphism $\tilde \iota$:
\be
\tilde \iota:\ \V_{\bs 1}\to \C[\bs x]\otimes(\C[\bs y]\ltimes \C{S_N^y}),
\qquad p(\bs z)q(\bs \la) \ep_\tau
\mapsto q(\bs x)\otimes (\tau p(\bs y)),
\ee
for all $\tau\in S_N$ and all polynomials $p,q$.

\begin{lem}\label{second way}
The map $\tilde \iota$ identifies the image of the algebra $U_{\bs
0}$ in $\on{End} (\V_{\bs 1})$ with the algebra $\C[\bs
x]\otimes(\C[\bs y]\ltimes \C{S_N^y})$ acting by left
multiplications.
\end{lem}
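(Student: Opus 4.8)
The plan is to show both inclusions of algebras inside $\on{End}(\V_{\bs 1})$, after transporting everything to $\C[\bs x]\otimes(\C[\bs y]\ltimes\C S_N^y)$ via $\tilde\iota$. First I would unwind how a general $g\in U_{\bs 0}$ acts on a vector $p(\bs z)q(\bs\la)\ep_\tau$ under the action \Ref{action}: the elements $e_{ii}$ act diagonally (so they are responsible for the weight condition), multiplication by $\la_s$ acts by multiplication by $\la_s$, and the remaining generators $e_{ij}\otimes t^r$ with $i\ne j$ act by a combination of a permutation of the tensor factors (changing $\ep_\tau$ to $\ep_{s_{ij}\tau}$-type vectors, i.e.\ a transposition on the $S_N^y$ side after applying $\tilde\iota$) together with multiplication by a Laurent polynomial in $u$ whose coefficients are polynomials in $\bs z$ (these become polynomials in $\bs y$). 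The key bookkeeping point is that, under $\tilde\iota$, the variables $z_s$ are sent to $y_s$ and appear only through left multiplication, while $\la_s$ is sent to $x_s$ and likewise appears only through left multiplication; the permutation of tensor factors becomes left multiplication by a transposition in $S_N^y$. Hence every generator of $U_{\bs 0}$, and therefore all of $U_{\bs 0}$, acts on $\V_{\bs 1}$ by an operator whose $\tilde\iota$-conjugate is left multiplication by an element of $\C[\bs x]\otimes(\C[\bs y]\ltimes\C S_N^y)$. This gives the inclusion ``image of $U_{\bs 0}$'' $\subseteq$ ``left multiplications''.

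For the reverse inclusion I must exhibit enough elements of $U_{\bs 0}$ whose images generate the whole algebra $\C[\bs x]\otimes(\C[\bs y]\ltimes\C S_N^y)$ acting by left multiplication. Left multiplication by $x_s$ is realized by multiplication by $\la_s$, which lies in $U_{\bs 0}$. Left multiplication by $y_s$ should come from a suitable element built out of the $e_{ij}(u)$: one natural candidate is a coefficient of $\tr(e(u))=\sum_i e_{ii}(u)$ or, more precisely, of $\sum_i z_i\text{-weighted}$ combinations; concretely, acting by $e_{ii}(u)$ on $\ep_\tau$ produces $\sum_j \delta_{\tau(j),i}(u-z_j)^{-1}\ep_\tau$, and by taking appropriate residues/combinations over $i$ one recovers multiplication by each $z_j=y_j$ after applying $\tilde\iota$. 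Finally, left multiplication by a transposition $s_{ab}^y$ must be produced from off-diagonal $e_{ij}\otimes t^r$: acting with $e_{ab}(u)e_{ba}(u)$ (or a similar short word) on $\ep_\tau$ swaps the two relevant tensor slots up to rational factors in $u$ and $z$, and extracting the right Laurent coefficient isolates the bare transposition. Since $\C[\bs x]$, $\C[\bs y]$ and $\C S_N^y$ together generate $\C[\bs x]\otimes(\C[\bs y]\ltimes\C S_N^y)$ as an algebra, and each generator's left-multiplication action is in the image of $U_{\bs 0}$, the image equals the full left-multiplication algebra.

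The step I expect to be the main obstacle is the second inclusion, specifically producing the transpositions $s_{ab}^y$ cleanly: the off-diagonal elements $e_{ij}(u)$ act with genuine poles in $u$ at the points $z_k$, and a product like $e_{ab}(u)e_{ba}(u)$ mixes the desired ``swap'' term with lower-order pole terms and with diagonal contributions, so one has to argue that an explicit linear combination of Laurent coefficients (or of values/residues at the $z_k$, treated formally) isolates exactly the transposition with coefficient $1$ and kills everything else. This is the same type of ``triangularity in the pole order'' argument used in Lemma~\ref{comp bethe}, and I would model the computation on that lemma. Once the generators are in hand, closing the argument is immediate: $\tilde\iota$ is a linear isomorphism by construction, it intertwines the two algebra actions on generators, hence on the algebras they generate, which proves the stated identification.
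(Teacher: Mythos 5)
There is a genuine gap in your first inclusion (image of $U_{\bs 0}$ $\subseteq$ left multiplications). You propose to verify it on ``generators'' $e_{ij}\otimes t^r$, but for $i\ne j$ these elements are not in $U_{\bs 0}$ (they do not commute with the $e_{kk}$) and do not even preserve $\V_{\bs 1}$: $e_{ij}^{(k)}$ replaces $\ep_j$ by $\ep_i$ in one tensor slot, which leaves the weight-$(1\lc1)$ subspace. Only weight-zero \emph{monomials} in these elements act on $\V_{\bs 1}$, and $U_{\bs 0}$ is spanned by such monomials, so a ``check the generators'' argument does not get off the ground. There is also a left/right confusion in your key bookkeeping step: under $\tilde\iota$, multiplication by $z_s$ is \emph{right} multiplication by $y_s$ (since $\tilde\iota(z_s\,p(\bs z)q(\bs\la)\ep_\tau)=q(\bs x)\otimes\tau\, y_s\,p(\bs y)$), not left multiplication. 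What acts as left multiplication by $y_j$ is the $\tau$-twisted operator ``multiplication by $z_{\tau^{-1}(j)}$ on the $\ep_\tau$-component,'' which is exactly how $e_{jj}\otimes t$ acts; this twist is the whole point, and your write-up does not track it. The paper avoids all of this with a commutant argument: multiplication by $\la_i$, multiplication by $z_j$, and the simultaneous permutation of $z$-variables and tensor factors correspond under $\tilde\iota$ to \emph{right} multiplications by $x_i$, $y_j$, $s_{ij}$; the algebra $U_{\bs 0}$ visibly commutes with these; and the commutant of the right regular representation of $\C[\bs x]\otimes(\C[\bs y]\ltimes\C S_N^y)$ on itself is precisely the algebra of left multiplications.

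Your second inclusion is right in spirit but needlessly complicated, and the step you flag as the main obstacle does not arise. No residue or Laurent-coefficient extraction is needed: the paper exhibits $\la_i$, $e_{jj}\otimes t$, and the zero-mode product $e_{kl}e_{lk}$ as elements of $U_{\bs 0}$ realizing left multiplication by $x_i$, $y_j$, and $s_{kl}$ (the last up to adding the identity, since $e_{kl}e_{lk}\ep_\tau=\ep_\tau+\ep_{s_{kl}\tau}$), and these generate the target algebra. If you repair the first inclusion --- either by adopting the commutant argument or by honestly computing the action of an arbitrary weight-zero monomial and tracking the $\tau^{-1}$-twist --- the rest of your outline closes.
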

\begin{proof}
The right multiplication by $x_i$ and $y_j$ correspond to
multiplications by $\la_i$ and $z_j$ in the space $\V_{\bs 1}$. The
right multiplication by $s_{ij}$ corresponds to switching the $i$-th
with the $j$-th factors and $z_i$ with $z_j$ in the space $\V_{\bs
1}$. Clearly, the algebra $U_{\bs 0}$ commutes with all these
operators. Therefore, the map $\tilde \iota$ identifies the image of
the algebra $U_{\bs 0}$ in $\on{End} (\V_{\bs 1})$ with a subalgebra
of left multiplications in $\C[\bs x]\otimes(\C[\bs y]\ltimes
\C{S_N^y})$.

Note that $\tilde \iota$ identifies the operators
$\la_i,e_{jj}\otimes t, e_{kl}e_{lk}\in U_{\bs 0}$ with the left
multiplications by $x_i, y_j, s_{kl}$, respectively. Since
$x_i,y_j,s_{kl}$ generate the $\C[\bs x]\otimes(\C[\bs y]\ltimes
\C{S_N^y})$, the lemma is proved.
\end{proof}
In particular, by Lemma \ref{second way} the image of the Bethe
subalgebra $\tilde\iota(\mc B_N)$ is a commutative subalgebra of
$\C[\bs x]\otimes(\C[\bs y]\ltimes \C{S_N^y})$. \medskip

Theorem \ref{Z=B} is equivalent to the following.
\begin{cor}
The subalgebras $\al(\mc Z_N)$ and $\tilde\iota(\mc B_N)$ of the
algebra $\C[\bs x]\otimes(\C[\bs y]\ltimes \C{S_N^y})$ coincide.
\qed
\end{cor}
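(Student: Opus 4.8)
The plan is to obtain the corollary directly from Theorem \ref{Z=B}: the real content has already been proved there, and what remains is to match up the two identifications of $\V_{\bs 1}$ occurring in the two statements. First I would check that $\tilde\iota=\al\circ\iota$, where $\al$ is the isomorphism of Section \ref{comm sec} for $A=H_N$, $A_+=\C[\bs x]$, $A_-=\C[\bs y]\ltimes\C S_N^y$. This is a direct comparison of definitions: the normal ordering isomorphism expresses $H_N$ as $\C[\bs x]\cdot A_-$, and the resulting map $q(\bs x)\,\tau\,p(\bs y)\mapsto q(\bs x)\otimes(\tau p(\bs y))$ is exactly $\al$ (here $\C[\bs x]^{op}=\C[\bs x]$, since $\C[\bs x]$ is commutative); since $\iota\bigl(p(\bs z)q(\bs\la)\ep_\tau\bigr)=q(\bs x)\,\tau\,p(\bs y)$, composing gives $\tilde\iota$.

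Next I would record how the two subalgebras are generated. By Lemma \ref{elementary}, $\al(\mc Z_N)$ is a subalgebra isomorphic to $\mc Z_N$, so by Theorem \ref{central thm} it is generated by the coefficients of the polynomial $\al(\mc P^{\mc Z})$, i.e. by the elements $\al(c_{ij})$. On the other side, by Lemma \ref{second way} the algebra $\mc B_N\subset U_{\bs 0}$ acts on $\V_{\bs 1}$ through left multiplications in $\C[\bs x]\otimes(\C[\bs y]\ltimes\C S_N^y)$, so $\tilde\iota(\mc B_N)$ is the subalgebra corresponding to $\bar{\mc B}_N\subset\on{End}(\V_{\bs 1})$; since $\bar{\mc B}_N$ is generated by the operators $\bar b_{ij}$, the subalgebra $\tilde\iota(\mc B_N)$ is generated by the elements $L_{ij}$ representing these, and a left multiplication operator is recovered from its value on the unit vector: $L_{ij}=\tilde\iota\bigl(\bar b_{ij}\,\ep_{id}\bigr)$, where $\tilde\iota(\ep_{id})=1\otimes id$. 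Thus $\sum_{i,j}L_{ij}u^{N-i}v^{N-j}=\tilde\iota\bigl(\mc P^{\bar{\mc B}}\,\ep_{id}\bigr)$.

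The two generating polynomials are now equated by one application of Theorem \ref{Z=B}. Evaluating the operator identity $\iota\,\mc P^{\bar{\mc B}}=\mc P^{\mc Z}\iota$ on $\ep_{id}$ and using $\iota(\ep_{id})=1_{H_N}$ gives $\iota\bigl(\mc P^{\bar{\mc B}}\ep_{id}\bigr)=\mc P^{\mc Z}$. Applying $\al$ coefficientwise and using $\tilde\iota=\al\iota$, we get
\be
\tilde\iota\bigl(\mc P^{\bar{\mc B}}\,\ep_{id}\bigr)\,=\,\al\bigl(\mc P^{\mc Z}\bigr).
\ee
Comparing coefficients of the monomials $u^{N-i}v^{N-j}$ identifies the set $\{L_{ij}\}$ with the set $\{\al(c_{ij})\}$ (up to the transposition of indices built into the conventions for $\mc P^{\bar{\mc B}}$ and $\mc P^{\mc Z}$), so $\tilde\iota(\mc B_N)$ and $\al(\mc Z_N)$ have the same set of generators and hence coincide.

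I do not expect a genuine obstacle here — the corollary is, after all, advertised as equivalent to Theorem \ref{Z=B}. The points that require care are: recognising that $\tilde\iota$ factors through $\al$, which is the one place where the normal ordering (PBW) structure of $H_N$ enters; the observation that the transported action of $\bar{\mc B}_N$ is determined, as a set of left multiplications, by the single vectors $\bar b_{ij}\ep_{id}$, which is what converts the operator identity of Theorem \ref{Z=B} into an identity of elements of $\C[\bs x]\otimes(\C[\bs y]\ltimes\C S_N^y)$; and keeping track of the monomial reindexing between $\mc P^{\bar{\mc B}}$ and $\mc P^{\mc Z}$.
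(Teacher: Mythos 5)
Your argument is correct and is precisely the unwinding the paper leaves implicit (the corollary is stated with no proof, as an immediate reformulation of Theorem \ref{Z=B} via Lemmas \ref{elementary} and \ref{second way}): the factorization $\tilde\iota=\al\circ\iota$, the recovery of a left-multiplication operator from its value on $\ep_{id}$, and the evaluation of $\iota\,\mc P^{\bar{\mc B}}=\mc P^{\mc Z}\iota$ at $\ep_{id}$ are exactly the intended steps. Your attention to the $u\leftrightarrow v$ index transposition between the two universal polynomials is also the right bookkeeping point.
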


\subsection{The spherical subalgebra}
Recall that $e\in H_N$ is the symmetrizing element, see \Ref{symm el}.
The {\it spherical subalgebra} $U_N$ is given by
\be
U_N=eH_Ne\subset H_N.
\ee
We have the Satake homomorphism:
\be
s:\ \mc Z_N\to H_N, \qquad c\mapsto c\ e.
\ee

Let $K$ be the $N\times N$ matrix with all entries $1$. Let
$X=\on{diag}(x_1\lc x_N)$, $Y=\on{diag}(y_1\lc y_N)$ be diagonal
$N\times N$ matrices.

Define the {\it universal spherical polynomial}
\be
\mc P^U=:\on{rdet}((v-X)(u-Y)-K):e.
\ee
We have
\be
s(\mc P^{\mc Z})=\mc P^U.
\ee

\begin{thm}
The coefficients of the universal spherical polynomial $c_{ij}e$
generate the spherical subalgebra $U_N$. In particular, the Satake
homomorphism is an isomorphism.
\end{thm}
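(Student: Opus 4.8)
The plan is to deduce this theorem from Theorem~\ref{central thm} together with the structure of $H_N$ as a free module over its center. First I would recall that, by Theorem~\ref{irreps} and the general theory in \cite{EG}, the Cherednik algebra $H_N$ is a free $\mc Z_N$-module of rank $(N!)^2$, and more precisely $H_N e$ is a free $\mc Z_N$-module of rank $N!$ which, as a module over $\mc Z_N\otimes\C[S_N]$, realizes the regular representation of $S_N$ fiberwise. The key structural input is that $e H_N e$ is also free over $\mc Z_N$, of rank one: $eH_Ne=\mc Z_N e\cong\mc Z_N$. This is the statement that the Satake homomorphism is an isomorphism, so in fact the two assertions of the theorem are essentially the same statement, and the content is to prove $U_N=eH_Ne=s(\mc Z_N)$.

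The argument I would give has two halves. For the inclusion $s(\mc Z_N)\subseteq U_N$ there is nothing to prove: $ce=ece\in eH_Ne$ since $e$ is idempotent and $c$ is central. For the reverse inclusion $U_N\subseteq s(\mc Z_N)$, I would pass to the associated graded algebra with respect to the filtration $\deg x_i=\deg y_i=1$, $\deg s_{ij}=0$ already used in the proof of Theorem~\ref{central thm}. There $\on{gr}H_N\cong\C[\bs z,\bs\la]\ltimes\C S_N^{z,\la}$, the symmetrizing idempotent $e$ survives in degree zero, and $\on{gr}(eH_Ne)=e(\C[\bs z,\bs\la]\ltimes\C S_N^{z,\la})e=(\C[\bs z,\bs\la])^{S_N^{z,\la}}e=P_N e$, using the elementary computation $e\,\sigma\,e=e$ for all $\sigma$ and the averaging of polynomials. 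By Lemma~\ref{smash center}, $P_N$ is exactly the center of $\on{gr}H_N$, and by Lemma~\ref{multisym gen} it is generated by the $p_{ij}$. Since the $c_{ij}$ of Theorem~\ref{central thm} have symbols $p_{ij}$, the symbols of $\{c_{ij}e\}$ generate $\on{gr}(eH_Ne)$ as an algebra. A standard filtered–graded lifting argument then shows the $c_{ij}e$ generate $eH_Ne=U_N$, and since $c_{ij}e=s(\mc P^{\mc Z}\text{-coefficients})$, we get $U_N=s(\mc Z_N)$. Finiteness of all filtration pieces, noted already in the proof of Theorem~\ref{central thm}, guarantees the lifting argument is valid.

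It remains to see that $s$ is injective, so that $U_N\cong\mc Z_N$. For this I would again use the graded picture: $\on{gr}s$ is the map $P_N\to P_Ne$, $p\mapsto pe$, which is an isomorphism because $\C[\bs z,\bs\la]\ltimes\C S_N^{z,\la}$ is free over its center $P_N$ and multiplication by the idempotent $e$ on the center component is injective (a nonzero central $p$ cannot satisfy $pe=0$ in a free module over $P_N$). Hence $\on{gr}s$ is an isomorphism of graded algebras, and therefore $s:\mc Z_N\to U_N$ is an isomorphism of filtered algebras. Combining with the previous paragraph, $s$ is an algebra isomorphism onto $U_N$, and the coefficients $c_{ij}e$ of $\mc P^U=s(\mc P^{\mc Z})$ generate $U_N$.

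The main obstacle I anticipate is the bookkeeping around the associated graded algebra and the compatibility of $e$ with the filtration. One must check carefully that $e$ has a well-defined symbol in degree zero (it does, since $\deg s_{ij}=0$) and that passing to $\on{gr}$ commutes with the operations $c\mapsto ce$ and with forming $eH_Ne$; both rely on $\on{gr}$ being applied to a filtration with finite-dimensional pieces, which is exactly the hypothesis recorded earlier. The identity $e\sigma e=e$ and the averaging formula $e\,p(\bs z,\bs\la)\,e=\bigl(\tfrac1{N!}\sum_{\tau}\tau^{z,\la}p\bigr)e$ are routine but must be stated precisely to pin down $\on{gr}(eH_Ne)=P_Ne$. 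Everything else is a formal consequence of Theorems~\ref{central thm} and the two structural lemmas.
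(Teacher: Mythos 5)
Your proof is correct and is in substance the paper's own argument: the paper's entire proof is the single sentence ``The theorem follows from Theorem \ref{central thm}'', and the filtered--graded reasoning you supply (the symbols of the $c_{ij}e$ are $p_{ij}e$, which generate $\on{gr}(eH_Ne)=e\bigl(\C[\bs z,\bs\la]\ltimes\C S_N^{z,\la}\bigr)e=P_Ne\cong P_N$ by Lemmas \ref{smash center} and \ref{multisym gen}, whence the $c_{ij}e$ generate $U_N$ and $s$ is injective) is exactly the intended way to make that deduction precise, using the same filtration already introduced in the proof of Theorem \ref{central thm}. One small caveat: your justification of the injectivity of $\on{gr}s$ via freeness of $\C[\bs z,\bs\la]\ltimes\C S_N^{z,\la}$ over $P_N$ is both unnecessary and dubious (since $\C^{2N}/S_N$ is singular, $\C[\bs z,\bs\la]$ need not be a free $P_N$-module); the fact you actually need, that $pe\neq 0$ for $0\neq p\in P_N$, is immediate by reading off the coefficient of the identity permutation in $pe=\frac{1}{N!}\sum_{\si}p\,\si$.
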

\begin{proof}
The theorem follows from Theorem \ref{central thm}.
\end{proof}

The fact that Satake homomorphism is an isomorphism is not new, see \cite{EG}.

We use the isomorphism $\iota$ to identify the spherical subalgebra
with a subspace of $\V_{\bs 1}$. The left and right multiplications
by $\sigma\in S_N$ considered as elements of $H_N$ correspond to
two actions of the symmetric group $S_N$ on $\V$ which we call the
{\it left and right actions}. These actions are defined as follows.
The left action permutes the variables $\la_i$ and vectors $\ep_j$:
\be
\si^{L}\bigl(p(\bs z,\la_1\lc\la_N)\,
\ep_{i_1}\otimes\dots\otimes \ep_{i_N}\bigr)\,=\,
p(\bs z, \la_{\si(1)}\lc\la_{\si(N)})\,
\ep_{\si(i_1)}\otimes\dots\otimes \ep_{\sigma(i_N)}.
\ee
The left action restricted on $V^{\otimes N}$ coincides with
the standard Weyl group action on representations of $\gln$.

The right action permutes the variables $z_i$ and the factors of $V^N$:
\be
\si^{R}\bigl(p(z_1\lc z_N,\bs\la)\,\ep_{i_1}\otimes\dots\otimes
\ep_{i_N}\bigr)\,=\,
p(z_{\si(1)}\lc z_{\si(N)},\bs\la)\,
\ep_{i_{\si^{-1}(1)}}\otimes\dots\otimes \ep_{i_{\sigma^{-1}(N)}}.
\ee
Clearly the left and the right actions of $S_N$ commute.

The space $\V_{\bs 1}$ is invariant under the left and right $S_N$
actions. Denote by $\V_{\bs 1}^{S^L}$, $\V_{\bs 1}^{S^R}$ the
subspaces of invariants in $\V_{\bs 1}$ with respect to the left and
right actions respectively. Denote also by $\V_{\bs 1}^{S^{R}\times
S^L}=\V_{\bs 1}^{S^L}\cap\V_{\bs 1}^{S^R}$, the subspace of
invariants with respect to both actions.

\begin{lem}\label{He,eH,EHE}
For any $v\in \V_{\bs 1}$, $\si\in S_N$, we have
\be
\iota (\si^L v) = \si \iota v, \qquad \iota (\si^R v) = (\iota v) \si^{-1}.
\ee
Moreover
\be
\iota(\V_{\bs 1}^{S^R})=He\subset H,\qquad
\iota(\V_{\bs 1}^{S^L})=eH\subset H, \qquad
\iota(\V_{\bs 1}^{S^L\times S^R})=U_N\subset H.
\ee
\end{lem}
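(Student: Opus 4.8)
The plan is to verify the two displayed formulas directly from the definition of $\iota$ and then deduce the three identifications of subspaces as formal consequences. For the first formula, I would check $\iota(\si^L v)=\si\,\iota v$ on a spanning set of $\V_{\bs 1}$, namely on elements $v=p(\bs z)q(\bs\la)\ep_\tau$. By definition of the left action, $\si^L\bigl(p(\bs z)q(\bs\la)\ep_\tau\bigr)=p(\bs z)\,q(\la_{\si(1)}\lc\la_{\si(N)})\,\ep_{\si\tau}$; applying $\iota$ gives $(\si^L$-twisted$)q(\bs x)\,(\si\tau)\,p(\bs y)$. On the other hand $\si\,\iota v=\si\,q(\bs x)\,\tau\,p(\bs y)$, and the relations $s_{ij}x_i=x_js_{ij}$ in $H_N$ (iterated to a general permutation) convert $\si\,q(x_1\lc x_N)$ into $q(x_{\si^{-1}(1)}\lc x_{\si^{-1}(N)})\,\si$; one then checks the index bookkeeping matches. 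The identity $\iota(\si^R v)=(\iota v)\si^{-1}$ is entirely analogous, using instead that the right action permutes the $z_i$ and the tensor factors, that $\iota$ sends $z$-polynomials to $y$-polynomials sitting on the right, and the relations $s_{ij}y_i=y_js_{ij}$.

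Granting these two equivariance properties, the three subspace identifications follow at once. A vector $v\in\V_{\bs 1}$ is fixed by all $\si^R$ iff $(\iota v)\si^{-1}=\iota v$ for all $\si\in S_N$, i.e. iff $\iota v$ lies in the subspace of $H_N$ of elements invariant under right multiplication by $S_N$. Under the normal ordering isomorphism $H_N\cong \C[\bs x]\otimes\C[S_N]\otimes\C[\bs y]$, right multiplication by $S_N$ acts only on the middle tensor factor (after moving the $y$'s past, which is harmless since we only need the span), so the right-invariants are exactly $H_N e$ where $e$ is the symmetrizer \Ref{symm el}; hence $\iota(\V_{\bs 1}^{S^R})=He$. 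Symmetrically, left multiplication by $S_N$ gives $\iota(\V_{\bs 1}^{S^L})=eH$, and intersecting yields $\iota(\V_{\bs 1}^{S^L\times S^R})=eHe=U_N$.

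I expect the main obstacle to be purely notational: getting the direction of all the permutation twists right simultaneously in the formulas $\iota(\si^L v)=\si\,\iota v$ and $\iota(\si^R v)=(\iota v)\si^{-1}$, since $\iota$ places the $\la\mapsto x$ part on the left and the $z\mapsto y$ part on the right of the permutation $\tau$, and the left/right $S_N$ actions act on different groups of variables with different conventions. The cleanest way to avoid sign-of-the-exponent errors is to first check both identities for $v=\ep_\tau$ (no polynomial coefficients), where they reduce to $\iota(\si\tau)=\si\cdot\iota(\tau)$ and $\iota(\tau\si^{-1})=\iota(\tau)\cdot\si^{-1}$ — immediate since $\iota(\ep_\tau)=\tau$ — and then separately check that $\iota$ intertwines the $\la$-multiplication with left $x$-multiplication and the $z$-multiplication with right $y$-multiplication, which is built into the definition of $\iota$; the general case is the product of these two checks. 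The last step, that right-invariants in $H_N$ equal $H_Ne$, is standard but worth one sentence: $he=h$ for all $\si$ with $h\si=h$ because $e=\frac1{N!}\sum_\si\si$, and conversely $he$ is manifestly right-$S_N$-invariant; the $N!$-freeness of $H_N$ over $\C[\bs x]\otimes\C[\bs y]$ (from the PBW-type isomorphism stated in the excerpt) guarantees no collapse.
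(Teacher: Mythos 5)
Your proposal is correct and is exactly the direct verification the paper has in mind --- the paper's own proof of this lemma is the single line ``The lemma is straightforward.'' One small index correction in the bookkeeping you flag: since $s_{ij}x_i=x_js_{ij}$ gives $\si x_i\si^{-1}=x_{\si(i)}$, one has $\si\,q(x_1\lc x_N)=q(x_{\si(1)}\lc x_{\si(N)})\,\si$ (not $\si^{-1}$ in the indices), and this is precisely what matches $\iota(\si^L v)$, because the left action sends $q(\bs\la)$ to $q(\la_{\si(1)}\lc\la_{\si(N)})$ and $\si^L\ep_\tau=\ep_{\si\tau}$.
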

\begin{proof}
The lemma is straightforward.
\end{proof}

\begin{cor}
The Bethe algebra $\bar{\mc B}_N$ is isomorphic to the spherical
subalgebra $U_N\subset H_N$. Moreover, the space $\V_{\bs
1}^{S^L\times S^R}$ is a cyclic $\bar{\mc B}_N$-module which is
identified with the regular representation of $U_N$ by the isomorphism
$\iota$.
\qed
\end{cor}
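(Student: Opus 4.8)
The plan is to obtain both assertions by combining Theorem~\ref{Z=B} with the two isomorphisms already at hand: the isomorphism $\tau_{BZ}\colon\bar{\mc B}_N\to\mc Z_N$ sending $\bar b_{ij}$ to $c_{ij}$, and the Satake homomorphism $s\colon\mc Z_N\to U_N$, $c\mapsto ce$, which was just shown to be an algebra isomorphism onto $U_N=eH_Ne$. The first statement then follows at once, since $s\circ\tau_{BZ}\colon\bar{\mc B}_N\to U_N$ is an isomorphism of algebras.

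For the second statement I would first upgrade Theorem~\ref{Z=B}. As the $\bar b_{ij}$ generate $\bar{\mc B}_N$ and the $c_{ij}$ generate $\mc Z_N$, the identity $\iota\,{\mc P}^{\bar{\mc B}}=\mc P^{\mc Z}\iota$ implies that $\iota(bv)=\tau_{BZ}(b)\,\iota(v)$ for all $b\in\bar{\mc B}_N$ and $v\in\V_{\bs 1}$, where the right-hand side is left multiplication in $H_N$ by the central element $\tau_{BZ}(b)$. Thus $\iota$ identifies $\V_{\bs 1}$, viewed as a $\bar{\mc B}_N$-module, with $H_N$ on which $\bar{\mc B}_N$ acts through $\tau_{BZ}$ by left multiplication. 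By Lemma~\ref{He,eH,EHE}, $\iota$ carries $\V_{\bs 1}^{S^L\times S^R}$ isomorphically onto $U_N\subset H_N$, so it remains only to recognize the induced module structure on $U_N$.

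This last point is the one step that needs a small argument. For $c\in\mc Z_N$ and $h\in U_N=eH_Ne$ one has $eh=h$, and, since $c$ is central and $e^2=e$, also $ce=ece\in U_N$; hence $ch=c(eh)=(ce)h=s(c)\,h$, the product computed in $U_N$. Therefore the restriction to $U_N$ of the left multiplication action of $\mc Z_N$ on $H_N$ is precisely the left regular action of $U_N$ on itself, pulled back along the isomorphism $s$. Combined with the previous paragraph, this shows that $\iota$ identifies $\V_{\bs 1}^{S^L\times S^R}$, as a module over $\bar{\mc B}_N$ (acting via $s\circ\tau_{BZ}$), with the regular representation of $U_N$. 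Since $U_N$ has unit $e$ we have $U_N\cdot e=U_N$, and as $s\circ\tau_{BZ}$ is onto $U_N$ the module is cyclic, generated by $\iota^{-1}(e)=\frac{1}{N!}\sum_{\sigma\in S_N}\ep_\sigma$. Apart from this identification of the two actions on $U_N$, the argument is pure bookkeeping, so I expect no real obstacle.
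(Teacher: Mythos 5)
Your proposal is correct and follows essentially the same route the paper intends: the corollary is stated with \qed precisely because it is the combination of the isomorphism $\tau_{BZ}$ from Theorem~\ref{Z=B}, the Satake isomorphism $s:\mc Z_N\to U_N$, and Lemma~\ref{He,eH,EHE}. Your extra verification that left multiplication by $c\in\mc Z_N$ on $U_N=eH_Ne$ agrees with left multiplication by $s(c)=ce$ inside $U_N$ is exactly the small check the paper leaves implicit, and it is carried out correctly.
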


\subsection{Action of the Cherednik algebra}
We identify the space $\V_{\bs 1}^{S^R}=\iota^{-1}(He)$ with the space
$\C[\bs z,\bs\la]$ and compute the left action of the Cherednik algebra.
Define the projection map:
\be
\on{pr}: \V_{\bs 1}\to \C[\bs z,\bs \la], \qquad \sum_{\si\in S_N}
p_\si(\bs z,\bs\la)\ep_\si\mapsto p_{id}(\bs z,\bs \la).
\ee

\begin{lem}\label{proj lem}
We have the isomorphisms of vector spaces:
\bea
&\on{pr}&:\ \V_{\bs 1}^{S^R} \to \C[\bs z,\bs \la], \\
&\on{pr}&:\ \V_{\bs 1}^{S^L\times S^R} \to P_N.
\eea
\end{lem}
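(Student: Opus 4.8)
The plan is to analyze the projection map $\on{pr}$ directly by understanding the structure of $\V_{\bs 1}^{S^R}$ and $\V_{\bs 1}^{S^L\times S^R}$ as modules over $\C[\bs z,\bs\la]$ and its symmetric subalgebras, using the basis $\{\ep_\si\}_{\si\in S_N}$ of $(V^{\otimes N})_{\bs 1}$. Recall that the right action of $S_N$ sends $\ep_\tau$ (up to permutation of the $z$-variables) to $\ep_{\tau\rho^{-1}}$ for $\rho\in S_N$, while the left action sends $\ep_\tau$ (up to permutation of the $\la$-variables) to $\ep_{\rho\tau}$. Thus $\V_{\bs 1}$ is a free $\C[\bs z,\bs\la]$-module with basis indexed by $S_N$, and the two $S_N$-actions correspond to left and right translation on this index set combined with the variable permutations. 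The element $\on{pr}$ just extracts the coefficient of $\ep_{id}$.

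First I would prove the first isomorphism. Given $v=\sum_{\si}p_\si\ep_\si\in\V_{\bs 1}^{S^R}$, right-invariance under $\rho^R$ forces $\rho^R v=v$, which (comparing the coefficient of $\ep_{id}$ on both sides) expresses $p_\rho$ in terms of $p_{id}$ with the $z$-variables permuted: concretely $p_{\si}(\bs z,\bs\la)$ is determined by $p_{id}$ via the rule coming from $\si^R$. Hence the assignment $v\mapsto p_{id}$ is injective, and conversely any polynomial $f\in\C[\bs z,\bs\la]$ can be symmetrized over the right $S_N$-action (the action is free on the basis, so no averaging denominators or vanishing occurs) to produce a unique preimage in $\V_{\bs 1}^{S^R}$ with $\on{pr}$-image $f$. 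This gives that $\on{pr}\colon\V_{\bs 1}^{S^R}\to\C[\bs z,\bs\la]$ is a linear isomorphism; alternatively one can cite Lemma \ref{He,eH,EHE}, which identifies $\V_{\bs 1}^{S^R}$ with $He\subset H_N$, and then use the normal ordering map to see $He\cong\C[\bs x]\otimes\C[\bs y]\cong\C[\bs z,\bs\la]$ as vector spaces.

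Next I would treat the second isomorphism by further imposing left-invariance. A vector $v\in\V_{\bs 1}^{S^R}$, written via its $\on{pr}$-image $f=p_{id}$, lies in $\V_{\bs 1}^{S^L}$ precisely when the left action fixes $v$; translating through the bookkeeping of the left action (which combines $\rho\cdot$ on the index with a permutation of the $\la$'s), this becomes a condition identifying $f$ with a polynomial invariant under the simultaneous $S_N$-action permuting the $z_i$ together with the $\la_i$ — that is, $f\in P_N$. Conversely every $f\in P_N$ arises this way. One cleanly packages this by using Lemma \ref{He,eH,EHE} again: $\V_{\bs 1}^{S^L\times S^R}$ is identified with $U_N=eH_Ne$, and under the normal ordering isomorphism $H_N\cong\C[\bs x]\otimes\C[S_N]\otimes\C[\bs y]$ the subspace $eH_Ne$ is the image of the $S_N^{x,y}$-invariants in $\C[\bs x]\otimes\C[\bs y]$, which is exactly $P_N$ after renaming $x_i\to\la_i$, $y_i\to z_i$. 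Tracking that $\on{pr}$ corresponds under $\iota$ to the coefficient-of-identity map $H_Ne\to\C[\bs x]\otimes\C[\bs y]$ finishes it.

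The main obstacle is purely bookkeeping: getting the indices right in how $\si^L$ and $\si^R$ act on $\ep_\tau$ simultaneously with the variable permutations, and confirming that the combined left-and-right invariance condition on $p_{id}$ is exactly the diagonal $S_N^{z,\la}$-invariance defining $P_N$ rather than some twisted version. Because both $S_N$-actions are free on the basis $\{\ep_\si\}$, there are no subtleties with averaging or with the characteristic, so once the index calculus is set up correctly both statements are immediate; I would therefore lean on the already-established dictionary of Lemma \ref{He,eH,EHE} and the normal ordering map to minimize the chance of sign or index errors, remarking only that the lemma is straightforward.
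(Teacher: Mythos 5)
Your proposal is correct, and it is exactly the direct verification the paper has in mind when it declares the lemma straightforward: comparing coefficients of $\ep_{id}$ under the free right (resp.\ left-and-right) $S_N$-action, or equivalently invoking Lemma \ref{He,eH,EHE} to identify $\V_{\bs 1}^{S^R}$ with $H_Ne\cong\C[\bs x]\otimes\C[\bs y]$ and $\V_{\bs 1}^{S^L\times S^R}$ with $eH_Ne\cong P_N$. No gaps; the bookkeeping you flag works out to the untwisted diagonal $S_N^{z,\la}$-invariance as claimed.
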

\begin{proof}
The lemma is straightforward.
\end{proof}
In particular, we have
\be
(\on{pr}\iota^{-1}):\ H_N e \to\C[\bs z,\bs \la],\qquad
q(\bs x)p(\bs y)e\mapsto
q(\bs \la)p(\bs z).
\ee

Set
\bean\label{cher act}
\mc K_i=z_i+\sum_{j,\ j\neq i}s_{ij}^z\ \frac{1}{\la_i-\la_j}(1-s^\la_{ij}).
\eean

\begin{prop}
We have
\be
(\on{pr}\iota^{-1}) x_i= \la_i(\on{pr}\iota^{-1}), \qquad
(\on{pr}\iota^{-1}) s_{ij}= s_{ij}^{z,\la}(\on{pr}\iota^{-1}),\qquad
(\on{pr}\iota^{-1}) y_i=\mc K_i(\on{pr}\iota^{-1}).
\ee
In particular, the assignment $x_i=\la_i$, $y_i=\mc K_i$ and
$s_{ij}=s_{ij}^{z,\la}$ defines a left action of $H_N$ on $\C[\bs
z,\bs\la]$ such that $\C[\bs z,\bs\la]$ is canonically identified
with $H_Ne$ as a left $H_N$-module.
\end{prop}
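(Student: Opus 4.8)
The plan is to verify the three intertwining relations directly, since the map $\on{pr}\iota^{-1}$ is already known to be a linear isomorphism from $H_Ne$ onto $\C[\bs z,\bs\la]$ by Lemma \ref{proj lem} (composed with $\iota$). The relation for $x_i$ is immediate from the formula $(\on{pr}\iota^{-1})\bigl(q(\bs x)p(\bs y)e\bigr)=q(\bs\la)p(\bs z)$ displayed just above: left multiplication by $x_i$ turns $q(\bs x)p(\bs y)e$ into $(x_iq)(\bs x)p(\bs y)e$, which maps to $(x_iq)(\bs\la)p(\bs z)=\la_i\cdot q(\bs\la)p(\bs z)$. The relation for $s_{ij}$ is equally routine: one has $s_{ij}q(\bs x)p(\bs y)e = q(s_{ij}\bs x)s_{ij}p(\bs y)e = q(s_{ij}\bs x)(s_{ij}p)(\bs y)s_{ij}e = q(s_{ij}\bs x)(s_{ij}p)(\bs y)e$ using $s_{ij}e=e$, and under $\on{pr}\iota^{-1}$ this becomes $q(s_{ij}\bs\la)(s_{ij}p)(\bs z)=s_{ij}^{z,\la}\bigl(q(\bs\la)p(\bs z)\bigr)$.

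The only substantial computation is the formula for $y_i$. Here I would start from an arbitrary element $q(\bs x)p(\bs y)e\in H_Ne$ and compute $y_i\cdot q(\bs x)p(\bs y)e$ by moving $y_i$ to the right past $q(\bs x)$ using the commutation relations $[x_k,y_i]=-s_{ik}$ for $k\neq i$ and $[x_i,y_i]=-\sum_{a\neq i}s_{ia}$. The result is $q(\bs x)y_ip(\bs y)e$ plus a sum of correction terms of the form $($polynomial in $\bs x)s_{ik}p(\bs y)e$; after pushing each $s_{ik}$ to the right through $p(\bs y)$ and absorbing it into $e$ via $s_{ik}e=e$, and then applying $\on{pr}\iota^{-1}$, one obtains $\mc K_i$ acting on $q(\bs\la)p(\bs z)$, where the $z_i$ summand of $\mc K_i$ records the term where $y_i$ survives (recall right multiplication by $y_i$ corresponds to multiplication by $z_i$ by Lemma \ref{second way}), and the divided-difference terms $s^z_{ij}\frac{1}{\la_i-\la_j}(1-s^\la_{ij})$ arise from collecting the $s_{ij}$-corrections against the Taylor expansion of $q$. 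The bookkeeping of the divided differences is the main obstacle: one must check that the telescoping of commutator corrections over the monomials of $q$ reassembles exactly into the Demazure-type operator $\frac{1}{\la_i-\la_j}(1-s^\la_{ij})$, which is a short but slightly delicate algebraic identity. It suffices to check it on monomials $q(\bs x)=x^{\bs m}$, where it reduces to the elementary identity $\frac{\la_i^{m}-\la_j^{m}}{\la_i-\la_j}=\sum_{a+b=m-1}\la_i^a\la_j^b$.

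Once the three relations are established, the final sentence of the proposition is automatic: the relations say that $\on{pr}\iota^{-1}$ intertwines left multiplication on $H_Ne$ with the operators $\la_i$, $\mc K_i$, $s^{z,\la}_{ij}$ on $\C[\bs z,\bs\la]$; since $\on{pr}\iota^{-1}$ is a bijection and $x_i,y_i,s_{ij}$ generate $H_N$, transporting the $H_N$-module structure of $H_Ne$ through this bijection yields precisely the action $x_i=\la_i$, $y_i=\mc K_i$, $s_{ij}=s^{z,\la}_{ij}$ on $\C[\bs z,\bs\la]$, and identifies $\C[\bs z,\bs\la]$ with $H_Ne$ as left $H_N$-modules.
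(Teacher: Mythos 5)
Your proposal is correct and is, in essence, the same direct-verification argument the paper gives, but you supply the substance where the paper is terse: the paper dismisses the $x_i$ and $s_{ij}$ relations as clear and, for $y_i$, merely reduces the check to the monomials $\prod_{i\ge 2}z_i^{a_i}$ using the centrality of $\sum_i x_i$ and $\sum_i y_i$ (more generally, of symmetric polynomials in $\bs x$ and in $\bs y$), calling the remaining calculation straightforward. Your route instead computes $[y_i,q(\bs x)]$ for arbitrary $q$, which assembles into the divided-difference (Dunkl-type) formula $[y_i,q(\bs x)]=\sum_{j\ne i}\frac{q-s_{ij}q}{x_i-x_j}\,s_{ij}$; combined with $s_{ij}\,p(\bs y)\,e=(s_{ij}p)(\bs y)\,e$ and the explicit formula for $\on{pr}\iota^{-1}$ this lands exactly on $\mc K_i$, and your reduction of the bookkeeping to $\frac{\la_i^{m}-\la_j^{m}}{\la_i-\la_j}=\sum_{a+b=m-1}\la_i^a\la_j^b$ (together with the Leibniz-type multiplicativity of the divided difference) is the right way to make it rigorous. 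One correction is needed: you quote $[x_k,y_i]=-s_{ik}$ for $k\ne i$, whereas the paper's relation is $[x_i,y_j]=s_{ij}$, i.e.\ $[y_i,x_k]=-s_{ik}$; your sign is inconsistent with your (correct) $[x_i,y_i]=-\sum_{a\ne i}s_{ia}$ and would flip the cross terms so that they do not reassemble into $\frac{1}{\la_i-\la_j}(1-s^\la_{ij})$. A quick sanity check for $N=2$: $y_1x_2e=x_2y_1e-s_{12}e$ maps to $\tfrac12(z_1\la_2-1)$, matching $\mc K_1(\la_2)=z_1\la_2-1$ only with the paper's sign. With that fixed, your verification on the spanning set $q(\bs x)p(\bs y)e$ of $H_Ne$ and the transport-of-structure conclusion in your last paragraph are complete and correct.
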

\begin{proof}
The first and the second equalities are clear. To compute the action
of $y_1$ on $\C[\bs z,\bs\la]$ it is sufficient to compute it on
$\prod_{i=2}^N z_i^{a_i}$, since $\sum_{i=1}^N x_i$ and
$\sum_{i=1}^N y_i$ are central elements. This calculation is
straightforward.
\end{proof}

\begin{rem}
If $p(\bs y)$ is a symmetric polynomial in $y_1\lc y_N$, then it
is central, $p(\bs y)\in \mc Z_N$, and therefore it acts on $\C[\bs
z,\bs\la]$ as the operator of multiplication by $p(\bs z)$.
\end{rem}

\section{Calogero-Moser spaces} \label{sec C=B}
\subsection{The definition}
Let
\be \tilde C_N = \{(Z,\La)\in \gln\times \gln\ |\
\on{rk}([Z,\La]+1)=1\}.
\ee
The group $GL_N$ of complex invertible matrices acts on $\tilde C_N$
by simultaneous conjugation and the action is free and proper, see
\cite{Wn}. The quotient space $C_N$ is called the {\it $N$-th
Calogero-Moser space}. For $(Z,\La)\in\tilde C_N$, we write
$(Z,\La)\in C_N$ meaning the orbit of $GL_N$ containing $(Z,\La)$.

The Calogero-Moser space $C_N$ is a smooth complex affine variety of
dimension $2N$, see \cite{Wn}.
Let $\mc O^C_N$ be the {\it algebra of regular functions on $\C^N$}.
It is defined as follows. Let $z_{ij},\la_{ij}$ be the matrix entries
of $Z,\La$ considered as functions on $\tilde C_N$. Let
$\C[z_{ij},\la_{ij}]^{GL_N}$ be the algebra of polynomials in
$x_{ij},z_{ij}$ invariant with respect to the action of $GL_N$. Let
$I\subset \C[z_{ij},\la_{ij}]^{GL_N}$ be the ideal of the invariant
polynomials which vanish on $\tilde C_N$. Then
\be
\mc O^C_N=\C[z_{ij},\la_{ij}]^{GL_N}/I.
\ee

\subsection{The Calogero-Moser $\Psi$-function}
Let $Z=(z_{ij})$, $\La=(\la_{ij})$.
Define the {\it Calogero-Moser $\Psi$-function} by
\be
\Psi^{C}=\det(1-(v-\La)^{-1}(u-Z)^{-1}).
\ee
The Calogero-Moser $\Psi$-function (more precisely, the closely related
to it stationary Baker function) was introduced in \cite{Wn}.

The Calogero-Moser $\Psi$-function is a formal power series in $u^{-1}$
and $v^{-1}$ with coefficients in $\mc O^C_N$. Write
\be
\Psi^{C}=1+\sum_{i,j=1}^\infty \psi_{ij}^{C} u^{-i}v^{-j},
\qquad \psi_{ij}^{C}\in \mc O^C_N.
\ee

\begin{lem}\label{Pasha}
The algebra $\mc O^C_N$ of regular functions on $C^N$ is
generated by $\psi^C_{ij}$, $i,j\in\Z_{>0}$.
\end{lem}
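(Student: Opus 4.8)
The plan is to reduce the statement to the well-known fact that $\mc O^C_N$ is generated by the functions $\tr(f(Z,\La))$ where $f$ runs over noncommutative monomials in $Z$ and $\La$ (this is the standard description of the coordinate ring of a quotient by $GL_N$ via the invariant theory of matrices, or alternatively the description in \cite{Wn}). It then suffices to show that each such trace $\tr(Z^{a_1}\La^{b_1}\cdots Z^{a_k}\La^{b_k})$ lies in the subalgebra generated by the $\psi^C_{ij}$, and conversely that the $\psi^C_{ij}$ are themselves regular functions on $C_N$ (the latter being immediate, since on $\tilde C_N$ the matrices $u-Z$ and $v-\La$ are invertible as formal series in $u^{-1}$, $v^{-1}$ and the coefficients of $\det(1-(v-\La)^{-1}(u-Z)^{-1})$ are manifestly $GL_N$-invariant polynomials in the entries).

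First I would expand the logarithm, exactly as in the proof of Lemma \ref{multisym gen}:
\be
\log\Psi^C\,=\,\tr\log\bigl(1-(v-\La)^{-1}(u-Z)^{-1}\bigr)\,=\,
-\sum_{r=1}^\infty\frac1r\,\tr\Bigl(\bigl(\textstyle\sum_{j\ge0}\La^jv^{-j-1}\bigr)
\bigl(\sum_{i\ge0}Z^iu^{-i-1}\bigr)\Bigr)^r.
\ee
Expanding the $r$-th power produces, as the coefficient of a given monomial in $u^{-1},v^{-1}$, a finite sum of traces of words in $Z$ and $\La$; crucially, $Z$ and $\La$ do \emph{not} commute here, so one obtains all cyclic words, not just $\tr(\La^bZ^a)$. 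The coefficients of $\log\Psi^C$ are polynomials in the $\psi^C_{ij}$ (since $\Psi^C = \exp(\log\Psi^C)$ and $\Psi^C$ has constant term $1$), so it is enough to show every word trace $\tr(Z^{a_1}\La^{b_1}\cdots Z^{a_k}\La^{b_k})$ is a polynomial in the coefficients of $\log\Psi^C$.

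The heart of the argument is therefore a triangularity statement: order the word traces by total degree, and within a fixed degree by some refinement (e.g.\ $k$, the number of $Z$-blocks), and show that the coefficient of $u^{-n-1}v^{-m-1}$ in $-\log\Psi^C$ equals $\tr(\text{appropriate leading word})$ plus a combination of traces of words that are lower in this ordering, with nonzero leading coefficient. Reading off the $r=1$ term gives $\tr(\La^mZ^n)$ with coefficient $1$; the terms with $r\ge2$ contribute traces of words with $k\ge2$ blocks but the same bidegree $(n,m)$ counted with multiplicity — so one inducts on $k$ to solve for every word trace. This is the step I expect to be the main obstacle: one must check that the induction actually closes, i.e.\ that the system of linear equations expressing the $\psi$-coefficients in terms of word traces is invertible after the chosen ordering. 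In fact it is cleaner to invert the other direction: the coefficients of $\Psi^C$ (or of $\log\Psi^C$) are, up to lower-order corrections and a unipotent change of variables, the complete set of word-trace generators in each bidegree, so the subalgebra they generate is all of $\mc O^C_N$. Once this triangularity is in place, combined with the generation of $\mc O^C_N$ by word traces, the lemma follows.
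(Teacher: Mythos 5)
Your opening moves (reduction to word traces via classical invariant theory, expansion of $\log\Psi^C=\tr\log(1-(v-\La)^{-1}(u-Z)^{-1})$) coincide with the paper's, but there is a genuine gap at precisely the step you flagged as the main obstacle, and the way you wave it away is incorrect. For each bidegree $(m,n)$ the series $\log\Psi^C$ supplies exactly \emph{one} new function: the coefficient of $v^{-m-1}u^{-n-1}$, which equals $\tr(\La^mZ^n)$ plus a single fixed linear combination of word traces of strictly lower total degree. By contrast, the number of cyclically inequivalent words $\La^{j_1}Z^{i_1}\cdots\La^{j_k}Z^{i_k}$ in a given bidegree grows with the degree. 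So the linear system you propose to invert is badly underdetermined; no ordering by the number of blocks $k$ makes it triangular, and your closing assertion that the coefficients of $\Psi^C$ are ``up to a unipotent change of variables the complete set of word-trace generators in each bidegree'' is false. Indeed, your argument uses nothing about $\tilde C_N$ beyond what holds for an arbitrary pair of matrices, so if it worked it would show that the coefficients of $\det(1-(v-\La)^{-1}(u-Z)^{-1})$ generate the full invariant ring $\C[z_{ij},\la_{ij}]^{GL_N}$ of two generic matrices --- which they do not, by the generator count just given.

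The missing ingredient is the defining relation of the Calogero--Moser space, and it is the heart of the paper's proof. Setting $K=1-[\La,Z]$, the condition $\on{rk}([Z,\La]+1)=1$ together with $\tr K=N$ gives the factorization $\tr(T_{\bs i,\bs j}KT_{\bs i',\bs j'}K)=\tr(T_{\bs i,\bs j}K)\tr(T_{\bs i',\bs j'}K)$, from which the paper deduces the key claim: \emph{every} word trace $\tr(\La^{i_1}Z^{j_1}\cdots\La^{i_k}Z^{j_k})$ equals the sorted trace $\tr(\La^{|\bs i|}Z^{|\bs j|})$ plus terms of strictly lower filtration degree. This collapses each bidegree to a single generator on $C_N$, and only then does the triangular extraction of the $\tr(\La^mZ^n)$ from $\log\Psi^C$ (exactly as in Lemma \ref{multisym gen}) close the induction. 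You must prove this rank-one collapsing step --- your proposal as written cannot be completed without it.
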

\begin{proof}
For $\bs i=(i_1\lc i_k)$, $\bs j=(j_1\lc j_k)$, let
$T_{\bs i,\bs j}=\La^{i_1}Z^{j_1}\dots \La^{i_k}Z^{i_k}$.
According to the standard theory of invariants, the algebra
$\C[z_{ij},\la_{ij}]^{GL_N}$ is generated by functions
$\tr(T_{\bs i,\bs j})$, see \cite{W}.

Define a $\Z_{\geq 0}$ filtration on algebra $\mc O^C_N$ by letting
$|\bs i|=\sum_{s=1}^ki_s$, $|\bs j|=\sum_{s=1}^kj_s$ and $\deg \tr
T_{\bs i,\bs j}=|\bs i|+|\bs j|$. We say $\deg F\leq s$ if $F$ can be
written as a linear combination of products of $\tr T_{\bs i,\bs j}$
with degree of each factor at most $s$.

We claim that
\be
\tr T_{\bs i,\bs j}=\tr T_{|\bs i|,|\bs j|}+ ...,
\ee
where the dots denote the terms of degree less than $|\bs i|+|\bs j|$.

Given that claim, the proof of the lemma is similar to the
proof of Lemma \ref{multisym gen}.

To prove the claim, let $K=1-[\La,Z]$. Since the
rank of $K$ is one and $\tr K=N$ ,
we see that
\bea
\tr (T_{\bs i,\bs j}K T_{\bs i',\bs j'}K)=\frac{1}{N^2}
\tr (KT_{\bs i,\bs j}K KT_{\bs i',\bs j'}K)= \\
\frac{1}{N^2}
\tr (KT_{\bs i,\bs j}K)\tr(KT_{\bs i',\bs j'}K)=
\tr (T_{\bs i,\bs j}K)\tr(T_{\bs i',\bs j'}K).
\eea
It follows that
\be
\tr (T_{\bs i,\bs j}[\La,Z])=\tr(\La^{|\bs i|}Z^{|\bs j|}[\La,Z])+\dots,
\ee
where the dots denote the terms of degree
less than $|\bs i|+|\bs j|+2$.

Therefore it is sufficient to prove that for
$i,j\in\Z_{\geq 0}$, we have
\be
\tr (\La^iZ^j \La Z) =\tr( \La^{i+1}Z^{j+1})+\dots,
\ee
where the dots denote the terms of degree
less than $i+j+2$.

By the cyclic property of the trace we have
\be
\tr( \La^iZ^j \La Z)- \tr( \La^i\La Z^j Z) =-\tr (\La^iZ^j K)+\dots,
\ee
where the dots denote the terms of degree
less than $i+j+2$. On the other hand, commuting $\La$ through $Z^j$, we obtain
\be
\tr (\La^iZ^j \La Z)- \tr (\La^i\La Z^j Z)=\sum_{k=0}^{j-1}
\tr (\La^iZ^k K Z^{j-k})+\dots=j\tr (\La^iZ^j K)+\dots,
\ee
where the dots denote the terms of degree
less than $i+j+2$. The claim follows.
\end{proof}

Define the {\it Calogero-Moser universal polynomial} by
\be
\mc P^C=\det((v-\La)(u-Z)-1).
\ee
Write
\be
\Psi^{C}=\sum_{i,j=0}^N m_{ij}\ u^{N-i}v^{N-j}, \qquad m_{ij}\in \mc O^C_N.
\ee
\begin{lem}\
The algebra $\mc O^C_N$ of regular functions on $C^N$ is generated by
$m_{ij}$, $i,j=0,1\lc N$.
\end{lem}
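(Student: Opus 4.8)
The statement to prove is that the coefficients $m_{ij}$, $i,j=0,1,\dots,N$, of the Calogero-Moser universal polynomial $\mc P^C=\det((v-\La)(u-Z)-1)$ generate the algebra $\mc O^C_N$. My plan is to reduce this to Lemma~\ref{Pasha}, which already asserts that the coefficients $\psi^C_{ij}$ of $\Psi^C=\det(1-(v-\La)^{-1}(u-Z)^{-1})$ generate $\mc O^C_N$; so it suffices to show that each $\psi^C_{ij}$ lies in the subalgebra generated by the $m_{ij}$, and conversely. The key relation is the identity
\be
\mc P^C\,=\,\det(v-\La)\,\det(u-Z)\,\Psi^C,
\ee
which holds because $(v-\La)(u-Z)-1 = (v-\La)(u-Z)(1-(u-Z)^{-1}(v-\La)^{-1})$ and determinants are multiplicative. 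Note that $\det(v-\La)\det(u-Z)$ is the leading part of $\mc P^C$: indeed the coefficients $m_{i0}$ and $m_{0j}$ are exactly the coefficients of $\det(v-\La)=v^N+\sum_{j\ge 1}m_{0j}v^{N-j}$ (the $u^N$ coefficient of $\mc P^C$) and $\det(u-Z)=u^N+\sum_{i\ge1}m_{i0}u^{N-i}$ (the $v^N$ coefficient), with $m_{00}=1$. These follow by setting $v$ or $u$ to infinity, exactly as in the proof of Lemma~\ref{multisym gen}.

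Now I would argue in two directions. First, since $\det(v-\La)\det(u-Z)$ has constant term $1$ as a power series in $u^{-1},v^{-1}$, it is invertible in $\mc O^C_N[[u^{-1},v^{-1}]]$, and its inverse has coefficients that are polynomials in the $m_{i0}$ and $m_{0j}$. Therefore
\be
\Psi^C\,=\,(\det(v-\La)\det(u-Z))^{-1}\,\mc P^C
\ee
has all its coefficients $\psi^C_{ij}$ expressible as polynomials in the coefficients of $\mc P^C$, i.e.\ in the $m_{ij}$. By Lemma~\ref{Pasha} the $\psi^C_{ij}$ generate $\mc O^C_N$, hence so do the $m_{ij}$, which gives the inclusion of $\mc O^C_N$ into the algebra generated by the $m_{ij}$; the reverse inclusion is trivial since the $m_{ij}$ are themselves elements of $\mc O^C_N$. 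This completes the proof.

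I do not expect any serious obstacle here: the argument is formally identical to the passage from $\det(1-(u-Z)^{-1}(v-\La)^{-1})$ back to the $p_{ij}$ in Lemma~\ref{multisym gen}, transported to the Calogero-Moser setting. The only point requiring a moment's care is bookkeeping with the two expansion conventions — $\mc P^C$ and $\mc P^P$-type objects are expanded as \emph{polynomials} in $u,v$ (positive powers), while $\Psi^C$ is expanded as a \emph{power series} in $u^{-1},v^{-1}$ — so one should phrase the invertibility of $\det(v-\La)\det(u-Z)$ in the ring $\mc O^C_N[[u^{-1},v^{-1}]]$ and check that the relation $\mc P^C = \det(v-\La)\det(u-Z)\,\Psi^C$ is compatible with both, which it is since $\det(v-\La)\det(u-Z)$ is a genuine polynomial in $u,v$. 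With that observation in place the lemma follows immediately.
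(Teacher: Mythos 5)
Your argument is correct and is essentially identical to the paper's proof: both factor $\mc P^C=\det(v-\La)\det(u-Z)\,\Psi^C$, observe that the coefficients of $\det(u-Z)^{-1}\det(v-\La)^{-1}$ lie in the algebra generated by the $m_{i0}$ and $m_{0j}$, and then invoke Lemma~\ref{Pasha}. (The only cosmetic point is that identifying $\det(1-(u-Z)^{-1}(v-\La)^{-1})$ with $\Psi^C=\det(1-(v-\La)^{-1}(u-Z)^{-1})$ uses $\det(1-AB)=\det(1-BA)$, which is immediate.)
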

\begin{proof}
We have
\be
\det(u-Z)=u^N+\sum_{i=1}^N m_{i0} u^{N-i},\qquad
\det(v-\La)=v^N+\sum_{j=1}^N m_{0j} v^{N-j}.
\ee
In particular, the coefficients of $\det(u-Z)^{-1}$ and $\det(v-\La)^{-1}$
are in the algebra generated by $m_{ij}$. Since
\be
\det(u-Z)^{-1}\det(v-\La)^{-1}\ \mc P^C=\Psi^C,
\ee
the lemma follows from Lemma \ref{Pasha}.
\end{proof}

\subsection{The Bethe algebra and the algebra of functions on the
Calogero-Moser space}
Recall that $\bar{\mc B}_N$ is the image of
the Bethe algebra $\mc B_N\in U(\gln[t])[\bs \la]$ in $\on{End}
(\V_{\bs 1})$ and $\bar b_{ij}$ are the generators of $\bar{\mc B}_N$.

Define the map
\be
\tau_{OB}: \ \mc O^C_N \to \bar{\mc B}_N,\qquad m_{ij}\mapsto \bar b_{ij}.
\ee

\begin{thm}\label{C=B}
The map $\tau$ is a well-defined algebra isomorphism.
\end{thm}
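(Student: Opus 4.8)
The plan is to exhibit a single polynomial identity that forces $\tau_{OB}$ to be both well-defined and an isomorphism, by factoring it through an intermediate object. The key observation is that both $\mc P^C$ and $\mc P^{\bar{\mc B}}$ are ``determinant of the same shape'' in a diagonal matrix of $z$'s and $\la$'s, so the natural strategy is to relate the action of $\bar{\mc B}_N$ on $\V_{\bs 1}$ to the Bethe ansatz, where common eigenvectors of the Bethe algebra are constructed explicitly and their eigenvalues are precisely the expansion coefficients of $\Psi^C$ evaluated at a point of $C_N$. Concretely, first I would use Lemma \ref{comp bethe}, which computes $\mc P^{\bar{\mc B}}\ep_\tau$ as an explicit sum over $\si\in S_N$ of products $\prod_{\si(i)=i}(1-(u-z_{\tau^{-1}(i)})(v-\la_i))$. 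Passing to the quotient $\V_{\bs 1}^{S^R}\cong\C[\bs z,\bs\la]$ via $\on{pr}$ (Lemma \ref{proj lem}), the only surviving term is $\si=\mathrm{id}$, so $\mc P^{\bar{\mc B}}$ acts on $\C[\bs z,\bs\la]$ essentially as multiplication-type operators built from the Cherednik generators $\mc K_i$ of \eqref{cher act}, i.e. as $\prod_i(1-(u-\mc K_i)(v-\la_i))$ in a suitable ordered sense.

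Next I would invoke the Wilson correspondence: every point of $C_N$ is represented by a pair $(Z,\La)$ with $\on{rk}([Z,\La]+1)=1$, and the Calogero-Moser $\Psi$-function $\Psi^C=\det(1-(v-\La)^{-1}(u-Z)^{-1})$ packages the adelic-Grassmannian data attached to that point. The heart of the argument is to show that the joint eigenvalues of the operators $\bar b_{ij}$ on $\V_{\bs1}$ are exactly the numbers $m_{ij}(Z,\La)$ as $(Z,\La)$ ranges over $C_N$, with the eigenvectors given by Bethe vectors. This is where the machinery of \cite{transversal} enters: one constructs, for generic evaluation parameters, the Bethe eigenvectors via the weight function / Wilson $\Psi$-function, checks the eigenvalue is the expansion of the $\Psi$-function, and argues by a dimension/flatness count that these exhaust the spectrum. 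Then $\tau_{OB}$ is the map induced on coordinate rings by the resulting bijection $\mathrm{Spec}\,\bar{\mc B}_N\leftrightarrow C_N$, which is automatically a well-defined algebra homomorphism; surjectivity is built in since the $\bar b_{ij}$ generate $\bar{\mc B}_N$, and injectivity follows because $\mc O^C_N$ is reduced (the Calogero-Moser variety is smooth) and the map is dense on points.

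To make ``well-defined'' precise without circularity, I would run the following logical order. (1) Define a candidate homomorphism $\C[m_{ij}]\to\bar{\mc B}_N$ on the free polynomial algebra by $m_{ij}\mapsto\bar b_{ij}$; this is automatic. (2) Show its kernel contains $I$ — equivalently, every invariant polynomial relation among the matrix entries of $(Z,\La)$ on $\tilde C_N$ holds for the $\bar b_{ij}$. This is exactly the step that requires the Bethe-ansatz input: a relation $F(m_{ij})=0$ in $\mc O^C_N$ means $F$ vanishes at all points of $C_N$, hence $F(\bar b_{ij})$ vanishes on every Bethe eigenvector; since the Bethe vectors span $\V_{\bs1}$ (again by the genericity/completeness argument of \cite{transversal}, transported here since the evaluation parameters are kept formal and the module is flat over $\C[\bs z]$), $F(\bar b_{ij})=0$. (3) Conversely, any relation among the $\bar b_{ij}$ descends to a relation among the $m_{ij}$ by the same completeness, giving injectivity. (4) Surjectivity is Lemma just above the theorem. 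Combining (2)–(4) gives that $\tau_{OB}$ is a well-defined isomorphism.

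\textbf{The main obstacle.} The routine parts — the generation lemmas, the $S_N$-equivariance, the identification $\V_{\bs1}^{S^R}\cong\C[\bs z,\bs\la]$ — are already in place. The real work, and the step I expect to consume most of the proof (Section \ref{proof C=B}), is the \emph{completeness of the Bethe ansatz} in this infinite-dimensional setting: one must show that over the field of rational functions in the evaluation parameters $z_i$ (or at a generic point) the Bethe vectors form a basis of $\V_{\bs1}$, and that the eigenvalue assignment matches the coefficients of $\Psi^C$ under Wilson's identification of $C_N$ with the relevant part of the adelic Grassmannian. This is where one genuinely needs the explicit weight-function construction and the spectral-curve interpretation of $\mc D^{\bar{\mc B}}$; everything else is bookkeeping around the identity $\mc P^{\bar{\mc B}}\leftrightarrow\mc P^C$ under $m_{ij}\mapsto\bar b_{ij}$.
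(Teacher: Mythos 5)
Your proposal is correct and follows essentially the same route as the paper: reduce well-definedness and injectivity to the completeness of the Bethe ansatz at generic evaluation parameters (the paper's Lemma \ref{generic lem}, quoted from \cite{exp}) combined with Wilson's bijection between $C_N$ and spaces of quasi-exponentials (Theorem \ref{C=Q}), then conclude by Zariski density in the formal parameters, with surjectivity coming from the generation lemma. The only cosmetic difference is your opening detour through Lemma \ref{comp bethe} and the Cherednik operators $\mc K_i$, which the paper does not need for this theorem.
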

Theorem \ref{C=B} is proved in Section \ref{proof C=B}.

\begin{cor}\label{cor C=B} The space
$\V_{\bs 1}^{S^L\times S^R}$ is a cyclic $\bar{\mc B}_N$-module which is
isomorphic to the regular representation of $\mc O^C_N$.
\qed
\end{cor}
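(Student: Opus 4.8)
The plan is to deduce Corollary \ref{cor C=B} as a formal consequence of Theorem \ref{C=B} together with the module-theoretic statement already recorded for the spherical subalgebra, by a transport-of-structure argument; no new computation should be needed.

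First I would recall the corollary in the spherical subalgebra subsection, which asserts that $\bar{\mc B}_N$ is isomorphic to $U_N=eH_Ne$ and that the isomorphism $\iota$ carries the cyclic $\bar{\mc B}_N$-module $\V_{\bs 1}^{S^L\times S^R}$ onto the regular representation of $U_N$, with the two algebra actions matching through the algebra isomorphism $\bar{\mc B}_N\cong U_N$. Since $U_N$ is commutative and unital (its unit being the idempotent $e$), its regular representation is free of rank one, generated by the unit. Pulling this back through $\iota$ and through $\bar{\mc B}_N\cong U_N$, I conclude that $\V_{\bs 1}^{S^L\times S^R}$ is free of rank one over $\bar{\mc B}_N$; that is, it is already the regular representation of the Bethe algebra $\bar{\mc B}_N$ itself, with cyclic generator $\iota^{-1}(e)=\frac{1}{N!}\sum_{\si\in S_N}\ep_\si$, which indeed lies in $\V_{\bs 1}^{S^L\times S^R}$ since it is invariant under both the left and the right $S_N$-actions.

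Second I would invoke Theorem \ref{C=B}: the map $\tau_{OB}\colon\mc O^C_N\to\bar{\mc B}_N$, $m_{ij}\mapsto\bar b_{ij}$, is an isomorphism of algebras. Restricting the $\bar{\mc B}_N$-action on $\V_{\bs 1}^{S^L\times S^R}$ along $\tau_{OB}$ makes $\V_{\bs 1}^{S^L\times S^R}$ a module over $\mc O^C_N$. Because $\tau_{OB}$ is an isomorphism of algebras, a module that is free of rank one over $\bar{\mc B}_N$ is free of rank one over $\mc O^C_N$, with the same cyclic generator. Hence $\V_{\bs 1}^{S^L\times S^R}$, viewed as an $\mc O^C_N$-module through $\tau_{OB}$, is isomorphic to the regular representation of $\mc O^C_N$, which is exactly the assertion of the corollary.

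I do not expect a genuine obstacle in this step: all the substance is concentrated in Theorem \ref{C=B} and in the spherical-subalgebra corollary, and Corollary \ref{cor C=B} only repackages them. The single point requiring care is to keep the three identifications straight — the vector-space isomorphism $\iota$, the algebra isomorphism $\bar{\mc B}_N\cong U_N$, and the algebra isomorphism $\tau_{OB}$ — and to observe that each of them sends the notion "free of rank one / regular representation" to the same notion on its target, so that the conclusion follows without any further calculation.
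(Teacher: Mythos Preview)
Your proposal is correct and follows the same route the paper intends: the corollary is marked with \qed\ precisely because it is the combination of Theorem \ref{C=B} with the earlier spherical-subalgebra corollary asserting that $\V_{\bs 1}^{S^L\times S^R}$ is the regular representation of $U_N\cong\bar{\mc B}_N$ via $\iota$. Your transport-of-structure argument, including the identification of the cyclic generator $\iota^{-1}(e)=\frac{1}{N!}\sum_{\si\in S_N}\ep_\si$, just makes explicit what the paper leaves implicit.
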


\section{Proof of Theorem \ref{C=B}}\label{proof C=B}

\subsection{Spaces of quasi-exponentials}\label{quasi sec}
For any $n\in\Z_{>0}$ and complex numbers \,$\la_1^0\lc\la_n^0$,
we call a complex vector space $W$ of dimension $n$ with a basis of the form
\,$q_1(u)e^{\la_1^0u}\lc q_n(u)e^{\la_n^0u}$, where $q_i(u)\in\C[u]$,
\,$i=1\lc n$, a {\it space of quasi-exponentials with exponents
$\bs\la^0=(\la_1^0\lc\la_n^0)$}.

Let
\be
\on{Wr}_W\,=\,c\,\det\left(\partial^{j-1}\bigl(q_i(u)
e^{\la_i^0 u}\bigr)\right)_{i,j=1}^n e^{-\sum_{i=1}^n\la_i^0u}
\ee
where $c\in\C$ is a non-zero constant such that $\on{Wr}_W$ is a monic
polynomial. The polynomial $\on{Wr}_W$ does not depend on the choice of a basis
in $W$ and is called the {\it Wronskian of W}. \,We denote the $\deg\on{Wr}_W$
simply by $\deg W$.

Zeros of Wronskian $\on{Wr}_W$ are called {\it singular points\/} of $W$.
The number of singular points counted with multiplicity equals $\deg W$.

We denote by $\mc D^{\mc W}_W$ the monic scalar differential operator of order
$n$ with kernel $W$. The operator $\mc D^{\mc W}_W$ is Fuchsian with singular
points exactly at the singular points of $W$ and infinity.

Write
\be
\mc D^{\mc W}_W=\partial^n+\sum_{i=1}^n G_{i,W}(u) \partial^{n-i}.
\ee
We have
\be
v^n+\sum_{i=1}^n G_{i,W}(u) v^{n-i}=
\frac{c}{\on{Wr}_W}\ \cdot\ \det\left(\begin{matrix}
q_1(u)e^{\la_1^0u} & \partial(q_1(u)e^{\la_1^0u})
&\dots & \partial^n(q_1(u)e^{\la_1^0u})\\
\dots & \dots & \dots &\dots \\
q_n(u)e^{\la_n^0u} & \partial(q_n(u)e^{\la_n^0u})
&\dots & \partial^n(q_n(u)e^{\la_n^0u}) \\
1 & v & \dots & v^n
\end{matrix}\right).
\ee

We call the function
\be
\Psi^{\mc W}_W\,=\,\Bigl(v^n+\sum_{i=1}^n G_{i,W}(u) v^{n-i}\Bigr)\,
\prod_{i=1}^n(v-\la_i^0)^{-1}
\ee
the {\it $\Psi$-function of the space W}. The $\Psi$-function is a
formal power series in $u^{-1}$ and $v^{-1}$
with complex coefficients. Moreover, it has the form
\be
\Psi^{\mc W}_W=1+\sum_{i,j=1}^\infty \psi_{ij,W}^{\mc W}u^{-i}v^{-j}, \qquad
\psi_{ij,W}^{\mc W}\in\C.
\ee

Let $W_1$ and $W_2$ be spaces of quasi-exponentials of possibly
different dimensions. We call the spaces $W_1$ and $W_2$ {\it
equivalent} if $\Psi^{\mc W}_{W_1}=\Psi^{\mc W}_{W_2}$. This defines
an equivalence relation on the set of spaces of quasi-exponentials.

We call a space of quasi-exponentials $W$ {\it minimal} if $W$ does
not contain a function of the form $e^{\la^0u}$ with $\la^0\in\C$.

For $\la^0\in \C$, let $W(\la^0)\subset W$ be the subspace spanned by
all function in $W$ of the form $q(u)e^{\la^0u}$, $q(u)\in\C[u]$. Then
$W(\la^0)$ is also a space of quasi-exponentials. We call a space of
quasi-exponentials $W$ {\it canonical} if for every $\la^0\in \C$ we
have the equality
\be
\dim W(\la^0)=\deg W(\la^0).
\ee
Note that for canonical spaces of quasi-exponentials we have $\dim
W=\deg W$ and for minimal spaces we have $\dim W\leq\deg W$.

\begin{lem}\label{equiv lem}
Each equivalence class of spaces of quasi-exponentials contains exactly
one minimal and exactly one canonical space of quasi-exponentials.
If $W_1$ and $W_2$ are equivalent spaces of quasi-exponentials, then
$\on{Wr}_{W_1}=\on{Wr}_{W_2}$ in particular, $\deg W_1=\deg W_2$.
If in addition, $W_1$ is minimal and $W_2\neq W_1$, then \,$\dim W_1<\dim W_2$.
\end{lem}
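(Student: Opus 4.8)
The plan is to build the minimal and canonical representatives constructively from an arbitrary space $W$ and to extract uniqueness from the fact that the $\Psi$-function determines enough data. First I would unpack what the $\Psi$-function remembers. From $\Psi^{\mc W}_W=\bigl(v^n+\sum_i G_{i,W}(u)v^{n-i}\bigr)\prod_i(v-\la_i^0)^{-1}$ one reads off, by clearing denominators and comparing leading behaviour in $v$, both the multiset of exponents $\{\la_1^0\lc\la_n^0\}$ (as the poles in $v$, with multiplicity) and the monic differential operator $\mc D^{\mc W}_W$ (as the numerator). Hence two equivalent spaces have the same exponents and the same annihilating operator $\mc D^{\mc W}_W$; since a space of quasi-exponentials of dimension $n$ is exactly the kernel of that order-$n$ operator, the space $W$ is recovered from $\mc D^{\mc W}_W$ together with its order. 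The only freedom in an equivalence class is therefore the order $n$: equivalent spaces are the kernels of the various monic operators $L$ with $L=\mc D^{\mc W}_W\cdot R$ for some monic $R$, subject to $R$ having only exponential solutions (so that $L$ still has a quasi-exponential kernel) — concretely $R=\prod(\partial-\mu_k)$ acting compatibly. This also instantly gives $\on{Wr}_{W_1}=\on{Wr}_{W_2}$: the Wronskian is visible from $\mc D^{\mc W}_W$ (it is, up to the monic normalization, the denominator of the leading coefficient, equivalently the polynomial whose zeros are the finite singular points with the right multiplicities), and $\mc D^{\mc W}_W$ is an equivalence invariant.

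Next, existence and uniqueness of the minimal representative. Given $W$, if $W$ contains some $e^{\la^0u}$, replace $W$ by $\partial_{\la^0}$-type reduction: more precisely, if $e^{\la^0u}\in W$ then $\mc D^{\mc W}_W=\bar L\,(\partial-\la^0)$ for a monic operator $\bar L$ of order $n-1$ whose kernel $\bar W$ is again a space of quasi-exponentials, equivalent to $W$ (the $\Psi$-functions agree because the numerator/denominator pair is the same after cancelling the factor $(v-\la^0)$). Iterating strictly decreases the dimension, so after finitely many steps we reach a minimal space $W_{\min}$ in the class. For uniqueness, suppose $W_1,W_2$ are both minimal and equivalent. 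They annihilate the same operator $\mc D:=\mc D^{\mc W}_{W_1}=\mc D^{\mc W}_{W_2}$, of orders $n_1,n_2$ respectively; minimality of $W_i$ forces $n_i$ to equal the order of $\mc D$ itself (if $n_i$ exceeded it, $\mc D^{\mc W}_{W_i}$ would be $\mc D$ composed with a nontrivial monic factor, forcing an exponential solution in $W_i$, contradicting minimality — here one uses that a factor $\partial-\mu$ on the right contributes the solution $e^{\mu u}$). Hence $n_1=n_2$ and $W_1=\ker\mc D=W_2$. The same argument shows the last assertion: if $W_1$ is minimal and $W_2$ is equivalent with $W_2\neq W_1$, then $n_2>n_1$, i.e.\ $\dim W_1<\dim W_2$.

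Finally, the canonical representative. For each exponent $\la^0$ appearing, the block $W(\la^0)$ is itself a space of quasi-exponentials with the single exponent $\la^0$, hence of the form $e^{\la^0u}\cdot(\text{a space of polynomials }P_{\la^0})$, and being canonical for $W$ means $\dim P_{\la^0}=\deg P_{\la^0}$, i.e.\ $P_{\la^0}$ is the full polynomial space of some degree, equivalently $W(\la^0)$ is the kernel of a pure power $\partial^{d_{\la^0}}$ shifted by $\la^0$. I would construct $W_{\can}$ as the direct sum over exponents $\la^0$ of the canonical block with the correct $\deg$: the degrees are determined by the $\Psi$-function because they are read off from the pole structure in $v$ together with the local exponents of $\mc D^{\mc W}_{W_{\min}}$ at infinity (equivalently, from $\mc D^{\mc W}_{W_{\min}}$ itself via partial-fraction data). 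One checks $W_{\can}$ is equivalent to $W_{\min}$ by a direct computation of $\Psi$ on a direct sum (the numerator is the product-over-blocks determinant and the denominator factors correspondingly), and canonicity is built in. Uniqueness of $W_{\can}$: a canonical space satisfies $\dim W=\deg W$ with equality blockwise, so it is reconstructed from the collection of pairs $(\la^0,\deg W(\la^0))$, and those are equivalence invariants by the first paragraph.

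The main obstacle I anticipate is the bookkeeping in the reduction step --- verifying cleanly that ``dividing out'' an exponential solution $e^{\la^0u}$ on the right of $\mc D^{\mc W}_W$ produces an operator that is again Fuchsian with quasi-exponential kernel and with the \emph{same} $\Psi$-function, and dually that multiplying by $(\partial-\mu)$ on the right forces $e^{\mu u}$ into the kernel. Once that lemma-within-the-lemma is pinned down, every claim in the statement --- existence/uniqueness of minimal and canonical representatives, equality of Wronskians, and the strict dimension inequality --- falls out from the single principle that the $\Psi$-function is equivalent data to the pair (annihilating operator, set of exponents), with the order being the only remaining degree of freedom in a class.
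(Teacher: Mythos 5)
The paper offers no argument here (its ``proof'' is the single sentence ``The lemma is straightforward''), so your proposal has to stand on its own, and it does not quite: the minimal-representative half is essentially right but rests on a misstatement, and the canonical-representative half is genuinely wrong. On the first point: you assert that the exponents and the operator $\mc D^{\mc W}_W$ can be read off from $\Psi^{\mc W}_W$ and are therefore equivalence invariants; they are not (your own later sentences, where equivalent spaces have operators differing by a right factor $R$, contradict this). The correct statement, which rescues your argument, is this. Write $\Psi^{\mc W}_W=N_W(u,v)/D_W(v)$ with $N_W=v^n+\sum_iG_{i,W}(u)v^{n-i}$ and $D_W=\prod_i(v-\la_i^0)$. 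Since $\mc D^{\mc W}_W e^{\mu u}=N_W(u,\mu)e^{\mu u}$, one has $e^{\mu u}\in W$ iff $N_W(u,\mu)\equiv0$, so $W$ is minimal iff $\gcd(N_W,D_W)=1$ in $\C(u)[v]$. Then $N_{W_1}D_{W_2}=N_{W_2}D_{W_1}$ with $W_1$ minimal forces $D_{W_2}=D_{W_1}R$ and $N_{W_2}=N_{W_1}R$ for a monic $R(v)$ with constant coefficients; this gives uniqueness of the minimal representative, the inequality $\dim W_1<\dim W_2$, and, by comparing the subleading coefficients of $N$ and $D$ via Abel's identity $G_{1,W}=-\on{Wr}_W'/\on{Wr}_W-\sum_i\la_i^0$, the equality of Wronskians. (Your stated reason for the Wronskian claim --- that $\mc D^{\mc W}_W$ itself is an invariant --- is false.) Your factorization step $\mc D^{\mc W}_W=L\circ(\partial-\mu)$ and the check that $\ker L$ has the same $\Psi$-function are fine.

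The canonical part fails at its first step. The condition $\dim W(\la^0)=\deg W(\la^0)$ does \emph{not} say that the polynomial part of $W(\la^0)$ is a full polynomial space, i.e.\ that $W(\la^0)=\ker(\partial-\la^0)^d$: that space has Wronskian of degree $0$, hence $\deg W(\la^0)=0\neq d$. (A generic space $\langle(u-h_i^0)e^{\la_i^0u}\rangle$ is canonical, and none of its blocks is a full polynomial space.) Consequently the object you build as a direct sum of such blocks is not canonical and is not in the equivalence class; moreover the $\Psi$-function of a direct sum is not the product over blocks (Wronskians of direct sums have cross terms), and a canonical space is not reconstructible from the pairs $(\la^0,\deg W(\la^0))$ alone, so your uniqueness argument collapses as well. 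The repair: for a minimal $W$ each block satisfies $\deg W(\la^0)\geq\dim W(\la^0)$ (the reduced degrees in the block are $\geq1,\geq2,\dots$); right-composing $\mc D^{\mc W}_W$ with $(\partial-\la^0)$ raises $\dim W(\la^0)$ by one while leaving every $\deg W(\la)$ unchanged, whereas introducing a new exponent $\mu$ creates a block with $\dim=1>\deg=0$ that can never be fixed. Hence the unique canonical member of the class is $\ker\bigl(\mc D^{\mc W}_{W_{\min}}\circ\prod_{\la^0}(\partial-\la^0)^{\deg W_{\min}(\la^0)-\dim W_{\min}(\la^0)}\bigr)$, and no other choice of $R$ works.
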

\begin{proof}
The lemma is straightforward.
\end{proof}
Define the {\it degree of an equivalence class of spaces of
quasi-exponentials} as the degree of any of representative of this
class. Denote the set of all equivalence classes of quasi-exponentials
of degree $N$ by $Q_N$.

We call a space of quasi-exponentials {\it generic} if all $\la_i^0$ are
distinct and all $q_i(u)$ are linear polynomials. A space of quasi-exponentials
is generic if and only if it is both canonical and minimal.

To a point $(\bs h^0,\bs \la^0) \in\C^N\times \C^N$ such that all $\la_i^0$
are distinct, we associate a generic space of quasi-exponentials
\be
W_{\bs h^0,\bs\la^0}=\langle (u-h_i^0)\ e^{\la_i^0 u}, \
i=1\lc N\rangle.
\ee
Generic spaces of quasi-exponentials of degree $N$
are in a bijective correspondence
with $S_N^{h,\la}$ orbits of points $(\bs h^0,\bs \la^0) \in\C^N\times
\C^N$ such that all $\la_i^0$ are distinct.

\subsection{The Bethe ansatz}
Let $\bs z^0=(z_1^0\lc z_N^0)$ and $\bs\la^0=(\la_1^0\lc
\la_N^0)$ be sequences of complex numbers.

Consider $V^{\otimes N}$ as the tensor product of evaluation
$\gln[t]$-modules with evaluation parameters $z_1^0\lc z_N^0$.
Namely, the action of $g\otimes t^k\in\gln[t]$ is given by
\be
(g\otimes t^k)(v_1\otimes\dots\otimes v_N)=
\sum_{i=1}^N (z_i^0)^k\ v_1\otimes\dots \otimes gv_i\otimes\dots \otimes v_N.
\ee
Let $\la_i$ act on $V^{\otimes N}$ as multiplication by $\la_i^0$:
\be
\la_i (v_1\otimes\dots\otimes v_N)=\la_i^0\ v_1\otimes\dots\otimes v_N.
\ee
Then $(V^{\otimes N})_{\bs 1}$ is a $\mc B_N$-module which we denote by
$(V^{\otimes N}(\bs z^0, \bs\la^0))_{\bs 1}$.

Let $v\in (V^{\otimes N}(\bs z^0, \bs\la^0))_{\bs 1}$ be
an eigenvector of the Bethe algebra,
$B_{ij} v= B_{ij,v} v$, where $B_{ij,v}\in\C$.
Consider the scalar differential operator
\be
\mc D^{\mc B}_v=\partial^N+
\sum_{i=1}^N\sum_{j=0}^\infty B_{ij,v}u^{-j}\partial^{N-i}.
\ee

\begin{lem}[\cite{quasi}]\label{distinct}
If $z_i^0\neq z_j^0$ and $\la_i^0\neq \la_j^0$ whenever $i\neq j$,
then the kernel of the operator $\mc D^{\mc B}_v$ is a generic space
of quasi-exponentials of degree $N$
with exponents $\bs \la^0$ and singular points
$\bs z^0$.
\qed
\end{lem}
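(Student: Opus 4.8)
The plan is to analyze the scalar differential operator $\mc D^{\mc B}_v$ directly, determine its singularities and local exponents, and thereby identify its kernel with a concrete generic space of quasi-exponentials. First I would pin down the shape of $\mc D^{\mc B}_v$. From the action \Ref{action} of $\gln[t]$ on $\V$ and the polynomiality observation underlying the universal Bethe polynomial (following \cite{capelli} and Lemma~\ref{comp bethe}), the operator $w(u,\bs z^0)\,\mc D^{\mc B}_v$ has polynomial coefficients; hence $\mc D^{\mc B}_v$ is a monic operator of order $N$, regular on $\C\setminus\{z_1^0\lc z_N^0\}$ with at most simple poles of its coefficients at the $z_i^0$. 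Near $u=\infty$, replacing each series $e_{ij}(u)$ by its value $O(u^{-1})$ shows that the leading part of the row determinant defining $\mc D^{\mc B}$ is $\prod_{i=1}^N(\partial-\la_i^0)$, so the indicial roots of $\mc D^{\mc B}_v$ at infinity are $\la_1^0\lc\la_N^0$, and every solution has an asymptotic expansion of the form $e^{\la_i^0u}u^{\kappa_i}(1+O(u^{-1}))$.

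Next I would compute the local data. At each $z_i^0$ the $i$-th tensor factor of $V^{\otimes N}$ is the vector representation, the irreducible $\gln$-module of highest weight $\om_1$; since $\sum_ke_{kk}$ acts on it as the identity, the residue of the $\partial^{N-1}$-coefficient of $\mc D^{\mc B}_v$ at $z_i^0$ is $-1$, and the indicial equation there has roots $0,1\lc N-2,N$. These are distinct non-negative integers, and it is a standard property of the Gaudin operators attached to polynomial $\gln$-modules that no logarithmic solutions occur, so every solution of $\mc D^{\mc B}_v$ is holomorphic at each $z_i^0$ and hence entire. At infinity I would extract the subleading term of $\on{rdet}$, using that $v$ has $\sln$-weight zero, so that $\sum_ke_{kk}(u)$ acts as $w'(u,\bs z^0)/w(u,\bs z^0)$; a short computation should give $\kappa_i=1$ for every $i$.

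It then remains to identify the kernel. Choose a basis $f_1\lc f_N$ of $\ker\mc D^{\mc B}_v$ adapted to the distinct exponents $\la_1^0\lc\la_N^0$, so that $e^{-\la_i^0u}f_i$ is entire with asymptotics $\sim c_i u$. Because the $\la_i^0$ are pairwise distinct, the irregular singularity of $\mc D^{\mc B}_v$ at infinity is of generic exponential type, its formal fundamental solution is exact, and so $e^{-\la_i^0u}f_i$ is an entire function of polynomial growth of degree $\le 1$, hence a degree-$1$ polynomial $q_i(u)$. Therefore $W:=\ker\mc D^{\mc B}_v=\langle q_i(u)e^{\la_i^0u}\mid i=1\lc N\rangle$ is a generic space of quasi-exponentials with exponents $\bs\la^0$, and $\mc D^{\mc B}_v=\mc D^{\mc W}_W$. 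Its Wronskian is then the monic polynomial whose zeros are the genuine singular points of $\mc D^{\mc B}_v$; each $z_i^0$ is genuinely singular since its exponent $N$ exceeds $N-1$, these are all the finite singularities, and $\deg\on{Wr}_W=\deg W=\sum_i\deg q_i=N$, so $\on{Wr}_W=w(u,\bs z^0)$. Hence $W$ has degree $N$, exponents $\bs\la^0$, and singular points $\bs z^0$, as claimed.

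I expect the two delicate points to be the exponent computation at infinity (careful bookkeeping of the subleading terms of $\on{rdet}$, where the zero $\sln$-weight hypothesis is essential) and, above all, the passage from the \emph{asymptotic} relation $f_i\sim q_i(u)e^{\la_i^0u}$ to the \emph{exact} equality, i.e.\ ruling out Stokes-type corrections at infinity; this last step is the real crux. An alternative route that avoids it goes through the Bethe ansatz: for generic $\bs z^0,\bs\la^0$ the Bethe ansatz equations have $N!$ solutions modulo the $S_N$-symmetry, each producing an eigenvector $v(\bs t)$ of $\mc B_N$ together with an explicit generic space $W(\bs t)$ of quasi-exponentials satisfying $\mc D^{\mc B}_{v(\bs t)}=\mc D^{\mc W}_{W(\bs t)}$; since these $N!$ eigenvectors are linearly independent and $\dim(V^{\otimes N})_{\bs 1}=N!$, the spectrum of $\mc B_N$ on $(V^{\otimes N}(\bs z^0,\bs\la^0))_{\bs 1}$ is simple and every eigenvector equals some $v(\bs t)$. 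In that approach the substantial input is the completeness of the Bethe ansatz for generic parameters.
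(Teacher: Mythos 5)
The paper offers no proof of Lemma~\ref{distinct}: it is imported wholesale from \cite{quasi}, and the two facts you single out as delicate are precisely the substance of that reference. Your local analysis is a sound skeleton --- the operator is monic of order $N$, its coefficients have at most simple poles at the $z_i^0$ by the polynomiality of the universal Bethe polynomial from \cite{capelli}, the indicial roots at each $z_i^0$ are $0,1,\dots,N-2,N$, and the formal exponents at infinity are $\la_1^0,\dots,\la_N^0$ --- but it leaves two genuine holes. First, at each $z_i^0$ the exponents are integers differing by integers, so general Fuchsian theory does not exclude logarithmic solutions; the claim that no logarithms occur is not a ``standard property'' of operators attached to polynomial modules but a theorem about eigenvectors of the Bethe algebra, and it is one of the main points of \cite{quasi} (and of the analogous polynomial-kernel statements underlying \cite{transversal}). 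Second, at the irregular singular point at infinity the formal solutions $q_i(u)e^{\la_i^0u}$ need not be exact; you correctly flag this as the crux but do not close it. Since both unproven steps are exactly what the cited theorem supplies, the main line of your argument is circular at its core.

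The Bethe-ansatz fallback does not rescue the full statement either. Completeness of the Bethe ansatz holds only for $(\bs z^0,\bs\la^0)$ in a Zariski-open subset --- that is the content of Lemma~\ref{generic lem} --- whereas Lemma~\ref{distinct} asserts the conclusion for \emph{every} eigenvector whenever the $z_i^0$ are pairwise distinct and the $\la_i^0$ are pairwise distinct, including non-generic such points where the spectrum of $\mc B_N$ on $(V^{\otimes N}(\bs z^0,\bs\la^0))_{\bs 1}$ need not be simple and eigenvectors need not arise from Bethe vectors. Passing from the generic case to all distinct parameters requires a separate degeneration argument, and continuity alone does not immediately preserve the ``generic space'' property of the kernel. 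The actual proof in \cite{quasi} constructs the quasi-exponential solutions algebraically from the eigenvector and works directly under the distinctness hypothesis; if you want a self-contained argument you should either reproduce that construction or supply independent proofs of the no-logarithm and exactness-at-infinity statements.
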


We need a statement which says that generically the $\mc B_N$-module
$(V^{\otimes N}(\bs z^0, \bs\la^0))_{\bs 1}$ is the sum of
non-isomorphic one-dimensional modules which are in a bijective
corre\-spon\-dence with generic spaces of quasi-exponentials of degree
$N$ with exponents $\bs \la^0$ and singular points $\bs z^0$. Such a
statement is proved by the Bethe ansatz method.

Recall that the generic spaces of quasi-exponentials $W_{\bs h^0,\bs
\la^0}$ of degree $N$
are param\-e\-ter\-ized by $(\bs h^0,\bs\la^0)\in\C^N\times\C^N$
such that $\la_i^0$ are all distinct, see Section \ref{quasi sec}.
\begin{lem}\label{generic lem}
There exist Zariski open $S_N$ invariant subsets $\Theta$ and $\Xi$
of $\C^N\times\C^N$ such that
\begin{enumerate}
\item[(i)] For any $(\bs z^0,\bs \la^0)\in \Theta$, we have $z_i^0\neq
z_j^0$ and $\la_i^0\neq \la_j^0$ for $i\neq j$, and there exists a
basis for $(V^{\otimes N}(\bs z^0, \bs\la^0))_{\bs 1}$ such that
every basis vector $v$ is an eigenvector of the Bethe algebra, and
$\mc D^{\mc B}_v=\mc D^{\mc W}_{W_{\bs h^0,\bs \la^0}}$, where $(\bs
h^0,\bs \la^0)\in\Xi$.

\item[(ii)] For any $(\bs h^0,\bs \la^0) \in \Xi$, there exists a
unique up to a permutation $(\bs z^0,\bs\la^0)\in\Theta$ and a
unique up to proportionality vector $v\in (V^{\otimes N}(\bs z^0,
\bs\la^0))_{\bs 1}$ such that $v$ is an eigenvector of the Bethe
algebra and $\mc D^{\mc B}_v= \mc D^{\mc W}_{W_{\bs h^0,\bs
\la^0}}$.
\end{enumerate}
\end{lem}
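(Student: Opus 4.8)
The plan is to run the Bethe ansatz for the Gaudin model associated to $\gln$ acting on $(V^{\otimes N}(\bs z^0, \bs \la^0))_{\bs 1}$, combining three ingredients: the completeness of the Bethe ansatz for generic parameters, the description of eigenvalues via the operator $\mc D^{\mc B}_v$ from Lemma \ref{distinct}, and a dimension count. For generic $(\bs z^0, \bs \la^0)$ the zero-weight subspace $(V^{\otimes N})_{\bs 1}$ has dimension $N!$, and I would first observe that the number of generic spaces of quasi-exponentials of degree $N$ with fixed distinct exponents $\bs \la^0$ is also $N!$: such a space is $W_{\bs h^0, \bs \la^0}=\langle (u-h_i^0)e^{\la_i^0 u}\rangle$, and once the exponents $\la_1^0\lc\la_N^0$ are fixed in a given order, the tuple $(h_1^0\lc h_N^0)$ is determined by the space, while the $N!$ orderings of the exponents give $N!$ distinct spaces. (Equivalently: the Wronskian $\on{Wr}_{W_{\bs h^0,\bs \la^0}}=\prod_i(u-z_i^0)$ with $z_i^0$ the singular points, and the map $(\bs h^0,\bs \la^0)\mapsto (\bs z^0,\bs\la^0)$ is, generically, an $N!$-to-$1$ covering onto its image modulo the diagonal $S_N$ action.) This matches the dimension of the module, so it is enough to show that for generic parameters the Bethe ansatz produces $N!$ linearly independent eigenvectors realizing exactly these spaces.

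Concretely, I would take the Bethe vectors $w(\bs t)\in (V^{\otimes N}(\bs z^0, \bs \la^0))_{\bs 1}$ of the $\gln$ Gaudin model depending on $N(N-1)/2$ auxiliary variables $\bs t$ distributed among $N-1$ "colors" as dictated by the weight $\bs 1$, and write the Bethe equations for $(\bs z^0,\bs\la^0)$. The standard theory (as used in \cite{transversal}, \cite{quasi}) gives: (a) if $\bs t$ solves the Bethe equations then $w(\bs t)$ is an eigenvector of $\mc B_N$, and the associated scalar operator $\mc D^{\mc B}_{w(\bs t)}$ is Fuchsian with exponents $\bs \la^0$ at infinity and singular points among $\bs z^0$, so by Lemma \ref{distinct} its kernel is a generic space $W_{\bs h^0,\bs\la^0}$; (b) conversely, each generic space $W_{\bs h^0,\bs \la^0}$ of degree $N$ with singular points $\bs z^0$ arises this way from a solution of the Bethe equations, by reconstructing the factorization of $\mc D^{\mc W}_W$ into first-order factors and reading off $\bs t$ from the successive kernels. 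I would then invoke the known result that for $(\bs z^0, \bs\la^0)$ outside a proper Zariski-closed subset the Bethe equations have exactly $N!$ solutions (modulo the permutations of same-color variables), the corresponding Bethe vectors are nonzero, pairwise non-proportional, and linearly independent, hence form a basis of $(V^{\otimes N}(\bs z^0, \bs\la^0))_{\bs 1}$. This gives part (i), with $\Theta$ the locus where all $z_i^0$ are distinct, all $\la_i^0$ are distinct, and the Bethe ansatz is complete in this sense, and $\Xi$ the image of $\Theta$ under $(\bs z^0,\bs\la^0)\mapsto(\bs h^0,\bs\la^0)$ (intersected with its $S_N$-saturation), which is Zariski open and $S_N$-invariant by construction; note $\Xi$ is contained in the locus where all $\la_i^0$ are distinct.

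For part (ii), given $(\bs h^0,\bs\la^0)\in\Xi$ I would set $\bs z^0$ to be the zeros of $\on{Wr}_{W_{\bs h^0,\bs\la^0}}=\prod_i(u-z_i^0)$; genericity of $\Xi$ forces these to be distinct and forces $(\bs z^0,\bs\la^0)\in\Theta$, and it is the only such tuple up to permutation since the Wronskian together with the exponents determines the singular locus. Uniqueness up to proportionality of the eigenvector $v$ then follows because, by part (i), the $N!$ basis eigenvectors of $(V^{\otimes N}(\bs z^0,\bs\la^0))_{\bs 1}$ have pairwise distinct associated operators $\mc D^{\mc B}_v$ (the joint spectrum of $\bar{\mc B}_N$ is simple on the generic locus, which is where shrinking $\Theta$ further if necessary is used), so exactly one of them has $\mc D^{\mc B}_v=\mc D^{\mc W}_{W_{\bs h^0,\bs\la^0}}$. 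The main obstacle is the completeness-of-Bethe-ansatz input: one must cite (or extract from \cite{transversal}, \cite{capelli}, \cite{quasi}) that for generic $(\bs z^0,\bs\la^0)$ the Bethe vectors really do span the whole zero-weight space and that distinct solutions give distinct eigenlines; establishing the relevant Zariski-open conditions (nonvanishing of the Shapovalov-type norms of Bethe vectors, simplicity of the spectrum, properness of the count of Bethe roots) is where the actual work lies, whereas the translation between operators and spaces of quasi-exponentials and the counting argument are essentially formal given Lemmas \ref{equiv lem} and \ref{distinct}.
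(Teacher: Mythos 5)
Your proposal is correct in outline and takes essentially the same route as the paper: the paper's entire proof is the single line ``This lemma is a special case of Lemma 6.1 in \cite{exp}'', and that cited lemma is precisely the Bethe-ansatz completeness statement you sketch (the paper itself announces, just before the lemma, that ``such a statement is proved by the Bethe ansatz method''). You correctly isolate the completeness of the Bethe ansatz for generic $(\bs z^0,\bs\la^0)$ as the input to be imported from the literature rather than reproved, so your write-up is an expanded version of the same argument.
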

\begin{proof}
This lemma is a special case of Lemma 6.1 in \cite{exp}.
\end{proof}

\subsection{The modules $\V_{\bs 1}(\bs a^0,\bs b^0)$}
Let $\si_i(\bs z)$ and $\si_i(\bs \la)$, $i=1\lc N$,
be the elementary symmetric functions
\be
\prod_{i=1}^N(u-z_i)=u^N+\sum_{i=1}^{N}(-1)^i\si_i(\bs z)u^{N-i}, \qquad
\prod_{i=1}^N(v-\la_i)=v^N+\sum_{i=1}^{N}(-1)^i\si_i(\bs \la)v^{N-i}.
\ee

For $(\bs a^0,\bs b^0)\in\C^N\times \C^N$ let $I_{\bs a^0,\bs
b^0}\subset\C[\bs z,\bs \la]$ be the ideal generated by the
functions $\si_i(\bs z)=a_i^0$ and $\si_i(\bs \la)=b_i^0$, $i=1\lc
N$.

Set
\be
\V_{\bs 1}(\bs a^0,\bs b^0)=\V_{\bs 1}^{S^L\times S^R}/
(\V_{\bs 1}^{S^L\times S^R}\bigcap
(V^{\otimes N})_{\bs 1}\otimes I_{\bs a^0,\bs b^0}).
\ee
The action of the Bethe algebra $\mc B_N$ in $\V_{\bs 1}$ induces an action
of the algebra $\mc B_N$ in the space $\V_{\bs 1}(\bs a^0,\bs b^0)$.

Let $(\bs z^0,\bs \la^0)\in\C^N\otimes \C^N$ be such that
$\si_i(\bs z^0)=a_i^0$, $\si_i(\bs \la^0)=b_i^0$:
\be
\prod_{i=1}^N(u-z_i^0)=u^N+\sum_{i=1}^{N}(-1)^ia_i^0u^{N-i}, \qquad
\prod_{i=1}^N(v-\la_i^0)=v^N+\sum_{i=1}^{N}(-1)^ib_i^0v^{N-i}.
\ee

The following lemma is proved by standard methods.
\begin{lem} We have
$\dim \V_{\bs 1}(\bs a^0,\bs b^0)=N!$. If all $z_i^0$ are distinct and all
$\la_i^0$ are distinct then the $\mc B_N$-modules $\V_{\bs 1}(\bs
a^0,\bs b^0)$ and $(V^{\otimes N}(\bs z^0,\bs \la^0))_{\bs 1}$ are
isomorphic.
\qed
\end{lem}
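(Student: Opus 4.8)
The plan is to prove the two claims in turn, transporting each to a statement about the algebra $P_N$ of multi-symmetric polynomials through the vector-space isomorphism $\on{pr}\colon\V_{\bs 1}^{S^L\times S^R}\xrightarrow{\sim}P_N$ of Lemma~\ref{proj lem}, and using that $P_N$ is a free module of rank $N!$ over $R:=\C[\bs z]^{S_N^z}\otimes\C[\bs\la]^{S_N^\la}$, recorded early in Section~\ref{sec Z=B}. Concretely, $\on{pr}^{-1}$ sends $p\in P_N$ to $\sum_{\si\in S_N}(\si^Lp)\,\ep_\si$, where $(\si^Lp)(\bs z,\bs\la)=p(\bs z,\la_{\si(1)}\lc\la_{\si(N)})$; in particular $\on{pr}$ is $R$-linear. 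Write $\mathfrak m\subset R$ for the maximal ideal generated by $\si_i(\bs z)-a_i^0$ and $\si_i(\bs\la)-b_i^0$, $i=1\lc N$, so that $I_{\bs a^0,\bs b^0}=\mathfrak m\,\C[\bs z,\bs\la]$ and $R/\mathfrak m=\C$.

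For the dimension, observe that $I_{\bs a^0,\bs b^0}$ is stable under $S_N^z$ and under $S_N^\la$, its generators being so. Hence $\sum_\si(\si^Lp)\,\ep_\si$ lies in $(V^{\otimes N})_{\bs 1}\otimes I_{\bs a^0,\bs b^0}$ iff $p\in I_{\bs a^0,\bs b^0}$, so $\on{pr}$ identifies $\V_{\bs 1}^{S^L\times S^R}\cap\bigl((V^{\otimes N})_{\bs 1}\otimes I_{\bs a^0,\bs b^0}\bigr)$ with $P_N\cap I_{\bs a^0,\bs b^0}$. Applying the $R$-linear Reynolds operator $\tfrac1{N!}\sum_{\si\in S_N}\si^{z,\la}$, which restricts to the identity on $P_N$, to an element of $P_N\cap I_{\bs a^0,\bs b^0}$ written as an $R$-linear combination of generators of $I_{\bs a^0,\bs b^0}$ gives $P_N\cap I_{\bs a^0,\bs b^0}=\mathfrak m P_N$. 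Consequently $\V_{\bs 1}(\bs a^0,\bs b^0)\cong P_N/\mathfrak m P_N\cong(R/\mathfrak m)\otimes_R P_N\cong\C^{N!}$, and the dimension is $N!$.

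Next assume all $z_i^0$ and all $\la_i^0$ are distinct. Let $\on{ev}\colon\V_{\bs 1}\to(V^{\otimes N}(\bs z^0,\bs\la^0))_{\bs 1}$ be the evaluation map $p(\bs z,\bs\la)\,v\mapsto p(\bs z^0,\bs\la^0)\,v$. Comparing the defining formula for the action of $g(u)$ on $\V$ with the evaluation-module action on $V^{\otimes N}$ with parameters $\bs z^0,\bs\la^0$ shows at once that $\on{ev}$ is a homomorphism of $U(\gln[t])[\bs\la]$-modules, hence of $\mc B_N$-modules. Since each generator of $I_{\bs a^0,\bs b^0}$ vanishes at $(\bs z^0,\bs\la^0)$, $\on{ev}$ kills $(V^{\otimes N})_{\bs 1}\otimes I_{\bs a^0,\bs b^0}$, so its restriction to $\V_{\bs 1}^{S^L\times S^R}$ descends to a $\mc B_N$-module homomorphism $\overline{\on{ev}}\colon\V_{\bs 1}(\bs a^0,\bs b^0)\to(V^{\otimes N}(\bs z^0,\bs\la^0))_{\bs 1}$. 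Both spaces have dimension $N!$ by the first part, so it remains to prove $\overline{\on{ev}}$ surjective.

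Composing $\on{ev}|_{\V_{\bs 1}^{S^L\times S^R}}$ with $\on{pr}^{-1}$, the description of $\on{pr}^{-1}$ shows this map is
\be
P_N\larr(V^{\otimes N})_{\bs 1}=\bigoplus_{\tau\in S_N}\C\,\ep_\tau,\qquad
p\longmapsto\sum_{\tau\in S_N}p(\bs z^0,\la_{\tau(1)}^0\lc\la_{\tau(N)}^0)\,\ep_\tau\,,
\ee
namely evaluation of multi-symmetric polynomials at the $N!$ points $(\bs z^0,\tau\bs\la^0)$, $\tau\in S_N$. As the $z_i^0$ are distinct and the $\la_i^0$ are distinct, these $N!$ points lie in pairwise distinct orbits of the diagonal $S_N^{z,\la}$-action on $\C^N\times\C^N$, hence give $N!$ pairwise distinct, and so pairwise comaximal, maximal ideals of $P_N$; by the Chinese Remainder Theorem the displayed map is onto. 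Therefore $\overline{\on{ev}}$ is a surjection between $N!$-dimensional spaces, hence an isomorphism of $\mc B_N$-modules. The crux is this surjectivity, resting on the observation that the $N!$ points $(\bs z^0,\tau\bs\la^0)$ occupy distinct $S_N$-orbits; together with the Reynolds-operator identity $P_N\cap I_{\bs a^0,\bs b^0}=\mathfrak m P_N$, these are the only points in the argument that go beyond routine bookkeeping.
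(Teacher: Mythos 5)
The paper offers no argument for this lemma (it is dismissed as ``proved by standard methods''), so there is nothing to compare against; your proposal supplies a complete and correct proof. Both halves check out: the identification $\on{pr}\bigl(\V_{\bs 1}^{S^L\times S^R}\cap (V^{\otimes N})_{\bs 1}\otimes I_{\bs a^0,\bs b^0}\bigr)=P_N\cap I_{\bs a^0,\bs b^0}=\mathfrak m P_N$ via the Reynolds operator, combined with freeness of $P_N$ over $\C[\bs z]^{S_N^z}\otimes\C[\bs\la]^{S_N^\la}$, gives $\dim=N!$; and the evaluation map is visibly a $U(\gln[t])[\bs\la]$-morphism killing $(V^{\otimes N})_{\bs 1}\otimes I_{\bs a^0,\bs b^0}$, with surjectivity correctly reduced to the fact that the $N!$ points $(\bs z^0,\tau\bs\la^0)$ lie in distinct $S_N^{z,\la}$-orbits (here both distinctness hypotheses are used) and hence define pairwise comaximal maximal ideals of $P_N$. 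These are exactly the ``standard methods'' the authors had in mind, and you have isolated the two genuinely non-routine steps accurately.
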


\begin{rem}
The $\mc B_N$-modules $\V_{\bs 1}(\bs a^0,\bs b^0)$ and $(V^{\otimes
N}(\bs z^0,\bs \la^0))_{\bs 1}$ are not always isomorphic. For
example, the former is always cyclic and the latter is not. If all
$\la_i^0$ are distinct then the module $\V_{\bs 1}(\bs a^0,\bs b^0)$
is isomorphic to a subspace in the Weyl module, see
\cite{exp}. It is interesting to understand the module
$\V_{\bs 1}(\bs a^0,\bs b^0)$ and it's precise relation to the
module $(V^{\otimes N}(\bs z^0,\bs \la^0))_{\bs 1}$ for all values
of parameters.
\end{rem}

\subsection{A relation of Calogero-Moser spaces
to the spaces of quasi-exponentials}
Let $(Z,\La)\in C_N$. Let the values of $\psi_{ij}^C$
on $(Z,\La)$ be $\psi_{ij,Z,\La}^{C}\in\C$. Then we
obtain a formal power series with complex coefficients:
\be
\Psi^C_{Z,\La}=1+\sum_{i,j=1}^\infty \psi_{ij,Z,\La}^{C} u^{-i}v^{-j}.
\ee

\begin{thm}[\cite{Wn}]\label{C=Q}
For any $(Z,\La)\in C_N$, there exists a space of quasi-exponentials
$W$ of degree $N$
such that the exponents of $W$ are eigenvalues of $Z$, the
singular points of $W$ are eigenvalues of $\La$ and
$\Psi^C_{Z,\La}=\Psi^{\mc W}_W$.
Moreover, this establishes a bijective correspondence between points
of $C_N$ and the set $Q_N$ of equivalence classes of spaces of
quasi-exponentials of degree $N$.
\qed
\end{thm}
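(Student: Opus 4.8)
The plan is to follow Wilson \cite{Wn}. I would first treat the open dense locus $C_N^\circ=\{(Z,\La)\in C_N : \La\ \text{has simple spectrum}\}$ (open by the condition that the discriminant of $\det(v-\La)$ be nonzero, and dense since $C_N$ is irreducible), establish the correspondence there by an explicit computation, and then extend it to all of $C_N$ by a degeneration argument together with the fact that a point of $C_N$ is recovered from its $\Psi$-function.

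On $C_N^\circ$, conjugate so that $\La=\on{diag}(\la_1^0\lc\la_N^0)$ with the $\la_i^0$ distinct. Then $[Z,\La]$ has vanishing diagonal, so the rank-one matrix $[Z,\La]+1$ has all diagonal entries equal to $1$; after a further diagonal conjugation, which preserves $\La$, it becomes the all-ones matrix, which forces $Z_{ij}=(\la_j^0-\la_i^0)^{-1}$ for $i\neq j$ while $Z_{ii}=h_i$ stays arbitrary. Thus $C_N^\circ$ is the $2N$-dimensional family of $S_N$-orbits of data $(\bs h,\bs\la^0)$ with the $\la_i^0$ distinct, which by Section \ref{quasi sec} is exactly the family of generic spaces of quasi-exponentials $W_{\bs h^0,\bs\la^0}$ of degree $N$. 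The key point is then the identity $\Psi^C_{Z,\La}=\Psi^{\mc W}_W$ for a generic space $W$ of quasi-exponentials of degree $N$; one proves it by expanding the numerator $\det((v-\La)(u-Z)-1)$ of $\Psi^C$ along the rank-one structure of $[Z,\La]+1$, obtaining a closed form, and comparing it with the determinant formula of Section \ref{quasi sec} for $v^N+\sum_i G_{i,W}(u)v^{N-i}$, which involves the Wronskians of the functions $(u-h_i^0)e^{\la_i^0 u}$. From this one reads off that the exponents and singular points of $W$ are as stated, and that $\deg W=N$, so on $C_N^\circ$ we get a bijection onto the generic part of $Q_N$.

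To reach an arbitrary $(Z,\La)\in C_N$, choose generic $(Z_t,\La_t)\to(Z,\La)$ (possible since $C_N^\circ$ is dense). The coefficients $\psi^C_{ij}$ are regular on $C_N$, so $\Psi^C_{Z_t,\La_t}=\Psi^{\mc W}_{W_t}$ converge coefficientwise to $\Psi^C_{Z,\La}$; since the $W_t$ all have degree $N$, their Wronskians (monic of degree $N$) and the coefficients $G_{i,W_t}(u)$ of $\mc D^{\mc W}_{W_t}$ depend in a controlled way on finitely many of these coefficients and hence converge, so $\mc D^{\mc W}_{W_t}$ converges to a monic Fuchsian operator of order $N$ whose kernel $W$ is again a space of quasi-exponentials of degree $N$ (the exponents being the limits of those of the $W_t$), with $\Psi^{\mc W}_W=\lim\Psi^{\mc W}_{W_t}=\Psi^C_{Z,\La}$. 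This proves the first assertion. For the bijection $C_N\to Q_N$: injectivity is immediate, since the $\psi^C_{ij}$ generate $\mc O^C_N$ by Lemma \ref{Pasha} and $\mc O^C_N$ separates the points of the affine variety $C_N$, so $(Z,\La)$ is determined by its class in $Q_N$; surjectivity follows by running the limiting argument in reverse, taking the canonical representative $W$ of a class in $Q_N$ (Lemma \ref{equiv lem}), writing it as a limit of generic spaces $W_{\bs h(t),\bs\la(t)}$ of degree $N$, and observing that the traces $\tr(\La_t^a Z_t^b)$ — polynomials in the convergent $\psi^C_{ij}$, by the proof of Lemma \ref{Pasha} — converge, so $(Z_t,\La_t)$ has a limit $(Z,\La)\in C_N$ with $\Psi^C_{Z,\La}=\Psi^{\mc W}_W$.

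The main obstacle is the analytic input that every canonical space of quasi-exponentials of degree $N$ arises as a limit of generic ones — equivalently, the irreducibility of $C_N$ with dense generic stratum, and the control of the behavior of the spaces $W_t$ when the exponents collide. This is precisely the study of collisions of Calogero--Moser particles carried out in \cite{Wn}, which also supplies a direct reconstruction of $(Z,\La)$ from $W$ bypassing the limits; I would take this for granted.
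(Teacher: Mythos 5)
The paper offers no proof of this statement: it is imported wholesale from Wilson \cite{Wn}, so the only thing to compare your argument against is that citation, and your sketch is indeed a faithful outline of Wilson's strategy (generic locus first, then collisions). On the generic locus your normalization is right --- with $\La$ diagonal, $[Z,\La]$ has zero diagonal, the rank-one matrix $[Z,\La]+1$ has unit diagonal and can be conjugated to the all-ones matrix by a diagonal matrix commuting with $\La$, forcing $Z_{ij}=(\la_j^0-\la_i^0)^{-1}$ for $i\neq j$ --- but the identification of the remaining parameters is asserted too glibly: the diagonal entries $Z_{ii}$ are \emph{not} the roots $h_i^0$ of the polynomial parts of $W_{\bs h^0,\bs\la^0}$. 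Already for $N=2$ one computes $\on{Wr}_{W_{\bs h^0,\bs\la^0}}=(u-h_1^0)(u-h_2^0)+(h_2^0-h_1^0)/(\la_2^0-\la_1^0)$, whereas $\det(u-Z)=(u-Z_{11})(u-Z_{22})+(\la_2^0-\la_1^0)^{-2}$; the two $2N$-dimensional families do correspond, but through a nontrivial change of variables that your ``expanding the numerator and comparing'' step must actually produce. Note also that your computation gives exponents equal to eigenvalues of $\La$ and singular points equal to eigenvalues of $Z$, which is forced by matching the $v$-denominator $\prod_i(v-\la_i^0)$ of $\Psi^{\mc W}_W$ with $\det(v-\La)$ and the $u$-denominator $\on{Wr}_W(u)$ with $\det(u-Z)$; the theorem as printed attributes them the other way around, so your version is the consistent one and the printed statement appears to have $Z$ and $\La$ interchanged.

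The genuine gap is everything off the generic locus, and you have correctly localized it but not filled it. Three separate nontrivial inputs are being taken on faith from \cite{Wn}: (i) irreducibility (equivalently connectedness) of $C_N$, needed for density of the generic stratum; (ii) the claim that the limit of the operators $\mc D^{\mc W}_{W_t}$ along a degenerating family still has an $N$-dimensional space of quasi-exponential solutions --- this is not a soft compactness statement, since when exponents collide the obvious bases blow up and one must exhibit a convergent one, which is precisely the collision analysis occupying most of \cite{Wn}; and (iii) surjectivity, i.e.\ that every canonical space of degree $N$ (Lemma \ref{equiv lem}) is such a limit with the traces $\tr(\La_t^aZ_t^b)$ staying bounded. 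Your injectivity argument via Lemma \ref{Pasha} and separation of points by regular functions is fine. Since the paper itself supplies only the citation, your proposal is an accurate map of where the difficulty lives, but it is not a self-contained proof and does not go beyond what the paper already delegates to \cite{Wn}.
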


We call $(Z,\La)\in C_N$ a {\it generic point} if $Z$ has a simple
spectrum. The set of generic points is dense in $C_N$, see \cite{Wn}.
The generic points correspond to equivalence classes of
quasi-exponentials of degree $N$ which contain a generic space of
quasi-exponentials.

\subsection{Proof of Theorem \ref{C=B}}
The proof is similar to the proof of Theorem 5.3 in \cite{transversal}.

First we show that the map $\tau_{OB}$ is well defined. Let a
polynomial $R(m_{ij})$ in generators $m_{ij}$ be equal to zero in $\mc
O^C_N$. We need to prove that $R(\bar b_{ij})$ is equal to zero in the
algebra $\bar{\mc B}_N$. Consider $R(\bar b_{ij})$ as a polynomial in
$z_1\lc z_N$ and $\la_1\lc\la_N$ with values in
$\on{End}\left((V^{\otimes N})_{\bs 1}\right)$. Let $\Theta$ be as in
Lemma \ref{generic lem}, and $(\bs z^0,\bs \la^0)\in\Theta$. Then by
part (i) of Lemma \ref{generic lem}, the value of the polynomial
$R(\bar b_{ij})$ at $z_1=z_1^0\lc z_N=z_N^0$ and
$\la_1=\la_1^0\lc\la_N=\la_N^0$ equals zero. Hence, the polynomial
$R(\bar b_{ij})$ equals zero identically.

Next we show that the map $\tau_{OB}$ is injective. Let a polynomial
$R(m_{ij})$ in generators $m_{ij}$ be a nonzero element of $\mc
O^C_N$. Then the value of $R(m_{ij})$ at a generic point $(Z,\La)\in
C_N$ is not equal to zero. Then by part (ii) of Lemma \ref{generic
lem}, the polynomial $R(\bar b_{ij})$ is not identically equal to
zero.

Finally, the map $\tau_{OB}$ is surjective since the elements $\bar
b_{ij}$ generate the algebra $\bar {\mc B}_N\,$.
\qed

\begin{rem}
Let $L_{\bs \nu^{i}}$, $i=1\lc k$, be irreducible finite-dimensional
$\gln$-modules corresponding to partitions $\bs\nu^{i}$.
The Bethe algebra $\mc B_N$ acts on the space $(\otimes_{i=1}^k
L_{\bs \nu^{i} }) \otimes \C[z_1\lc z_k,\la_1\lc\la_N]$. Let
$\tilde B_{ij}$ be the linear operators corresponding to the
operators $B_{ij}$ and let $\tilde {\mc B}_N$ be the algebra
generated by $\tilde B_{ij}$.

Let
\be
\Psi^{\tilde{\mc B}}\,=\,\Bigl(v^N+\sum_{i=1}^N
\sum_{j=0}^\infty \tilde
B_{ij}u^{-j}v^{N-i}\Bigr)\,\prod_{i=1}^N(v-\la_i)^{-1}.
\ee
Set
$n=\sum_{i=1}^k |\bs \nu^{i}|$. Then we have a map $\mc O^C_n\to
\tilde {\mc B}_N$, which sends the coefficients of $\Psi^C$ to the
corresponding coefficients of $\Psi^{\tilde{\mc B}}$. Similarly to
Theorem \ref{C=B}, using the results of \cite{transversal}, one can
show that this map is a well defined homomorphism of algebras.
However, it is neither injective nor surjective in general,
see Section 5.2 in \cite{Gaudin Ham}.
\end{rem}

\section{Corollaries of Theorems \ref{Z=B}, \ref{C=B}}\label{sec cor 2}
\subsection{Regular functions on the Calogero-Moser space and the
center of the Cherednik algebra}
Define an algebra homomorphism
\be
\tau_{OZ}: \mc O^C_N \to \mc Z_N, \qquad m_{ij}\mapsto c_{ij}.
\ee
\begin{cor}\label{O=C}
The map $\tau_{CZ}$ is a well-defined algebra isomorphism
and $\tau_{OZ}=\tau_{BZ}\circ\tau_{OB}$.
\end{cor}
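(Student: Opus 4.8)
The plan is to obtain this corollary as a purely formal consequence of Theorems \ref{Z=B} and \ref{C=B}. First I would recall the two isomorphisms already in hand: Theorem \ref{C=B} provides the algebra isomorphism $\tau_{OB}\colon\mc O^C_N\to\bar{\mc B}_N$ sending each generator $m_{ij}$ to $\bar b_{ij}$, and the corollary following Theorem \ref{Z=B} provides the algebra isomorphism $\tau_{BZ}\colon\bar{\mc B}_N\to\mc Z_N$ sending $\bar b_{ij}$ to $c_{ij}$. Composing, the map $\tau_{BZ}\circ\tau_{OB}\colon\mc O^C_N\to\mc Z_N$ is an algebra isomorphism, being a composition of algebra isomorphisms, and on generators it sends $m_{ij}\mapsto\bar b_{ij}\mapsto c_{ij}$.

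The second step is to identify this composition with $\tau_{OZ}$. By the lemma on the Calogero--Moser universal polynomial, the elements $m_{ij}$, $i,j=0,\dots,N$, generate $\mc O^C_N$, so any algebra homomorphism out of $\mc O^C_N$ is determined by its values on the $m_{ij}$. Since $\tau_{BZ}\circ\tau_{OB}$ and the prescribed assignment for $\tau_{OZ}$ agree on these generators, the map $\tau_{OZ}$ is well defined --- that is, the defining relations among the $m_{ij}$ are respected --- and $\tau_{OZ}=\tau_{BZ}\circ\tau_{OB}$; in particular $\tau_{OZ}$ is an algebra isomorphism.

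There is essentially no obstacle here: the only point that needs care, namely that $\tau_{OZ}$ is a well-defined algebra map, is handled automatically by routing through $\bar{\mc B}_N$, where well-definedness was already established in the proof of Theorem \ref{C=B}. As a byproduct this recovers the theorem of \cite{EG} identifying the center of the rational Cherednik algebra of type $A$ with the algebra of regular functions on the Calogero--Moser space, now with the explicit matching $m_{ij}\leftrightarrow c_{ij}$ of generators.
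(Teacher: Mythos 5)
Your proposal is correct and follows exactly the same route as the paper: compose the isomorphisms $\tau_{OB}$ and $\tau_{BZ}$ from Theorems \ref{C=B} and \ref{Z=B}, check that on the generators $m_{ij}$ the composition sends $m_{ij}\mapsto\bar b_{ij}\mapsto c_{ij}$, and conclude that $\tau_{OZ}$ is a well-defined isomorphism equal to that composition. The paper's proof is just a terser statement of the same argument.
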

\begin{proof}
By Theorems~\ref{C=B} and~\ref{Z=B}, the maps $\tau_{BZ}$ and $\tau_{OB}$
are algebra isomorphisms such that $\tau_{OB}(m_{ij})=\bar b_{ij}$ and
$\tau_{BZ}(\bar b_{ij})=c_{ij}$. The claim follows.
\end{proof}

The fact that algebras $\mc O^C_N$ and $\mc Z_N$ are isomorphic
is proved by a different method in \cite{EG}.

\subsection{Bijections}\label{sec bij}
Recall that we have the following sets.

\begin{itemize}

\item The Calogero-Moser space $C_N$.

\item The set $Q_N$ of equivalence classes of spaces of
quasi-exponentials of degree $N$.

\item The set $R_N$ of isomorphisms classes of the irreducible
representations of the Cherednik algebra $U_N$.

\end{itemize}
There are well known bijections between these three sets. The
bijection between $C_N$ and $Q_N$ is contained in \cite{Wn}, see also
Theorem \ref{C=Q}. The bijection between $C_N$ and $R_N$ is described
in \cite{EG}. We add one more set to this list:

\begin{itemize}
\item The set of eigenvectors of the Bethe algebra $\mc B_N$
up to a multiplication by a non-zero number in
\be
\bs\V_{\bs 1}=\bigoplus_{\bs
(\bs a^0,\bs b^0)\in\C^N\times \C^N}\mc V_{\bs 1}(\bs a^0,\bs b^0).
\ee
\end{itemize}
We denote this set by $E_N$.

We describe the bijections of $E_N$ to the first three sets. Let
$v\in\mc V_{\bs 1}(\bs a^0,\bs b^0)\subset \bs\V_{\bs 1}$ be an
eigenvector of the Bethe algebra $\mc B_N$. Let $B_{ij,v}\in\C$ be the
corresponding eigenvalues: $B_{ij}v=B_{ij,v}v$.

Note that the action of the algebra $\mc B_N$ in $\mc V_{\bs 1}(\bs
a^0,\bs b^0)$ factors through the action of the algebra $\bar{\mc
B}_N$. In particular, by Theorem \ref{C=B}, the algebra $\mc O^C_N$
acts on $\mc V_{\bs 1}(\bs a^0,\bs b^0)$. Moreover, an eigenvector
$v$ of the Bethe algebra $\mc B_N$ defines an algebra homomorphism $\chi_v: \mc
O^C_N\to\C$.

\begin{cor}
If $v,w\in \bs\V_{\bs 1}$ are eigenvectors of the Bethe algebra $\mc
B_N$ and $B_{ij,w}=B_{ij,v}$ for all $i,j$ then $w=cv$ for some
$c\in\C$.
\end{cor}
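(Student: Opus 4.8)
The plan is to realize $\bs\V_{\bs 1}$, as a module over $\bar{\mc B}_N\cong\mc O^C_N$, as a direct sum of regular representations of Gorenstein Artinian quotients of $\mc O^C_N$, in each of which every character is realized on a one-dimensional eigenspace. First I would reduce to a single summand. Since the $\mc B_N$-action on $\bs\V_{\bs 1}$ preserves each $\mc V_{\bs 1}(\bs a^0,\bs b^0)$ and factors there through $\bar{\mc B}_N$, both $v$ and $w$ are eigenvectors of $\bar{\mc B}_N$ inside such summands. As $\bar{\mc B}_N$ is generated by the operators $\bar B_{ij}$, the numbers $B_{ij,v}$ determine the algebra homomorphism $\chi_v\colon\bar{\mc B}_N\to\C$ attached to $v$, and similarly for $w$; the hypothesis gives $\chi_v=\chi_w=:\chi$. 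Reading off the coefficients of $u^N$ and of $v^N$ in $\mc P^{\bar{\mc B}}$ via Lemma \ref{comp bethe}, one sees that $\bar b_{i0}$ and $\bar b_{0j}$ are the operators of multiplication by $(-1)^i\si_i(\bs z)$ and $(-1)^j\si_j(\bs\la)$; these lie in $\bar{\mc B}_N$, so $\chi(\bar b_{i0})$ and $\chi(\bar b_{0j})$ are determined, and on $\mc V_{\bs 1}(\bs a^0,\bs b^0)$ they equal $(-1)^ia_i^0$ and $(-1)^jb_j^0$. Hence $v$ and $w$ lie in one and the same summand $\mc V_{\bs 1}(\bs a^0,\bs b^0)$.

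Next I would identify this summand, as a module over $\bar{\mc B}_N\cong\mc O^C_N$, with the regular representation of the quotient algebra $A=\mc O^C_N/J$, where $J$ is the ideal generated by the $2N$ elements $m_{i0}-(-1)^ia_i^0$ and $m_{0j}-(-1)^jb_j^0$, $i,j=1\lc N$. By Corollary \ref{cor C=B} the module $\V_{\bs 1}^{S^L\times S^R}$ is the regular representation of $\mc O^C_N$, and a routine computation — using the Reynolds operator of $S_N^{z,\la}$ to rewrite the generators of $I_{\bs a^0,\bs b^0}$, as in the proof of $\dim\mc V_{\bs 1}(\bs a^0,\bs b^0)=N!$ — identifies the submodule $\V_{\bs 1}^{S^L\times S^R}\cap\bigl((V^{\otimes N})_{\bs 1}\otimes I_{\bs a^0,\bs b^0}\bigr)$ with $J\subset\mc O^C_N$. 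Passing to the quotient gives $\mc V_{\bs 1}(\bs a^0,\bs b^0)\cong A$ as an $A$-module. Since $\dim_\C\mc V_{\bs 1}(\bs a^0,\bs b^0)=N!$, the algebra $A$ is Artinian.

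Finally I would invoke smoothness of $C_N$. Because $C_N$ is a smooth affine variety of dimension $2N$, the ring $\mc O^C_N$ is Cohen--Macaulay and its localization at each maximal ideal $\mathfrak m\supset J$ is a regular local ring of dimension $2N$; there the images of the $2N$ generators of $J$ form a system of parameters, hence a regular sequence, so $(\mc O^C_N/J)_{\mathfrak m}$ is a complete intersection and in particular Gorenstein. Thus $A$ is Gorenstein Artinian; writing $A=\prod_k A_k$ as a product of local Artinian rings, each $A_k$ has a one-dimensional socle. Consequently, in the regular representation of $A$ the common eigenspace $\{a\in A\mid (b-\chi(b))a=0\text{ for all }b\in A\}=\on{Ann}_A(\ker\chi)$ is one-dimensional, which forces $w=cv$.

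The step I expect to be the main obstacle is the identification in the second paragraph: matching the submodule in the definition of $\mc V_{\bs 1}(\bs a^0,\bs b^0)$ with the explicit complete-intersection ideal $J$ under the isomorphism of Theorem \ref{C=B}, so as to get the summand exactly as the regular representation of $\mc O^C_N/J$ rather than merely as a cyclic module. Once that is in place, the remaining input is standard commutative algebra (Cohen--Macaulay system of parameters $\Rightarrow$ complete intersection $\Rightarrow$ Gorenstein $\Rightarrow$ one-dimensional socle).
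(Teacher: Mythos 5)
Your proof is correct and follows essentially the same route as the paper's: both reduce the claim to Corollary \ref{cor C=B} (the space is the regular representation of $\mc O^C_N$) together with the self-duality of the relevant Artinian quotients, which forces the joint eigenspace $\on{Ann}(\on{Ker}\chi_v)$ to be one-dimensional. The paper simply asserts that the regular and coregular representations of $\mc O^C_N$ are isomorphic, whereas you derive this Frobenius/Gorenstein property from smoothness of $C_N$ via the complete-intersection argument and also spell out the reduction to a single summand $\mc V_{\bs 1}(\bs a^0,\bs b^0)$ — details the paper leaves implicit.
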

\begin{proof}
By Corollary \ref{cor C=B}, the space $V_{\bs 1}^{S^R\times S^L}$ is
a regular representation of the algebra $\mc O_C^N$. The regular and coregular
representations of the algebra $\mc O_C^N$ are isomorphic. Therefore 
the kernel of the ideal $\on{Ker}\chi_v$ is one-dimensional.
\end{proof}

Let $\nu_{C}: E_N\to C_N$ be the map which sends $v$ to the point in
$C_N$ corresponding to the maximal ideal $\on{Ker} \chi_v\subset \mc
O^C_N$.

\begin{cor}\label{C cor}
The map $\nu_C$ is a bijection.
\end{cor}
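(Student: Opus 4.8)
The plan is to prove that $\nu_C\colon E_N\to C_N$ is a bijection by establishing surjectivity and injectivity separately, using the isomorphisms already proved.

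First, for surjectivity: given a point $(Z,\La)\in C_N$, it corresponds to a maximal ideal $\mathfrak m\subset\mc O^C_N$, namely the kernel of the evaluation character $\chi\colon\mc O^C_N\to\C$, $\chi(\psi^C_{ij})=\psi^C_{ij,Z,\La}$. Via the isomorphism $\tau_{OB}\colon\mc O^C_N\to\bar{\mc B}_N$ of Theorem~\ref{C=B}, this gives a character of $\bar{\mc B}_N$. I must produce a point $(\bs a^0,\bs b^0)\in\C^N\times\C^N$ and an eigenvector $v\in\mc V_{\bs 1}(\bs a^0,\bs b^0)$ realizing this character. The natural choice is to take $\bs a^0,\bs b^0$ to be the elementary symmetric functions of the eigenvalues of $Z$ and $\La$ respectively (these are well-defined on $C_N$ since they are $GL_N$-conjugation invariant). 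Then by Corollary~\ref{cor C=B}, $\mc V_{\bs 1}(\bs a^0,\bs b^0)$ is (a quotient compatible with) the regular representation of $\mc O^C_N$, restricted/localized appropriately; more precisely one should argue that $\mc V_{\bs 1}(\bs a^0,\bs b^0)$ carries the action of the finite-dimensional quotient of $\mc O^C_N$ by the ideal generated by the $\si_i$-relations, and that $\mathfrak m$ lies over $(\bs a^0,\bs b^0)$. Since $\mc V_{\bs 1}(\bs a^0,\bs b^0)$ is a regular representation of that quotient algebra, every character of it is realized by an eigenvector. This shows $\nu_C$ is surjective.

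For injectivity: suppose $v,w\in\bs\V_{\bs 1}$ are eigenvectors with $\nu_C(v)=\nu_C(w)$, i.e. $\on{Ker}\chi_v=\on{Ker}\chi_w$ as ideals of $\mc O^C_N$. Since these are maximal ideals and $\mc O^C_N/\on{Ker}\chi_v\cong\C$, this forces $\chi_v=\chi_w$, so $B_{ij,v}=B_{ij,w}$ for all $i,j$ (using that $\tau_{OB}$ sends $m_{ij}$ to $\bar b_{ij}$, whose eigenvalues on $v$ are the $B_{ij,v}$). By the Corollary just preceding (the one asserting eigenvectors with equal Bethe eigenvalues are proportional), $w=cv$. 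Hence $\nu_C$ is injective. One must also check that $v$ and $w$ lie in the same summand $\mc V_{\bs 1}(\bs a^0,\bs b^0)$ — but this is forced because the central elements $\si_i(\bs z),\si_i(\bs\la)$ act by the scalars $a_i^0,b_i^0$ determined by $\chi_v$ (these symmetric functions being among the regular functions on $C_N$, or rather their images), so equal characters land in the same block.

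The main obstacle I expect is the surjectivity step, specifically matching up the ``global'' algebra $\mc O^C_N$ (a $2N$-dimensional variety) with the family of finite-dimensional modules $\mc V_{\bs 1}(\bs a^0,\bs b^0)$: one needs that as $(\bs a^0,\bs b^0)$ ranges over $\C^N\times\C^N$, the union of spectra of $\bar{\mc B}_N$ acting on $\mc V_{\bs 1}(\bs a^0,\bs b^0)$ covers all of $\on{Specm}\mc O^C_N = C_N$, with no point missed and each realized by a genuine eigenvector rather than merely a generalized eigenvector. This is where Corollary~\ref{cor C=B} (the space $\V_{\bs 1}^{S^L\times S^R}$ is the regular representation of $\mc O^C_N$) does the real work: the regular representation of a reduced finitely-generated $\C$-algebra has every maximal ideal as an honest eigenvalue with a one-dimensional eigenspace, and $C_N$ being smooth (hence reduced) is exactly what guarantees this. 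So the proof is essentially a formal consequence of Corollary~\ref{cor C=B} together with the preceding corollary on proportionality of eigenvectors; the content is entirely in those, and here one just needs to phrase the dictionary between ideals/characters and points carefully.
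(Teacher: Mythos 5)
Your proposal is correct and takes essentially the same route as the paper, whose entire proof is the one-line statement that the corollary follows from Theorem \ref{C=B} and Corollary \ref{cor C=B}; you have simply unpacked that reduction, with injectivity resting on the immediately preceding proportionality corollary exactly as intended. (One small quibble in your commentary: what guarantees that each maximal ideal is realized by a one-dimensional honest eigenspace in the finite-dimensional blocks is the self-duality of the regular representation invoked in the preceding corollary --- i.e.\ the Gorenstein/Frobenius property, which follows from smoothness of $C_N$ --- not reducedness.)
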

\begin{proof}
The corollary follows from Theorem \ref{C=B} and Corollary \ref{cor C=B}.
\end{proof}

Let $\nu_{Q}: E_N\to Q_N$ be the map which sends $v$ to the kernel
$W_v$ of the differential operator
\be
\mc D_v^{\mc B}=
\partial^N+\sum_{i=1}^N\sum_{j=0}^\infty B_{ij,v}u^{-j}v^{N-i}.
\ee

\begin{cor}\label{Q cor}
For every eigenvector $v\in\bs \V_{\bs 1}$ of the algebra $\mc B_N$,
the $W_v$ is a canonical space of quasi-exponential of degree $N$.
The map $\nu_{Q}$ is a bijection.
\end{cor}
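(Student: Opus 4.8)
The plan is to combine the three bijections already available in the excerpt---Theorem \ref{C=Q} ($C_N \leftrightarrow Q_N$), Corollary \ref{C cor} ($E_N \leftrightarrow C_N$), and Theorem \ref{C=B} (the algebra isomorphism $\mc O^C_N \cong \bar{\mc B}_N$)---and to check that the coefficients match under these identifications, i.e.\ that the series $\Psi^{\mc B}_v$ attached to an eigenvector $v$ coincides with the series $\Psi^C_{Z,\La}$ attached to the corresponding point of $C_N$, and hence with the $\Psi$-function $\Psi^{\mc W}_{W_v}$ of the corresponding equivalence class in $Q_N$. Granting that, $\nu_Q = \tau \circ \nu_C$ for the bijection $\tau: C_N \to Q_N$ of Theorem \ref{C=Q}, so $\nu_Q$ is a composition of bijections, hence a bijection.

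First I would make precise the relation between the differential operator $\mc D^{\mc B}_v$ and the $\Psi$-series. For an eigenvector $v \in \mc V_{\bs 1}(\bs a^0, \bs b^0)$, write $\mc D^{\mc B}_v = \partial^N + \sum_i \sum_j B_{ij,v} u^{-j} \partial^{N-i}$; its ``symbol'' series is $v^N + \sum B_{ij,v} u^{-j} v^{N-i}$, and dividing by $\prod_{i=1}^N (v - \la_i^0) = v^N + \sum (-1)^i b_i^0 v^{N-i}$ gives a power series $\Psi^{\mc B}_v$ in $u^{-1}, v^{-1}$ with constant term $1$. The coefficients of $\Psi^{\mc B}_v$ are precisely the eigenvalues of the generators $\bar b_{ij}$ divided out by the denominator---that is, they are obtained by evaluating the images under $\tau_{OB}^{-1}$ of $m_{ij}$, reorganized into $\psi^C_{ij}$, at the point $\nu_C(v) \in C_N$. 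Concretely, $\tau_{OB}$ sends $m_{ij}$ to $\bar b_{ij}$ and the coefficients $\psi^C_{ij}$ of $\Psi^C = \det(u-Z)^{-1}\det(v-\La)^{-1}\mc P^C$ are built from the $m_{ij}$ exactly as the coefficients of $\Psi^{\mc B}_v$ are built from the $B_{ij,v}$; hence $\Psi^{\mc B}_v = \Psi^C_{Z,\La}$ where $(Z,\La) = \nu_C(v)$.

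Next I would invoke Theorem \ref{C=Q}: for $(Z,\La) = \nu_C(v)$ there is a unique equivalence class of spaces of quasi-exponentials of degree $N$, containing a canonical representative $W$, with $\Psi^{\mc W}_W = \Psi^C_{Z,\La} = \Psi^{\mc B}_v$. On the other hand, $W_v = \ker \mc D^{\mc B}_v$ is a space of quasi-exponentials whose $\Psi$-function is, by construction of the monic scalar operator $\mc D^{\mc W}_{W_v}$ from its kernel (the determinant formula in Section \ref{quasi sec}), equal to $\Psi^{\mc B}_v$: indeed $\mc D^{\mc W}_{W_v} = \mc D^{\mc B}_v$ because both are monic order-$N$ scalar operators with kernel $W_v$, so their symbol series agree and dividing by the same denominator $\prod(v-\la_i^0)$ gives $\Psi^{\mc W}_{W_v} = \Psi^{\mc B}_v$. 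Therefore $W_v$ lies in the same equivalence class as $W$, i.e.\ $\nu_Q(v)$ is exactly the class $\tau(\nu_C(v)) \in Q_N$. Since the exponents of $W$ are the eigenvalues of $Z$ and the singular points the eigenvalues of $\La$, with multiplicities matching (this is the canonical property built into $W$), I would conclude that $W_v$ itself is canonical: one checks $\dim W_v(\la^0) = \deg W_v(\la^0)$ for every $\la^0$, using Lemma \ref{equiv lem} (the canonical representative in each class is unique) together with the fact that $W_v$ has dimension $N = \deg W_v$, which already forces $W_v$ to be the canonical member. Hence $W_v$ is canonical and $\nu_Q = \tau \circ \nu_C$ is a bijection.

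The main obstacle I anticipate is the bookkeeping in the first step: showing that the series $\Psi^{\mc B}_v$ really equals $\Psi^C$ evaluated at $\nu_C(v)$, rather than merely being abstractly related. This requires being careful that $\tau_{OB}$ matches the $m_{ij}$ with the $\bar b_{ij}$ with the \emph{same} indexing conventions (powers of $u$ and $v$) that appear in $\mc P^C = \det((v-\La)(u-Z)-1)$ and in $\mc P^{\bar{\mc B}} = w(u,\bs z)(v^N + \sum \bar B_{ij} u^{-j} v^{N-i})$, so that dividing both numerators by the correct denominators ($\det(u-Z)^{-1}\det(v-\La)^{-1}$ on one side, $\prod(v-\la_i)^{-1}$ after passing to eigenvalues on the other, bearing in mind $w(u,\bs z)$ becomes $\det(u-Z) = \prod(u-z_i^0)$ on an eigenvector) yields literally the same power series. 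Once the dictionary of Theorem \ref{C=B} is unwound at this level of explicitness, the rest is a formal chase through Lemma \ref{equiv lem} and Theorem \ref{C=Q}.
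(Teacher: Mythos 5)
Your overall architecture for the bijectivity claim is the same as the paper's: identify $\nu_Q$ with the composition of $\nu_C$ (Corollary \ref{C cor}) and the Wilson bijection $C_N\to Q_N$ (Theorem \ref{C=Q}) by matching the $\Psi$-series $\Psi^{\mc B}_v=\Psi^C_{Z,\La}=\Psi^{\mc W}_W$, and the indexing bookkeeping you flag is indeed the only content there. The difference, and the genuine gap, is in how you establish that $W_v$ is \emph{canonical}. Your key step is: ``$W_v$ has dimension $N=\deg W_v$, which already forces $W_v$ to be the canonical member.'' This implication is false in general: canonicity requires $\dim W(\la^0)=\deg W(\la^0)$ for \emph{every} exponent $\la^0$ separately, and equality of the totals does not give the component-wise equalities. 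For instance $W=\langle 1,\,u,\,q(u)e^{u}\rangle$ with $\deg q=3$ has $\on{Wr}_W=q''+2q'+q$, so $\dim W=\deg W=3$, yet $\dim W(0)=2\neq 0=\deg W(0)$. Moreover, even granting the class of $W_v$ is the right one, concluding that $\ker\mc D^{\mc B}_v$ \emph{equals} the canonical representative requires knowing that the denominator $\prod_i(v-\la_i^0)$ you divide by coincides with the product over the exponents, with multiplicity, of the canonical representative; Theorem \ref{C=Q} as stated does not hand you that multiplicity statement in degenerate situations, which is precisely where the difficulty lies.

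The paper closes this gap differently: for $(\bs a^0,\bs b^0)$ generic, Lemma \ref{distinct} (via the Bethe ansatz, Lemma \ref{generic lem}) shows $W_v$ is a \emph{generic} space of quasi-exponentials, hence canonical and minimal; canonicity for arbitrary eigenvectors then follows by a continuity/degeneration argument, since the coefficients of $\mc D^{\mc B}_v$ depend algebraically on the point of $C_N$ and being canonical is preserved in the limit. If you want to avoid the degeneration argument, you must instead prove directly that the exponents (with multiplicity) of the canonical representative attached to $(Z,\La)$ are the roots of $v^N+\sum_i(-1)^ib_i^0v^{N-i}$, at which point $\mc D^{\mc B}_v=\mc D^{\mc W}_W$ for the canonical $W$ and canonicity of $W_v=W$ is automatic --- making your dimension count unnecessary rather than a substitute for it.
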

\begin{proof}
The space $W_v$ is a space of quasi-exponentials of degree $N$ by
Corollary \ref{C cor} and Theorem \ref{C=Q}. This space is generic
for generic values of $\bs a^0,\bs b^0$, see Lemma \ref{distinct}.
It follows by continuity that $W_v$ is a canonical space of
quasi-exponentials of degree $N$. Therefore the map $\nu_Q$ is well
defined. The map $\nu_Q$ a bijection by Corollary \ref{C cor} and
Theorem \ref{C=Q}.
\end{proof}

Let
\be
{\tilde \V}_{\bs 1}(\bs a^0,\bs b^0)=\V_{\bs 1}^{S^R}/
(\V_{\bs 1}^{S^R}\bigcap
(V^{\otimes N})_{\bs 1}\otimes I_{\bs a^0,\bs b^0}).
\ee
and
\be
{\tilde {\bs\V}}_{\bs 1}=\bigoplus_{\bs
(\bs a^0,\bs b^0)\in\C^N\times \C^N}{\tilde {\mc V}}_{\bs 1}
(\bs a^0,\bs b^0).
\ee
Clearly, we have an inclusion
${\bs \V}_{\bs 1} \subset\tilde{\bs \V}_{\bs 1}$.

The space ${\tilde{\bs \V}}_{\bs 1}(\bs a^0,\bs
b^0)$ is the left $H_N$-module.
In particular, an eigenvector of the Bethe algebra
$v\in{\bs \V}_{\bs 1} \subset\tilde{\bs \V}_{\bs 1}$ defines an
algebra homomorphism $\chi_v: \mc Z_N\to\C$.

Let $\nu_{R}: E_N\to R_N$ be the map which sends $v$ to the
$H_N$-submodule $M_v$ of $\tilde{\bs \V}_{\bs 1}(\bs a^0,\bs b^0)$ generated by
$v$.

\begin{cor}\label{R cor}
For every eigenvector $v\in\bs \V_{\bs 1}$ of algebra $\mc B_N$,
$M_v$ is an irreducible representation corresponding to the central
character $\chi_v:\mc Z_N\to\C$. The map $\nu_R$ is a bijection.
\end{cor}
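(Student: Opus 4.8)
The plan is to reduce the statement about $\nu_R$ to the already-established bijections $\nu_C$ and $\nu_Q$, together with the structure theory of $H_N$-modules from Theorem \ref{irreps}. First I would observe that an eigenvector $v\in\mc V_{\bs 1}(\bs a^0,\bs b^0)$ of $\mc B_N$ defines a central character $\chi_v\colon\mc Z_N\to\C$ via the isomorphism $\tau_{BZ}$ of Corollary to Theorem \ref{Z=B} (or directly through the action of $\mc Z_N$ on $\tilde{\bs\V}_{\bs 1}(\bs a^0,\bs b^0)$ discussed just above). By Theorem \ref{irreps}, $\chi_v$ determines a unique isomorphism class of irreducible $H_N$-module, namely $H_Ne\otimes_{\mc Z_N}\chi_v$, and every irreducible $H_N$-module arises this way; so the assignment $\chi_v\mapsto H_Ne\otimes_{\mc Z_N}\chi_v$ is a bijection from central characters to $R_N$. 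Thus it suffices to show (a) that $M_v$, the $H_N$-submodule of $\tilde{\bs\V}_{\bs 1}(\bs a^0,\bs b^0)$ generated by $v$, is irreducible with central character $\chi_v$, and (b) that the map $E_N\to\{\text{central characters}\}$, $v\mapsto\chi_v$, is a bijection.

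For (a), the key point is that on $M_v$ the central subalgebra $\mc Z_N$ acts by the scalar $\chi_v$ (since $v$ is an eigenvector and $\mc Z_N$ commutes with everything, every element of $M_v$ is again a $\mc Z_N$-eigenvector with the same character). Hence $M_v$ is a quotient of $H_N\otimes_{\mc Z_N}\chi_v$; comparing with the standard module $H_Ne\otimes_{\mc Z_N}\chi_v$, I would argue that $v$ can be taken (after applying the symmetrizer $e$, using that $\V_{\bs 1}^{S^L\times S^R}$ corresponds to $U_Ne$, cf. Lemma \ref{He,eH,EHE}) to be a cyclic vector lying in the image of $e$, so $M_v$ is a quotient of $H_Ne\otimes_{\mc Z_N}\chi_v$; since the latter is already irreducible by Theorem \ref{irreps} and $M_v\ne 0$, we get $M_v\cong H_Ne\otimes_{\mc Z_N}\chi_v$. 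One must also check $M_v$ does not depend on the choice of scalar multiple of $v$, which is immediate.

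For (b), injectivity of $v\mapsto\chi_v$ up to scalar is exactly the Corollary preceding Corollary \ref{C cor} (the kernel of $\on{Ker}\chi_v$ in the regular representation $\mc V_{\bs 1}^{S^L\times S^R}$ of $\mc O_N^C\cong\mc Z_N$ is one-dimensional), so $v$ is determined up to proportionality by $\chi_v$. Surjectivity: by Corollary \ref{C cor} the map $\nu_C\colon E_N\to C_N$ is a bijection, and by Theorem \ref{C=B} together with $\tau_{OZ}=\tau_{BZ}\circ\tau_{OB}$, the central characters of $H_N$ are exactly the points of $C_N$ via $\tau_{OZ}$; composing, every central character is $\chi_v$ for some $v\in E_N$. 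Therefore $v\mapsto\chi_v\mapsto M_v$ is a composition of bijections, proving $\nu_R$ is a bijection. The main obstacle I anticipate is the bookkeeping in step (a): one must pass carefully between the three incarnations $\V_{\bs 1}$, $\V_{\bs 1}^{S^R}$ ($\leftrightarrow H_Ne$) and $\V_{\bs 1}^{S^L\times S^R}$ ($\leftrightarrow U_Ne$) and verify that the cyclic $H_N$-module generated by an eigenvector inside $\tilde{\bs\V}_{\bs 1}(\bs a^0,\bs b^0)$ is genuinely the full standard module $H_Ne\otimes_{\mc Z_N}\chi_v$ rather than a proper submodule — this uses that $e v\ne 0$, which in turn follows because $v$, being an eigenvector of the commutative Bethe algebra on the $N!$-dimensional space $\mc V_{\bs 1}(\bs a^0,\bs b^0)$, has nonzero image under the projection onto the $\bar{\mc B}_N$-cyclic subspace $\mc V_{\bs 1}^{S^L\times S^R}(\bs a^0,\bs b^0)$.
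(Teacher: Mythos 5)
Your proposal is correct and follows essentially the same route as the paper's proof: composing the bijections $E_N\to C_N$ (Corollary \ref{C cor}), $C_N\to\{\text{central characters}\}$ (Corollary \ref{O=C}), and central characters $\to R_N$ (Theorem \ref{irreps}), together with the identification of $\V_{\bs 1}^{S^R}$ with $H_Ne$ from Lemma \ref{He,eH,EHE}. You merely spell out in more detail the verification that $M_v\cong H_Ne\otimes_{\mc Z_N}\chi_v$ (which is in fact slightly easier than you anticipate, since $v$ is already $S^L$-invariant and hence fixed by $e$), a step the paper leaves implicit.
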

\begin{proof}
By Theorem \ref{irreps} the irreducible representations of $H_N$ are
determined by the central characters $\chi :\mc Z_N\to \C$ and have
the form $H_Ne\otimes_{\mc Z_N}\chi$. Recall that $\V_{\bs
1}^{S^R}$ is identified with $H_Ne$, see Lemma \ref{He,eH,EHE}.
By Corollary \ref{O=C}, the central characters of $H_N$ are in a
bijective correspondence with the points of the Calogero-Moser space
$C_N$ and by Corollary \ref{C cor} the points of the Calogero-Moser
space are in a bijective correspondence with the set $E_N$. The
corollary follows.
\end{proof}

\subsection{Example $N=2$}
The algebra $\bar B_2\simeq {\mc O}_2\simeq {\mc Z}_2$ can
be described by generators and relations as follows
\be
\C[g_1,g_2,h_1,h_2,T]/(T^2-h_1g_1T+(g_1^2-2g_2)h_2+(h_1^2-2h_2)g_2-1).
\ee
It is a free module of rank 2 over
the subalgebra $\C[g_1,g_2,h_1,h_2]$ generated by $1$ and $T$.
We describe the corresponding universal polynomials and
generators for all three algebras.

\medskip

The universal central polynomial has the form
\bea
\mc P^{\mc Z}= \hspace{14cm}\\ (1-(v-x_1)(u-y_1)-(v-x_2)(u-y_2)+
(v-x_1)(v-x_2)(u-y_1)(u-y_2))-s_{12}=
\\
v^2u^2-(y_1+y_2)v^2u-(x_1+x_2)vu^2+y_1y_2v^2+x_1x_2u^2+
((x_1+x_2)(y_1+y_2)-2)vu-
\\
((x_1+x_2)y_1y_2-(y_1+y_2))v-((x_1x_2(y_1+y_2)-(x_1+x_2))u+\\ 1+
x_1x_2y_1y_2-x_1y_1-x_2y_2-s_{12}.
\eea
In particular the generators $g_1,g_2,h_1,h_2,T$ of the
center $\mc Z_2$ of $H_2$ are given by
\be
x_1+x_2,\quad y_1+y_2,\quad x_1x_2,\quad y_1y_2,\quad x_1y_1+x_2y_2-s_{12}.
\ee

\medskip

The Calogero-Moser universal polynomial has the form
\bea
\mc P^{C}=\det((v-\La)(u-Z)-1)= \hspace{8cm}\\
u^2v^2-\tr(Z)v^2u-\tr(\La) vu^2+
\det(Z)v^2+\det(\La)u^2+
(\tr(\La)\tr(Z)vu +\\(\det(\La)\tr(Z)-\tr(\La))v+
(\det(Z)\tr(\La)-\tr(Z))u+1+\det(\La Z)-\tr(\La Z).
\eea
The generators $g_1,g_2,h_1,h_2,T$ of the
algebra $\mc O_2$ of the regular functions on
$C_2$ are given by
\be
\tr(\La),\quad \tr(Z),\quad \det(\La),\quad \det(Z),\quad \tr(\La Z).
\ee
Note that $\det(\La)=((\tr(\La))^2-\tr(\La^2))/2$,
$\det(Z)=((\tr(Z))^2-\tr(Z^2))/2$.

\medskip

The universal differential operator of $B_2$ has the form:
\bea
\mc D^{\mc B}=\partial^2-(\la_1+\la_2+ e_{11}(u)+e_{22}(u))\partial +\\
(\la_1+e_{11}(u))(\la_2+e_{22}(u))-e_{21}(u)e_{12}(u)-(e_{22}(u))'.
\eea
The universal Bethe polynomial has the form
\bea
\mc P^{\bar{\mc B}}=(u-z_1)(u-z_2)v^2-
((\la_1+\la_2)(u-z_1)(u-z_2)+2u-z_1-z_2)v+\\
1+\la_1\la_2z_1z_2-
\left(\begin{matrix}
\la_1z_1+\la_2z_2 & -1\\
-1 & \la_1z_2+\la_2z_1
\end{matrix}\right).
\eea
Here we used the basis $\{\ep_{id}=\ep_1\otimes \ep_2, \ep_{s_{12}}=
\ep_2\otimes \ep_1\}$. The space $\V_{\bs 1}$ is a free $\C[\bs z,\bs
\la]$-module of rank $2$ with generators $\{\ep_{id}, \ep_{s_{12}}\}$,
the action of $\bar{\mc B}_2$ commutes with multiplication by elements
of $\C[\bs z,\bs \la]$.

The generators $g_1,g_2,h_1,h_2,T$ of the
image $\bar{\mc B}_2$ of the Bethe algebra ${\mc B}_2$ are given by
\be
\la_1+\la_2,\quad z_1+z_2,\quad \la_1\la_2, \quad z_1z_2, \quad
\left(\begin{matrix}
\la_1z_1+\la_2z_2 & -1\\
-1 & \la_1z_2+\la_2z_1
\end{matrix}\right).
\ee

\bigskip

\address{EM: {\it Department of Mathematical Sciences,
Indiana University - Purdue University,
Indianapolis, 402 North Blackford St, Indianapolis,
IN 46202-3216, USA}}

\medskip

\address{VT: {\it Department of Mathematical Sciences,
Indiana University - Purdue University,
Indianapolis, 402 North Blackford St, Indianapolis,
IN 46202-3216, USA \and St.\,Petersburg Branch of Steklov Mathematical
Institute Fontanka 27, St.\,Petersburg, 191023, Russia}}

\medskip

\address{AV: {\it Department of Mathematics, University of
North Carolina at Chapel Hill, Chapel Hill, NC
27599-3250, USA}}

\end{document}